\documentclass[10pt, a4paper]{amsart}
\usepackage{amscd,amsmath,amssymb,amsfonts,amsthm,ascmac}
\usepackage{ytableau,enumerate,mathrsfs,stmaryrd,latexsym} 
\usepackage[all]{xy}
\usepackage[top=27truemm,bottom=21truemm,left=26truemm,right=26truemm]{geometry}
\usepackage{color,graphicx}
\def\r{\mathbb{R}}
\def\c{\mathbb{C}}

\def\q{\mathbb{Q}}

\def\z{\mathbb{Z}}

\def\a{\alpha}

\newtheorem{thm}{Theorem}[section]
\newtheorem{defi}[thm]{Definition}

\newtheorem{rem}[thm]{Remark}
\newtheorem{prop}[thm]{Proposition}
\newtheorem{ex}[thm]{Example}

\newtheorem{cor}[thm]{Corollary}
\newtheorem{lem}[thm]{Lemma}

\newtheorem*{theorem 1}{Theorem 1}
\newtheorem*{theorem}{Theorem}

\newtheorem*{corollary}{Corollary}
\title[NEWTON-OKOUNKOV CONVEX BODIES AND POLYHEDRAL REALIZATIONS OF CRYSTAL BASES]{\fontsize{11pt}{11pt}\selectfont NEWTON-OKOUNKOV CONVEX BODIES OF SCHUBERT VARIETIES AND POLYHEDRAL REALIZATIONS OF CRYSTAL BASES}
\date{}

\author[N. Fujita]{\fontsize{10pt}{10pt}\selectfont Naoki Fujita}
\address[N. Fujita]{Department of Mathematics, Tokyo Institute of Technology, 2-12-1 Oh-okayama, Meguro-ku, Tokyo 152-8551, Japan}
\email{fujita.n.ac@m.titech.ac.jp}

\author[S. Naito]{\fontsize{10pt}{10pt}\selectfont Satoshi Naito}
\address[S. Naito]{Department of Mathematics, Tokyo Institute of Technology, 2-12-1 Oh-okayama, Meguro-ku, Tokyo 152-8551, Japan}
\email{naito@math.titech.ac.jp}

\begin{document} 
\subjclass[2010]{Primary 17B37; Secondary 05E10, 14M15, 14M25}
\keywords{Newton-Okounkov bodies, Schubert varieties, Nakashima-Zelevinsky polyhedral realizations, Kashiwara's involution}
\begin{abstract}
A Newton-Okounkov convex body is a convex body constructed from a projective variety with a valuation on its homogeneous coordinate ring; this is deeply connected with representation theory. For instance, the Littelmann string polytopes and the Feigin-Fourier-Littelmann-Vinberg polytopes are examples of Newton-Okounkov convex bodies. In this paper, we prove that the Newton-Okounkov convex body of a Schubert variety with respect to a specific valuation is identical to the Nakashima-Zelevinsky polyhedral realization of a Demazure crystal. As an application of this result, we show that Kashiwara's involution ($\ast$-operation) corresponds to a change of valuations on the rational function field.
\end{abstract}

\maketitle
\ytableausetup{smalltableaux}
\setcounter{tocdepth}{2}
\tableofcontents
\section{Introduction}
\indent
The study of Newton-Okounkov convex bodies is started by Okounkov in order to study multiplicity functions for representations of a reductive group (\cite{O1}, \cite{O2}), and afterward developed independently by Kaveh-Khovanskii (\cite{KK1}) and by Lazarsfeld-Mustata (\cite{LM}). These convex bodies generalize Newton polytopes for toric varieties to arbitrary projective varieties, and have various kinds of information about corresponding projective varieties; for instance, we can systematically construct a series of toric degenerations (see \cite[Corollary 3.14]{HK} and \cite[Theorem 1]{A}). Hence it is important to describe the explicit form of a Newton-Okounkov convex body. As an application of our main result below, we obtain a series of examples, in which a Newton-Okounkov convex body is given by a system of explicit affine inequalities.

A remarkable fact is that the theory of Newton-Okounkov convex bodies of Schubert varieties and Bott-Samelson varieties is deeply connected with representation theory. For instance, Kaveh (\cite{Kav}) proved that the Newton-Okounkov convex body of a Schubert variety with respect to a specific valuation is identical to the Littelmann string polytope constructed from the string parameterization for a Demazure crystal. Furthermore, one of the authors (\cite{F}) extended this result of Kaveh to Bott-Samelson varieties. Also, the Feigin-Fourier-Littelmann-Vinberg polytopes are examples of Newton-Okounkov convex bodies (see \cite{FFL} and \cite{Kir}). The purpose of this paper is to realize the Newton-Okounkov convex body of a Schubert variety with respect to a specific valuation as the Nakashima-Zelevinsky polyhedral realization of a Demazure crystal.

To be more precise, let $G$ be a connected, simply-connected semisimple algebraic group over $\c$, $\mathfrak{g}$ its Lie algebra, $W$ the Weyl group, and $E_i, F_i, h_i \in \mathfrak{g}$, $i \in I$, the Chevalley generators, where $I$ denotes an index set for the vertices of the Dynkin diagram. Choose a Borel subgroup $B \subset G$, and denote by $X(w) \subset G/B$ the Schubert variety corresponding to $w \in W$. A dominant integral weight $\lambda$ gives a line bundle $\mathcal{L}_\lambda$ on $G/B$; by restricting this bundle, we obtain a line bundle on $X(w)$, which we denote by the same symbol $\mathcal{L}_\lambda$. From the Borel-Weil theorem, we know that the space $H^0(X(w), \mathcal{L}_\lambda)$ of global sections is a $B$-module isomorphic to the dual module $V_w(\lambda)^\ast$; here, $V_w(\lambda)$ denotes the Demazure module corresponding to $w$. We take a valuation $v$ on the function field $\c(X(w))$ with values in $\z^r$, where $r := \ell(w)$, the length of $w$, and fix a nonzero section $\tau \in H^0(X(w), \mathcal{L}_\lambda)$; in addition, we assume that $v$ has one-dimensional leaves (see \S\S 3.1 for the definition). From these data, we construct a semigroup $S(X(w), \mathcal{L}_\lambda, v, \tau) \subset \z_{>0} \times \z^r$ (see Definition \ref{Newton-Okounkov convex body}). If we denote by $C(X(w), \mathcal{L}_\lambda, v, \tau) \subset \r_{\ge 0} \times \r^r$ the smallest real closed cone containing $S(X(w), \mathcal{L}_\lambda, v, \tau)$, then the Newton-Okounkov convex body $\Delta(X(w), \mathcal{L}_\lambda, v, \tau) \subset \r^r$ is defined to be the intersection of $C(X(w), \mathcal{L}_\lambda, v, \tau)$ and $\{1\} \times \r^r$. Now we restrict ourselves to a specific valuation. Let us take a reduced word ${\bf i} = (i_r, \ldots, i_1)$ for $w$. By using the birational morphism \[\c^r \rightarrow X(w),\ (t_r, \ldots, t_1) \mapsto \exp(t_r F_{i_r}) \exp(t_{r-1} F_{i_{r-1}}) \cdots \exp(t_1 F_{i_1}) \bmod B,\] we identify the function field $\c(X(w))$ with the rational function field $\c(t_r, \ldots, t_1)$. Define a valuation $v_{\bf i}$ on $\c(X(w))$ to be the highest term valuation on $\c(t_r, \ldots, t_1)$ with respect to the lexicographic order $t_1 > \cdots > t_r$ (see Example \ref{highest term valuation}). For a dominant integral weight $\lambda$, let $\tau_\lambda \in H^0(G/B, \mathcal{L}_\lambda)$ denote the lowest weight vector; by restricting this section, we obtain a section in $H^0(X(w), \mathcal{L}_\lambda)$, which we denote by the same symbol $\tau_\lambda$. In this setting, we study the Newton-Okounkov convex body $\Delta(X(w), \mathcal{L}_\lambda, v_{\bf i}, \tau_\lambda)$.

Let $U_q(\mathfrak{g})$ be the quantized enveloping algebra, and $\mathcal{B}(\infty)$ the crystal basis of the negative part $U_q(\mathfrak{u}^-)$. Denote by $\mathcal{B}(\lambda)$ the crystal basis of the irreducible highest weight $U_q(\mathfrak{g})$-module $V_q(\lambda)$ with highest weight $\lambda$, and by $\mathcal{B}_w(\lambda) \subset \mathcal{B}(\lambda)$ the Demazure crystal corresponding to $w \in W$. In the theory of crystal bases, it is important to give their concrete realizations. Until now, many useful realizations have been discovered; the theory of Nakashima-Zelevinsky polyhedral realizations is one of them. Take an infinite sequence of indices $\tilde{\bf i} = (\ldots, i_k, \ldots, i_2, i_1)$ such that $i_k \neq i_{k+1}$ for all $k \ge 1$, and such that the cardinality of $\{k \ge 1 \mid i_k = i\}$ is $\infty$ for each $i \in I$. Then, we can associate to $\tilde{\bf i}$ a crystal structure on $\z^{\infty} := \{(\ldots, a_k, \ldots, a_2, a_1) \mid a_k \in \z\ {\rm and}\ a_k = 0\ {\rm for}\ k \gg 0\}$, and obtain an embedding of crystals $\Psi_{\tilde{\bf i}}: \mathcal{B}(\infty) \hookrightarrow \z^\infty$, called the Kashiwara embedding with respect to $\tilde{\bf i}$ (see \S\S 2.3). Nakashima-Zelevinsky (\cite{NZ}) described explicitly the image of $\mathcal{B}(\infty)$ under a certain positivity assumption on $\tilde{\bf i}$. Afterward, Nakashima (\cite{N1}, \cite{N2}) gave a similar description of the Demazure crystal $\mathcal{B}_w(\lambda)$ under the assumption that $(\tilde{\bf i}, \lambda)$ is ample (see Definition \ref{definition of ample}). These descriptions of crystal bases are called polyhedral realizations. Let ${\bf i} = (i_r, \ldots, i_1)$ be a reduced word for $w \in W$, and extend it to a reduced word $\hat{\bf i} = (i_N, \ldots, i_{r+1}, i_r, \ldots, i_1)$ for the longest element $w_0 \in W$. We consider the Kashiwara embedding $\Psi_{\tilde{\bf i}}$ with respect to an extension $\tilde{\bf i} = (\ldots, i_k, \ldots, i_{N+1}, i_N, \ldots, i_2, i_1)$ of $\hat{\bf i}$. In this case, the image of $\mathcal{B}(\infty)$ (resp., $\mathcal{B}_w(\lambda)$) can be regarded as a subset of $\z^N$ (resp., $\z^r$) (see Remark \ref{definition1} and Definition \ref{definition2}); we denote by $\Psi_{\bf i}: \mathcal{B}(\infty) \hookrightarrow \z^N$ and $\Psi_{\bf i} ^{(\lambda, w)}: \mathcal{B}_w(\lambda) \hookrightarrow \z^r$ the corresponding embeddings. From the embedding $\Psi_{\bf i} ^{(\lambda, w)}$, we obtain a subset $\mathcal{S}_{\bf i} ^{(\lambda, w)} \subset \z_{>0} \times \z^r$ (see Definition \ref{definition3}). Let us denote by $\mathcal{C}_{\bf i} ^{(\lambda, w)} \subset \r_{\ge 0} \times \r^r$ the smallest real closed cone containing $\mathcal{S}_{\bf i} ^{(\lambda, w)}$, and by $\Delta_{\bf i} ^{(\lambda, w)} \subset \r^r$ the intersection of $\mathcal{C}_{\bf i} ^{(\lambda, w)}$ and $\{1\} \times \r^r$. By using the theory of generalized string polytopes (see \cite{F} for the definition), we deduce that $\Delta_{\bf i} ^{(\lambda, w)} \cap \z^r = \Psi_{\bf i} ^{(\lambda, w)}(\mathcal{B}_w(\lambda))$ (see Corollary \ref{a finite union of polytopes}); here, we do not necessarily assume that $(\tilde{\bf i}, \lambda)$ is ample.

In order to relate the Newton-Okounkov convex body $\Delta(X(w), \mathcal{L}_\lambda, v_{\bf i}, \tau_\lambda)$ with the polyhedral realization $\Delta_{\bf i} ^{(\lambda, w)}$, we make use of global crystal bases. Let us denote by $\{G^{\rm low} _\lambda(b) \mid b \in \mathcal{B}_w(\lambda)\} \subset V_w(\lambda)$ the specialization of the lower global basis at $q = 1$, and by $\{G^{\rm up} _{\lambda, w}(b) \mid b \in \mathcal{B}_w(\lambda)\} \subset H^0(X(w), \mathcal{L}_\lambda) = V_w(\lambda)^\ast$ the dual basis (see \S\S 4.1). The following is the main result of this paper.

\vspace{2mm}\begin{theorem}
Let ${\bf i}$ be a reduced word for $w \in W$, and $\lambda$ a dominant integral weight.
\begin{enumerate}
\item[{\rm (1)}] The Kashiwara embedding $\Psi_{\bf i} ^{(\lambda, w)}(b)$ is equal to $-v_{\bf i}(G^{\rm up} _{\lambda, w}(b)/\tau_\lambda)$ for all $b \in \mathcal{B}_w(\lambda)$.
\item[{\rm (2)}] The polyhedral realization $\Delta_{\bf i} ^{(\lambda, w)}$ is identical to $-\Delta(X(w), \mathcal{L}_\lambda, v_{\bf i}, \tau_\lambda)$.
\end{enumerate}
\end{theorem}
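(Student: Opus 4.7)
The plan is to prove (1) by an explicit computation of $G^{\rm up}_{\lambda, w}(b)/\tau_\lambda$ on the birational chart and to match the result with the Kashiwara embedding via the duality between the lower and upper global bases; part (2) will then follow from (1) by a standard semigroup argument exploiting the one-dimensional-leaves property.

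For (1), the first step is to express $f_b := G^{\rm up}_{\lambda, w}(b)/\tau_\lambda$ as a polynomial on the chart. Using the identification $H^0(X(w), \mathcal{L}_\lambda) = V_w(\lambda)^\ast$ together with the fact that $\tau_\lambda(\exp(t F_i)v_\lambda) = \tau_\lambda(v_\lambda) = 1$ for a highest weight vector $v_\lambda$, I obtain
\[
f_b(t_r, \ldots, t_1) = \sum_{{\bf n} \in \z_{\ge 0}^r} t_r^{n_r} \cdots t_1^{n_1}\,\bigl\langle G^{\rm up}_{\lambda, w}(b),\, F_{i_r}^{(n_r)} \cdots F_{i_1}^{(n_1)} v_\lambda\bigr\rangle,
\]
where $F_i^{(n)}$ denotes the divided power. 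By the duality between the lower and upper global bases at $q = 1$, the coefficient in front of $t_r^{n_r} \cdots t_1^{n_1}$ equals the multiplicity of $G^{\rm low}_\lambda(b)$ in $F_{i_r}^{(n_r)} \cdots F_{i_1}^{(n_1)} v_\lambda$. The valuation $v_{\bf i}$ selects the exponent ${\bf n}^\ast$ whose monomial is leading in the lex order $t_1 > \cdots > t_r$: $n_1^\ast$ is the largest with some coefficient nonzero, $n_2^\ast$ the largest subject to $n_1 = n_1^\ast$, and so on. The core of the proof is then to identify ${\bf n}^\ast$ with $\Psi_{\bf i}^{(\lambda, w)}(b)$; I would argue by induction on $r$, peeling off $\exp(t_1 F_{i_1})$ from the right, so that the maximal $n_1$ with coefficient nonzero is controlled at the crystal level by $\tilde{f}_{i_1}$ acting on $G^{\rm low}_\lambda(b)$, matching the first coordinate of $\Psi_{\tilde{\bf i}}$ via the $\ast$-crystal structure on $\mathcal{B}(\infty)$. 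Iterating this, together with the Nakashima--Zelevinsky description of $\Psi_{\tilde{\bf i}}$ on an extension $\hat{\bf i}$ of ${\bf i}$ to a reduced word for $w_0$ and the compatibility with the Demazure restriction, completes the identification.

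Given (1), part (2) follows from a cone argument. For each $k \ge 1$, the set $\{G^{\rm up}_{k\lambda, w}(b) : b \in \mathcal{B}_w(k\lambda)\}$ is a basis of $H^0(X(w), \mathcal{L}_{k\lambda})$, and $\tau_{k\lambda}$ agrees with $\tau_\lambda^k$ up to a nonzero scalar. Applying (1) at weight $k\lambda$ yields pairwise distinct values $-v_{\bf i}(G^{\rm up}_{k\lambda, w}(b)/\tau_\lambda^k) = \Psi_{\bf i}^{(k\lambda, w)}(b)$; since $v_{\bf i}$ has one-dimensional leaves, these basis-valuations exhaust all valuations of nonzero sections, so $S(X(w), \mathcal{L}_\lambda, v_{\bf i}, \tau_\lambda) = -\mathcal{S}_{\bf i}^{(\lambda, w)}$. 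Passing to closed cones and slicing at the first coordinate $1$ then yields $\Delta(X(w), \mathcal{L}_\lambda, v_{\bf i}, \tau_\lambda) = -\Delta_{\bf i}^{(\lambda, w)}$.

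The main obstacle is the inductive matching in part (1), where three different orderings must be reconciled: the order in which the factors $\exp(t_k F_{i_k})$ are composed in the birational map, the lex order defining $v_{\bf i}$, and the indexing of the Kashiwara embedding encoded in the $\ast$-crystal structure on $\mathcal{B}(\infty)$ associated with the infinite extension $\tilde{\bf i}$. I expect the delicate bookkeeping here to rest on the precise behavior of $\tilde{f}_i$ against divided-power products of the $F_j$'s on the lower global basis, together with the restriction properties of Kashiwara's $\ast$-operation to Demazure crystals.
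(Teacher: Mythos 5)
Your proposal follows essentially the same route as the paper's proof of Theorem \ref{main thm} and Corollary \ref{corollary1}: expand the section on the birational chart as a generating function in the $t_k$'s, identify the leading exponent inductively via divided-power Chevalley actions, match the result with the Kashiwara embedding through the $\ast$-crystal structure, and then deduce part (2) from distinctness of the valuation values together with a general valuation fact. The paper packages the generating-function step as a Hopf algebra isomorphism $\Upsilon: U(\mathfrak{u}^-)_{\rm gr}^\ast \to \c[U^-]$ and first proves the $w = w_0$, $\mathcal{B}(\infty)$-level case (Lemmas \ref{lemma1}--\ref{lemma3}, Propositions \ref{valuation Chevalley} and \ref{Verma case}), then specializes to $\mathcal{B}(\lambda)$ and restricts to $X(w)$; your power-series formula for $f_b$ is precisely the explicit form of $\Upsilon \circ \pi_\lambda^\ast$, and your proposal to work directly with $G^{\rm up}_{\lambda,w}(b)$ at level $\lambda$ is only a cosmetic reorganization of the same reduction.

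Two points of imprecision in the sketch deserve flagging, since they sit at exactly the delicate step you identify. First, you say the maximal $n_1$ is ``controlled by $\tilde f_{i_1}$ acting on $G^{\rm low}_\lambda(b)$.'' The degree in $t_1$ of $f_b$ is governed by \emph{right} multiplication $G^{\rm low}(b') \cdot F_{i_1}^{(k)}$, and by Proposition \ref{properties of global bases}(4) this is controlled by the $\ast$-twisted operators and the statistic $\varepsilon_{i_1}^\ast$, not by $\tilde f_{i_1}$ itself; the paper isolates this as Lemma \ref{lemma3}, obtaining $\deg_{t_1} f_b = \varepsilon_{i_1}^\ast(b)$ and, after taking the top derivative, $G^{\rm up}(b) \cdot F_{i_1}^{(\varepsilon_{i_1}^\ast(b))} = (-1)^{\varepsilon_{i_1}^\ast(b)} G^{\rm up}((\tilde e_{i_1}^\ast)^{\varepsilon_{i_1}^\ast(b)} b)$, which is exactly what feeds the induction by Proposition \ref{star string}(2). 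You invoke the $\ast$-crystal in the same sentence, so you are aware of the distinction, but ``$\tilde f_{i_1}$ on $G^{\rm low}_\lambda(b)$'' would point the induction at the string parameterization of $b$ rather than of $b^\ast$, i.e., at the wrong coordinate system. Second, in part (2) the exhaustion of valuations is not a consequence of the one-dimensional-leaves hypothesis: once part (1) shows that the basis $\{G^{\rm up}_{k\lambda,w}(b)\}_{b}$ has pairwise distinct valuation values, Proposition \ref{prop1,val}(2) alone gives that every nonzero section has valuation equal to one of them, which is what the paper's proof of Corollary \ref{corollary1} actually uses.
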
\vspace{2mm}

\S\S 5.1 is devoted to the study of explicit forms of Newton-Okounkov convex bodies. To be more precise, under the assumption that $(\tilde{\bf i}, \lambda)$ is ample, Nakashima's description of $\Psi_{\bf i} ^{(\lambda, w)}(\mathcal{B}_w(\lambda))$ also gives a system of explicit affine inequalities defining the Newton-Okounkov convex body $\Delta_{\bf i} ^{(\lambda, w)} = -\Delta(X(w), \mathcal{L}_\lambda, v_{\bf i}, \tau_\lambda)$. 

In \S\S 5.2, we compare our result with the result of \cite{Kav}. To be more precise, let us define a valuation $\tilde{v}_{\bf i}$ on $\c(X(w))$ to be the highest term valuation on $\c(t_r, \ldots, t_1)$ with respect to the lexicographic order $t_r \succ \cdots \succ t_1$ (see Example \ref{highest term valuation}). Kaveh (\cite{Kav}) proved that the value $-\tilde{v}_{\bf i}(G^{\rm up} _{\lambda, w}(b)/\tau_\lambda)$ for $b \in \mathcal{B}_w(\lambda)$ is equal to the string parameterization of $b$ with respect to ${\bf i}$, and that the Newton-Okounkov convex body $-\Delta(X(w), \mathcal{L}_\lambda, \tilde{v}_{\bf i}, \tau_\lambda)$ is identical to the Littelmann string polytope $\widetilde{\Delta}_{\bf i} ^{(\lambda, w)}$. Let us consider the case that $w = w_0$. In this case, the Schubert variety $X(w_0)$ is just the full flag variety $G/B$, and the Demazure crystal $\mathcal{B}_{w_0}(\lambda)$ is just the crystal basis $\mathcal{B}(\lambda)$. We denote the section $G^{\rm up} _{\lambda, w_0}(b) \in H^0(G/B, \mathcal{L}_\lambda)$ for $b \in \mathcal{B}(\lambda)$ simply by $G^{\rm up} _{\lambda}(b)$. For a reduced word ${\bf i} = (i_N, \ldots, i_1)$ for $w_0$, our result combined with the result of Kaveh above implies that Kashiwara's involution $\ast: \mathcal{B}(\infty) \rightarrow \mathcal{B}(\infty)$ corresponds to the change of valuations from $v_{\bf i}$ to $\tilde{v}_{{\bf i}^{\rm op}}$, which gives a geometric interpretation of $\ast$; here we write ${\bf i}^{\rm op} := (i_1, \ldots, i_N)$. More precisely, we obtain the following.

\vspace{2mm}\begin{corollary}
Let ${\bf i}$ be a reduced word for $w_0$, and $N := \ell(w_0)$. Then, there exists a piecewise-linear map $\eta_{\bf i}: \r^N \rightarrow \r^N$ satisfying the following conditions$:$
\begin{enumerate}
\item[{\rm (i)}] The map $\eta_{\bf i}$ corresponds to Kashiwara's involution $\ast$ through the Kashiwara embedding $\Psi_{\bf i}$$:$ \[b^\ast = \Psi_{\bf i} ^{-1} \circ \eta_{\bf i} \circ \Psi_{\bf i}(b)\] for all $b \in \mathcal{B}(\infty)$.
\item[{\rm (ii)}] The map $\eta_{\bf i}$ corresponds to the change of valuations from $v_{\bf i}$ to $\tilde{v}_{{\bf i}^{\rm op}}$$:$ \[\eta_{\bf i}(-v_{\bf i}(G^{\rm up} _{\lambda}(b)/\tau_{\lambda})) = -\tilde{v}_{{\bf i}^{\rm op}}(G^{\rm up} _{\lambda}(b)/\tau_{\lambda})\] for every dominant integral weight $\lambda$ and $b \in \mathcal{B}(\lambda)$.
\item[{\rm (iii)}] The map $\eta_{\bf i}$ induces a bijective piecewise-linear map from the polyhedral realization $\Delta_{\bf i} ^{(\lambda, w_0)} = -\Delta(G/B, \mathcal{L}_\lambda, v_{\bf i}, \tau_{\lambda})$ onto the string polytope $\widetilde{\Delta}_{{\bf i}^{\rm op}} ^{(\lambda, w_0)} = -\Delta(G/B, \mathcal{L}_\lambda, \tilde{v}_{{\bf i}^{\rm op}}, \tau_{\lambda})$ for every dominant integral weight $\lambda$.
\end{enumerate}
\end{corollary}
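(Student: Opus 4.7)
The plan is to deduce the corollary by combining the main theorem (applied to $w = w_0$, so that $X(w_0) = G/B$ and $\mathcal{B}_{w_0}(\lambda) = \mathcal{B}(\lambda)$) with the theorem of Kaveh recalled in the paragraph preceding the corollary. By the main theorem (1), $\Psi_{\bf i}^{(\lambda, w_0)}(b) = -v_{\bf i}(G^{\rm up}_\lambda(b)/\tau_\lambda)$ for every $b \in \mathcal{B}(\lambda)$, while Kaveh's theorem asserts that $-\tilde{v}_{{\bf i}^{\rm op}}(G^{\rm up}_\lambda(b)/\tau_\lambda)$ equals the string parameterization of $b$ with respect to ${\bf i}^{\rm op}$, which I denote by $\Phi_{{\bf i}^{\rm op}}(b)$. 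Thus the corollary reduces to producing a piecewise-linear self-map $\eta_{\bf i}$ of $\r^N$ that, at the lattice level, converts Kashiwara parameterizations with respect to ${\bf i}$ into string parameterizations with respect to ${\bf i}^{\rm op}$, and that implements $\ast$ through $\Psi_{\bf i}$.

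The key input is the reordering identity $\Psi_{\bf i}(b^\ast) = \Phi_{{\bf i}^{\rm op}}(b)$ for all $b \in \mathcal{B}(\infty)$, together with its restriction to $\mathcal{B}(\lambda)$ via the standard embedding $\mathcal{B}(\lambda) \hookrightarrow \mathcal{B}(\infty) \otimes T_{\lambda}$. This identity is built into Kashiwara's construction of $\Psi_{\tilde{\bf i}}$: the coordinates of $\Psi_{\bf i}(b)$ are extracted iteratively using the starred operators $\tilde{e}_i^\ast := \ast \circ \tilde{e}_i \circ \ast$, so conjugating each $\tilde{e}_i^\ast$ by $\ast$ replaces it with $\tilde{e}_i$ acting on $b^\ast$; the order in which successive operators are composed is simultaneously reversed, so ${\bf i}$ is read as ${\bf i}^{\rm op}$, and the resulting iterative construction is precisely the one defining the string parameterization. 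I would then define $\eta_{\bf i}$ by the same explicit piecewise-linear recipes that compute $\Phi_{{\bf i}^{\rm op}}$ and $\Psi_{\bf i}$ in the coordinates of $\z^N$, initially on the relevant lattice images and then by extension to all of $\r^N$.

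Granting the reordering identity, property (i) is immediate from $\Psi_{\bf i}(b^\ast) = \eta_{\bf i}(\Psi_{\bf i}(b))$, and property (ii) becomes
\[
\eta_{\bf i}\bigl(-v_{\bf i}(G^{\rm up}_\lambda(b)/\tau_\lambda)\bigr) = \eta_{\bf i}\bigl(\Psi_{\bf i}^{(\lambda, w_0)}(b)\bigr) = \Phi_{{\bf i}^{\rm op}}(b) = -\tilde{v}_{{\bf i}^{\rm op}}(G^{\rm up}_\lambda(b)/\tau_\lambda),
\]
combining the main theorem, the defining identity for $\eta_{\bf i}$, and Kaveh's result. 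Property (iii) then follows from (ii) together with Corollary~\ref{a finite union of polytopes} (for $w = w_0$) and Kaveh's analogous statement for the string polytope, both of which identify the relevant Newton-Okounkov convex body as the rational convex hull of the (rescaled) lattice image of $\mathcal{B}(\lambda)$ under the respective parameterization; hence the lattice-level bijection given by $\eta_{\bf i}$ extends to a piecewise-linear bijection of the convex bodies.

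The main obstacle is the reordering identity $\Psi_{\bf i}(b^\ast) = \Phi_{{\bf i}^{\rm op}}(b)$ itself: conceptually it is merely the compatibility of $\ast$ with the iterative construction of the Kashiwara embedding, but executing it cleanly requires careful bookkeeping of the indexing conventions for $\Psi_{\tilde{\bf i}}$ set up in Section~2 (in particular the direction in which the entries of $\tilde{\bf i}$ are read) and for string parameterizations, as well as a compatibility argument relating the $\mathcal{B}(\infty)$-level statement to its $\mathcal{B}(\lambda)$-level counterpart.
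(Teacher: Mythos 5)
Your approach is essentially the paper's: both proofs combine the main theorem (for $w = w_0$) with Kaveh's result and hinge on the identity $\Phi_{{\bf i}^{\rm op}} = \Psi_{\bf i} \circ \ast$ on $\mathcal{B}(\infty)$. Two remarks, one reassuring and one cautionary.

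First, the reordering identity you flag as ``the main obstacle'' requires no work here: it is exactly Proposition~\ref{star string}~(2), cited from Nakashima--Zelevinsky, and the paper already records the consequence $\Psi_{\bf i}^{(w)} = \Phi_{{\bf i}^{\rm op}} \circ \ast$ at the end of \S\S 2.3. So that step is free of charge.

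Second, defining $\eta_{\bf i}$ on the lattice image by $\eta_{\bf i}(\Psi_{\bf i}(b)) := \Phi_{{\bf i}^{\rm op}}(b)$ and then saying ``by extension to all of $\r^N$'' skips the one part of the statement that genuinely needs an argument: a map defined only on a discrete set has no canonical piecewise-linear extension, so you must exhibit one. The paper does this by an explicit formula. Setting ${\bf a}' := (\ldots, 0, 0, a_N, \ldots, a_1) \in \z^\infty_{\tilde{\bf i}}$ for $(a_1, \ldots, a_N) \in \Psi_{\bf i}(\mathcal{B}(\infty))$ and using that $\Psi_{\tilde{\bf i}}$ is a strict morphism of crystals (so that $\varepsilon_i$ is preserved), one gets
\[
\eta_{\bf i}\bigl((a_1, \ldots, a_N)\bigr)
= \bigl(\sigma^{(i_1)}({\bf a}'),\ \sigma^{(i_2)}(\tilde{e}_{i_1}^{\rm max}{\bf a}'),\ \ldots,\ \sigma^{(i_N)}(\tilde{e}_{i_{N-1}}^{\rm max}\cdots\tilde{e}_{i_1}^{\rm max}{\bf a}')\bigr),
\]
where $\sigma^{(i)}$ and $\tilde{e}_i^{\rm max}$ are the manifestly piecewise-linear expressions built into the crystal $\z^\infty_{\tilde{\bf i}}$. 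This formula makes literal sense on all of the string cone $\mathcal{C}_{\bf i}$, and the extension there is unique because $\mathcal{C}_{\bf i} \cap \z^N = \Psi_{\bf i}(\mathcal{B}(\infty))$. Your phrase ``the same explicit piecewise-linear recipes'' gestures at this, but you should actually write the formula down, since that is where the piecewise-linearity assertion is established. With that in place, your deductions of (i), (ii), and (iii) from the main theorem, Kaveh's theorem, and Corollary~\ref{a finite union of polytopes} are correct and in the same order as the paper's.
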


\section{Polyhedral realizations of crystal bases}
\subsection{Abstract crystals}

First of all, we recall the definition of abstract crystals introduced in \cite{Kas4}. Let $G$ be a connected, simply-connected semisimple algebraic group over $\c$ of rank $n$, $\mathfrak{g}$ its Lie algebra, and $W$ the Weyl group. Choose a Borel subgroup $B \subset G$ and a maximal torus $T \subset B$. Denote by $\mathfrak{t}$ the Lie algebra of $T$, by $\mathfrak{t}^\ast := {\rm Hom}_\c (\mathfrak{t}, \c)$ its dual space, and by $\langle \cdot, \cdot \rangle: \mathfrak{t}^\ast \times \mathfrak{t} \rightarrow \c$ the canonical pairing. Let $\{\alpha_i \mid i \in I\} \subset \mathfrak{t}^\ast$ be the set of simple roots, $\{h_i \mid i \in I\} \subset \mathfrak{t}$ the set of simple coroots, and $P \subset \mathfrak{t}^\ast$ the weight lattice, where $I := \{1, 2, \ldots, n\}$, an index set for the vertices of the Dynkin diagram. 

\vspace{2mm}\begin{defi}\normalfont
A {\it crystal} $\mathcal{B}$ is a set equipped with the maps 
\begin{enumerate}
\item[] ${\rm wt}: \mathcal{B} \rightarrow P$,
\item[] $\varepsilon_i: \mathcal{B} \rightarrow \z \cup \{-\infty\}$, $\varphi_i: \mathcal{B} \rightarrow \z \cup \{-\infty\}\ {\rm for}\ i \in I$,\ and
\item[] $\tilde{e}_i: \mathcal{B} \rightarrow \mathcal{B} \cup \{0\}$, $\tilde{f}_i: \mathcal{B} \rightarrow \mathcal{B} \cup \{0\}\ {\rm for}\ i \in I$,
\end{enumerate}
satisfying the following conditions:
\begin{enumerate}
\item[(i)] $\varphi_i(b) = \varepsilon_i(b) + \langle{\rm wt}(b), h_i\rangle$ for $i \in I$,
\item[(ii)] ${\rm wt}(\tilde{e}_i b) = {\rm wt}(b) + \alpha_i$, $\varepsilon_i(\tilde{e}_i b) = \varepsilon_i(b) -1$, and $\varphi_i(\tilde{e}_i b) = \varphi_i(b) +1$ for $i \in I$ and $b \in \mathcal{B}$ such that $\tilde{e}_i b \in \mathcal{B}$,
\item[(iii)] ${\rm wt}(\tilde{f}_i b) = {\rm wt}(b) - \alpha_i$, $\varepsilon_i(\tilde{f}_i b) = \varepsilon_i(b) +1$, and $\varphi_i(\tilde{f}_i b) = \varphi_i(b) -1$ for $i \in I$ and $b \in \mathcal{B}$ such that $\tilde{f}_i b \in \mathcal{B}$,
\item[(iv)] $b^\prime = \tilde{e}_i b$ if and only if $b = \tilde{f}_i b^\prime$ for $i \in I$ and $b, b^\prime \in \mathcal{B}$,
\item[(v)] $\tilde{e}_i b = \tilde{f}_i b = 0$ for $i \in I$ and $b \in \mathcal{B}$ such that $\varphi_i(b) = -\infty$.
\end{enumerate}
Here, $- \infty$ and $0$ are additional elements that are not contained in $\z$ and $\mathcal{B}$, respectively.
\end{defi}\vspace{2mm}

\begin{defi}\normalfont
Let $\mathcal{B}_1, \mathcal{B}_2$ be two crystals. A map \[\psi: \mathcal{B}_1 \cup \{0\} \rightarrow \mathcal{B}_2 \cup \{0\}\] is called a {\it strict morphism} of crystals from $\mathcal{B}_1$ to $\mathcal{B}_2$ if it satisfies the following conditions:
\begin{enumerate}
\item[(i)] $\psi(0) = 0$,
\item[(ii)] ${\rm wt}(\psi(b)) = {\rm wt}(b)$, $\varepsilon_i(\psi(b)) = \varepsilon_i(b)$, and $\varphi_i(\psi(b)) = \varphi_i(b)$ for $i \in I$ and $b \in \mathcal{B}_1$ such that $\psi(b) \in \mathcal{B}_2$,
\item[(iii)] $\tilde{e}_i \psi(b) = \psi(\tilde{e}_i b)$ and $\tilde{f}_i \psi(b) = \psi(\tilde{f}_i b)$ for $i \in I$ and $b \in \mathcal{B}_1$;
\end{enumerate}
here, if $\psi(b) = 0$, then we set $\tilde{e}_i \psi(b) = \tilde{f}_i \psi(b) = 0$. An injective strict morphism is called a {\it strict embedding} of crystals. 
\end{defi}\vspace{2mm}

Consider the total order $<$ on $\z \cup \{-\infty\}$ given by the usual order on $\z$, and by $-\infty < s$ for all $s \in \z$. For two crystals $\mathcal{B}_1, \mathcal{B}_2$, we can define another crystal $\mathcal{B}_1 \otimes \mathcal{B}_2$ (called the tensor product) as follows: 
\begin{align*}
&\mathcal{B}_1 \otimes \mathcal{B}_2 := \{b_1 \otimes b_2 \mid b_1 \in \mathcal{B}_1,\ b_2 \in \mathcal{B}_2\},\\
&{\rm wt}(b_1 \otimes b_2) := {\rm wt}(b_1) + {\rm wt}(b_2),\\
&\varepsilon_i(b_1 \otimes b_2) := \max\{\varepsilon_i(b_1),\ \varepsilon_i(b_2) - \langle{\rm wt}(b_1), h_i\rangle\},\\
&\varphi_i(b_1 \otimes b_2) := \max\{\varphi_i(b_2),\ \varphi_i(b_1) + \langle{\rm wt}(b_2), h_i\rangle\},\\
&\tilde{e}_i(b_1 \otimes b_2) :=
\begin{cases}
\tilde{e}_i b_1 \otimes b_2 &{\rm if}\ \varphi_i (b_1) \ge \varepsilon_i (b_2),\\
b_1 \otimes \tilde{e}_i b_2 &{\rm if}\ \varphi_i (b_1) < \varepsilon_i (b_2),
\end{cases}\\
&\tilde{f}_i(b_1 \otimes b_2) :=
\begin{cases}
\tilde{f}_i b_1 \otimes b_2 &{\rm if}\ \varphi_i (b_1) > \varepsilon_i (b_2),\\
b_1 \otimes \tilde{f}_i b_2 &{\rm if}\ \varphi_i (b_1) \le \varepsilon_i (b_2).
\end{cases}
\end{align*}
Here, $b_1 \otimes b_2$ stands for an ordered pair $(b_1, b_2)$, and we set $b_1 \otimes 0 = 0 \otimes b_2 = 0$.

\vspace{2mm}\begin{ex}\normalfont\label{example of crystals}
For $\lambda \in P$, let $R_\lambda = \{r_\lambda\}$ be the crystal consisting of only one element given by: ${\rm wt}(r_\lambda) = \lambda$, $\varepsilon_i(r_\lambda) = - \langle\lambda, h_i\rangle$, $\varphi_i(r_\lambda) = 0$, and $\tilde{e}_i r_\lambda = \tilde{f}_i r_\lambda = 0$.
\end{ex}

\subsection{Crystal bases and global bases}

Here we recall some basic facts about crystal bases and global bases (following \cite{Kas2}, \cite{Kas3}, and \cite{Kas4}). Define a symmetric bilinear form $(\cdot, \cdot)$ on $\mathfrak{t}^\ast$ by $2(\alpha_j, \alpha_i)/(\alpha_i, \alpha_i) = \langle \alpha_j, h_i \rangle$ for all $i, j \in I$, and by $(\alpha_i, \alpha_i) = 2$ for all short simple roots $\alpha_i$. We set $(c_{i, j})_{i, j \in I} := (\langle \alpha_j, h_i \rangle)_{i, j \in I}$, the Cartan matrix of $\mathfrak{g}$, and also set
\begin{align*}
&q_i := q^{(\a_i, \a_i)/2}\ {\rm for}\ i \in I,\\
&[s]_i := \frac{q_i ^s - q_i ^{-s}}{q_i - q_i ^{-1}}\ {\rm for}\ i \in I,\ s \in \z,\\
&[s]_i ! := [s]_i [s-1]_i \cdots [1]_i\ {\rm for}\ i \in I,\ s \in \z_{\ge 0},\\
&\genfrac{[}{]}{0pt}{}{s}{k}_i := \frac{[s]_i [s-1]_i \cdots [s - k +1]_i}{[k]_i [k-1]_i \cdots [1]_i}\ {\rm for}\ i \in I,\ s, k \in \z_{\ge 0}\ {\rm such\ that}\ k \le s,
\end{align*}
where $[0]_i ! := 1$ and $\genfrac{[}{]}{0pt}{}{s}{0}_i := 1$. 
\vspace{2mm}\begin{defi}\normalfont
For a finite-dimensional semisimple Lie algebra $\mathfrak{g}$, the {\it quantized enveloping algebra} $U_q(\mathfrak{g})$ is the unital associative $\q(q)$-algebra with generators $e_i, f_i, t_i, t_i ^{-1}$, $i \in I$, and relations:
\begin{enumerate}
\item[(i)] $t_i t_i ^{-1}=1$ and $t_i t_j = t_j t_i$ for $i, j \in I$,
\item[(ii)] $t_i e_j t_i ^{-1} = q_i ^{c_{i, j}} e_j$ and $t_i f_j t_i ^{-1} = q_i ^{-c_{i, j}} f_j$ for $i, j \in I$,
\item[(iii)] $e_i f_j - f_j e_i = \delta_{i, j} (t_i - t_i ^{-1})/(q_i - q_i ^{-1})$ for $i, j \in I$,
\item[(iv)] $\sum_{s = 0} ^{1 - c_{i, j}}(-1)^s e_i^{(s)} e_j e_i ^{(1 - c_{i, j} - s)} = \sum_{s = 0} ^{1 - c_{i, j}}(-1)^s f_i^{(s)} f_j f_i ^{(1 - c_{i, j} - s)} = 0$ for $i, j \in I$ such that $i \neq j$.
\end{enumerate}
Here, $\delta_{i, j}$ denotes the Kronecker delta, and $e_i ^{(s)} := e_i ^s/[s]_i !$, $f_i ^{(s)} := f_i ^s/[s]_i !$ for $i \in I$, $s \in \z_{\ge 0}$. 
\end{defi}\vspace{2mm}

Let us denote by $U_q (\mathfrak{u})$ (resp., $U_q (\mathfrak{u}^-)$) the $\q(q)$-subalgebra of $U_q(\mathfrak{g})$ generated by $\{e_i\ |\ i \in I\}$ (resp., $\{f_i\ |\ i \in I\}$). Define a $\q$-algebra involution ${}^-$ and a $\q(q)$-algebra anti-involution $\ast$ on $U_q (\mathfrak{g})$ by
\begin{align*}
\overline{e}_i &= e_i,\ \overline{f}_i = f_i,\ \overline{t_i}=t_i ^{-1},\ \overline{q}=q^{-1},\ {\rm and}\\
e_i ^\ast &= e_i,\ f_i ^\ast = f_i,\ t_i ^\ast = t_i ^{-1};
\end{align*}
the involution ${}^-$ (resp., $\ast$) is called the {\it bar involution} (resp., {\it Kashiwara's involution}). Note that these preserve $U_q (\mathfrak{u})$ and $U_q (\mathfrak{u}^-)$; also, we have $\ast \circ {}^- = {}^- \circ \ast$. Denote by $A \subset \q(q)$ the $\q$-subalgebra of $\q(q)$ consisting of rational functions regular at $q = 0$. Let us take a free $A$-submodule $L(\infty) \subset U_q (\mathfrak{u}^-)$, a $\q$-basis $\mathcal{B}(\infty) \subset L(\infty)/qL(\infty)$, and define operators $\tilde{e}_i, \tilde{f}_i$, $i \in I$, on $U_q (\mathfrak{u}^-)$ as in \cite[\S\S 3.5]{Kas2} (note that we use the notation $\mathcal{B}(\infty)$ instead of $B(\infty)$). The operators $\tilde{e}_i, \tilde{f}_i$, $i \in I$, are called the {\it Kashiwara operators}.

\vspace{2mm}\begin{prop}[{see \cite[Theorem 4]{Kas2}}]
The following hold.
\begin{enumerate}
\item[{\rm (1)}] $\tilde{e}_i L(\infty) \subset L(\infty)$ and $\tilde{f}_i L(\infty) \subset L(\infty)$ for all $i \in I;$ hence $\tilde{e}_i, \tilde{f}_i$, $i \in I$, act on $L(\infty)/qL(\infty)$.
\item[{\rm (2)}] $\tilde{e}_i \mathcal{B}(\infty) \subset \mathcal{B}(\infty) \cup \{0\}$ and $\tilde{f}_i \mathcal{B}(\infty) \subset \mathcal{B}(\infty)$ for all $i \in I$. 
\item[{\rm (3)}] Define maps $\varepsilon_i, \varphi_i: \mathcal{B}(\infty) \rightarrow \z \cup \{-\infty\}$ for $i \in I$ by \[\varepsilon_i(b) := \max\{k \in \z_{\ge 0} \mid \tilde{e}_i ^k b \neq 0\},\ {\it and}\ \varphi_i (b) := \varepsilon_i (b) + \langle{\rm wt}(b), h_i\rangle.\] Then, the maps ${\rm wt}$, $\varepsilon_i$, $\varphi_i$, $\tilde{e}_i$, and $\tilde{f}_i$, $i \in I$, provide a crystal structure on $\mathcal{B}(\infty)$. 
\end{enumerate}
\end{prop}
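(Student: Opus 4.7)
The plan is to follow Kashiwara's grand-loop strategy and prove the three assertions simultaneously by induction on the height of the weight. First I would recall how the pair $(L(\infty),\mathcal{B}(\infty))$ is produced: $L(\infty)$ is the $A$-submodule of $U_q(\mathfrak{u}^-)$ generated by the monomials $\tilde{f}_{i_k}\cdots\tilde{f}_{i_1}\cdot 1$ with $k\ge 0$, and $\mathcal{B}(\infty)$ is the set of images of such monomials in $L(\infty)/qL(\infty)$. The essential algebraic input is Kashiwara's $q$-derivation $e_i'$ on $U_q(\mathfrak{u}^-)$, together with the weight-space decomposition
\[
U_q(\mathfrak{u}^-)_{-\xi}\;=\;\bigoplus_{k\ge 0} f_i^{(k)}\bigl(\ker e_i'\cap U_q(\mathfrak{u}^-)_{-\xi+k\alpha_i}\bigr),
\]
relative to which $\tilde{e}_i$ and $\tilde{f}_i$ shift the exponent $k$ by $\mp 1$. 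The base case $\xi = 0$ is trivial.

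The inductive step is the core of the argument. For a fixed weight $-\xi$, I would establish assertion (1) first by showing, via the decomposition above together with Serre-type identities relating $e_i'$ and $f_j$, that if $u\in L(\infty)_{-\xi+\alpha_i}$ (respectively, $u\in L(\infty)_{-\xi-\alpha_i}$), then $\tilde{f}_i u$ (respectively, $\tilde{e}_i u$) lies in $L(\infty)_{-\xi}$; this step needs the inductive hypothesis at strictly smaller weights in order to push $\tilde{e}_j$ past $\tilde{f}_i$ for $j\ne i$. Assertion (2) then follows from the fact that $\mathcal{B}(\infty)$ is by construction closed under the lowering operators $\tilde{f}_i$, together with the reciprocal relation $\tilde{e}_i\tilde{f}_i b = b$ verified modulo $qL(\infty)$. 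With (1) and (2) established, assertion (3) is largely formal: the map $\mathrm{wt}$ comes from the root-lattice grading of $U_q(\mathfrak{u}^-)$, the definition $\varepsilon_i(b) = \max\{k\mid\tilde{e}_i^k b\ne 0\}$ makes crystal axioms (ii) and (iii) automatic, axiom (iv) is the reciprocal relation from step (2), and axiom (v) is vacuous because $\varphi_i$ takes values in $\z$ on $\mathcal{B}(\infty)$.

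The main obstacle is the grand loop itself. Because $e_i'$ and $\tilde{f}_j$ do not commute for $i\ne j$, one has to maintain tight congruences modulo $qL(\infty)$ throughout the induction, and these become especially delicate when $\langle\alpha_j,h_i\rangle\ne 0$. The combinatorial crux of the whole argument is the simultaneous verification of $\tilde{f}_i\tilde{e}_i b = b$ for $b\in\mathcal{B}(\infty)$ with $\tilde{e}_i b\ne 0$: both assertion (2) and axiom (iv) depend on this, and without it the induction cannot close.
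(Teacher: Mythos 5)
The paper states this proposition without proof, citing it directly to \cite[Theorem 4]{Kas2}; your sketch is a faithful high-level reconstruction of Kashiwara's grand-loop induction from that source, built as you say on the $q$-boson decomposition $U_q(\mathfrak{u}^-)_{-\xi}=\bigoplus_{k}f_i^{(k)}(\ker e_i'\cap U_q(\mathfrak{u}^-)_{-\xi+k\alpha_i})$, so the approaches coincide. Two small imprecisions worth noting: the identity you need to push $\tilde{e}_j$ past $\tilde{f}_i$ is the $q$-boson commutation relation $e_i'f_j=q_i^{-\langle\alpha_j,h_i\rangle}f_je_i'+\delta_{ij}$ rather than a Serre-type relation (the Serre relations among the $f_i$ enter separately, to identify the algebra); and crystal axiom (iii) is not purely formal from the definition of $\varepsilon_i$ but itself uses the reciprocal relation $\tilde{e}_i\tilde{f}_ib=b$, so (iii) and (iv) should be grouped together rather than declaring (iii) automatic. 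Likewise ``$\mathcal{B}(\infty)$ is closed under $\tilde{f}_i$ by construction'' conceals the nontrivial claim $\tilde{f}_ib\neq 0$ in $L(\infty)/qL(\infty)$, which is one of the statements the loop must carry along. These are presentational, not substantive gaps.
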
\vspace{2mm}

The pair $(L(\infty), \mathcal{B}(\infty))$ is called the {\it lower crystal basis} of $U_q (\mathfrak{u}^-)$. We see from \cite[Proposition 5.2.4]{Kas2} that $L(\infty)^\ast = L(\infty)$, and from \cite[Theorem 2.1.1]{Kas4} that $\mathcal{B}(\infty)^\ast = \mathcal{B}(\infty)$. Hence the maps \[{\rm wt},\ \varepsilon_i ^\ast := \varepsilon_i \circ \ast,\ \varphi_i ^\ast := \varphi_i \circ \ast,\ \tilde{e}_i ^\ast := \ast \circ \tilde{e}_i \circ \ast,\ {\rm and}\ \tilde{f}_i ^\ast := \ast \circ \tilde{f}_i \circ \ast,\ i \in I,\] provide another crystal structure on $\mathcal{B}(\infty)$. For a dominant integral weight $\lambda$, let $V_q (\lambda)$ denote the irreducible highest weight $U_q(\mathfrak{g})$-module with highest weight $\lambda$ over $\q(q)$, and $v_{q, \lambda} \in V_q (\lambda)$ the highest weight vector. We take a free $A$-submodule $L(\lambda) \subset V_q(\lambda)$, a $\q$-basis $\mathcal{B} (\lambda) \subset L(\lambda)/q L(\lambda)$, and operators $\tilde{e}_i, \tilde{f}_i$, $i \in I$, on $V_q (\lambda)$ as in \cite[Section 2]{Kas2} (note that we use the notation $\mathcal{B}(\lambda)$ instead of $B(\lambda)$); the operators $\tilde{e}_i, \tilde{f}_i$, $i \in I$, are also called the {\it Kashiwara operators}. 

\vspace{2mm}\begin{prop}[{see \cite[Theorem 2]{Kas2}}]
For a dominant integral weight $\lambda$, the following hold.
\begin{enumerate}
\item[{\rm (1)}] $\tilde{e}_i L(\lambda) \subset L(\lambda)$ and $\tilde{f}_i L(\lambda) \subset L(\lambda)$ for all $i \in I;$ hence $\tilde{e}_i, \tilde{f}_i$, $i \in I$, act on $L(\lambda)/qL(\lambda)$.
\item[{\rm (2)}] $\tilde{e}_i \mathcal{B}(\lambda) \subset \mathcal{B}(\lambda) \cup \{0\}$ and $\tilde{f}_i \mathcal{B}(\lambda) \subset \mathcal{B}(\lambda) \cup \{0\}$ for all $i \in I$. 
\item[{\rm (3)}] Define maps $\varepsilon_i, \varphi_i: \mathcal{B}(\lambda) \rightarrow \z \cup \{-\infty\}$ for $i \in I$ by \[\varepsilon_i(b) := \max\{k \in \z_{\ge 0} \mid \tilde{e}_i ^k b \neq 0\},\ {\it and}\ \varphi_i (b) := \max\{k \in \z_{\ge 0} \mid \tilde{f}_i ^k b \neq 0\}.\] Then, the maps ${\rm wt}$, $\varepsilon_i$, $\varphi_i$, $\tilde{e}_i$, and $\tilde{f}_i$, $i \in I$, provide a crystal structure on $\mathcal{B}(\lambda)$. 
\end{enumerate}
\end{prop}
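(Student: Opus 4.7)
The plan is to reduce Proposition 2.4 to the statements for $U_q(\mathfrak{u}^-)$ already recorded in the previous proposition, by exploiting the natural surjection $\pi_\lambda \colon U_q(\mathfrak{u}^-) \twoheadrightarrow V_q(\lambda)$ sending $u \mapsto u \cdot v_{q,\lambda}$. First I would verify the compatibility $\pi_\lambda(L(\infty)) = L(\lambda)$ and that the induced map $\overline{\pi}_\lambda \colon L(\infty)/qL(\infty) \to L(\lambda)/qL(\lambda)$ sends $\mathcal{B}(\infty)$ into $\mathcal{B}(\lambda) \cup \{0\}$ and surjects onto $\mathcal{B}(\lambda) \sqcup \{0\}$. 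This follows once one identifies $L(\lambda)$ with the $A$-submodule generated by the vectors $\tilde{f}_{i_1} \cdots \tilde{f}_{i_k} v_{q,\lambda}$ (and the analogous description holds for $L(\infty)$ with the class of $1$ in place of $v_{q,\lambda}$), together with the compatibility $\pi_\lambda \circ \tilde{f}_i = \tilde{f}_i \circ \pi_\lambda$.

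The main technical tool is the $i$-string decomposition of $V_q(\lambda)$ with respect to the subalgebra $U_{q_i}(\mathfrak{sl}_2) \subset U_q(\mathfrak{g})$ generated by $e_i, f_i, t_i^{\pm 1}$: every weight vector $v \in V_q(\lambda)$ has a unique expression $v = \sum_{n \ge 0} f_i^{(n)} u_n$ with $u_n \in \ker(e_i)$, and the Kashiwara operators are then defined by $\tilde{e}_i v := \sum_n f_i^{(n-1)} u_n$ and $\tilde{f}_i v := \sum_n f_i^{(n+1)} u_n$. I would show by induction on the weight that $L(\lambda)$ is stable under this decomposition, i.e., each summand $f_i^{(n)} u_n$ lies in $L(\lambda)$ whenever $v$ does. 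This gives part (1) directly, and part (2) follows by reducing modulo $q$, since the decomposition descends to $L(\lambda)/qL(\lambda)$. For part (3), the crystal axioms are read off from the $i$-string picture: within each $i$-string the weights progress by $\alpha_i$, so $\varphi_i(b) = \varepsilon_i(b) + \langle \mathrm{wt}(b), h_i \rangle$ holds by counting; the shift relations of axioms (ii) and (iii) are built into the definitions of $\tilde{e}_i, \tilde{f}_i$; and the inversion (iv) is immediate from the uniqueness of the decomposition.

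The main obstacle is proving that $L(\lambda)$ is indeed compatible with the $i$-string decomposition. In Kashiwara's original approach \cite{Kas2} this is handled via a simultaneous induction (the ``grand loop'') on the weight, establishing in parallel the stability of $L(\lambda)$ under each $\tilde{e}_i$ and $\tilde{f}_i$, the compatibility of crystal bases with tensor products $V_q(\lambda_1) \otimes V_q(\lambda_2)$, and the corresponding statements for $L(\infty)$. The most delicate step is the tensor product case, since the argument proceeds by embedding $V_q(\lambda)$ into some $V_q(\lambda_1) \otimes V_q(\lambda_2)$ with $\lambda = \lambda_1 + \lambda_2$ and transferring properties, while the tensor product statement itself requires the crystal basis property for each factor. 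Rather than reproduce this substantial argument here, I would invoke \cite[Theorem 2]{Kas2} directly, exactly as the paper does.
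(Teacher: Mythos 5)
The paper gives no proof of this proposition; it is quoted directly from Kashiwara's Theorem~2 in \cite{Kas2}, and your final paragraph defers to the same citation, so the approaches coincide in the end. One caution about the opening sketch, though: the compatibility $\pi_\lambda(L(\infty)) = L(\lambda)$ together with $\pi_\lambda \circ \tilde{f}_i = \tilde{f}_i \circ \pi_\lambda$ that you propose to exploit is precisely the content of Kashiwara's Theorem~5 (Proposition~\ref{connection of crystals} in the paper), which in \cite{Kas2} is established \emph{using} the crystal base theorems for both $V_q(\lambda)$ and $U_q(\mathfrak{u}^-)$; taking it as an input to the present proposition would therefore be circular, and there is no clean one-way reduction from $\mathcal{B}(\lambda)$ to $\mathcal{B}(\infty)$. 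You do recognize this in your closing paragraph --- the genuine argument is a simultaneous induction (the grand loop) on the weight that establishes the statements for $\mathcal{B}(\infty)$, $\mathcal{B}(\lambda)$, and tensor products in parallel --- so the opening ``plan'' should really be read as a description of the desired compatibilities rather than a derivation. Your middle paragraph's account of the $i$-string decomposition $v = \sum_{n\ge 0} f_i^{(n)} u_n$ with $u_n \in \ker(e_i)$, of the definitions of $\tilde{e}_i$ and $\tilde{f}_i$, and of why the crystal axioms follow once stability of $L(\lambda)$ under this decomposition is known, is accurate; the stability claim is the hard part, and the grand loop is exactly what handles it, so deferring to \cite[Theorem~2]{Kas2} at that point is the right call.
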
\vspace{2mm}

The pair $(L(\lambda), \mathcal{B}(\lambda))$ is called the {\it lower crystal basis} of $V_q (\lambda)$. The crystals $\mathcal{B}(\infty)$ and $\mathcal{B} (\lambda)$ are related as follows.

\vspace{2mm}\begin{prop}[{\cite[Theorem 5]{Kas2}}]\label{connection of crystals}
For a dominant integral weight $\lambda$, let $\pi_\lambda: U_q (\mathfrak{u}^-) \twoheadrightarrow V_q (\lambda)$ denote the surjective $U_q (\mathfrak{u}^-)$-module homomorphism given by $u \mapsto u \cdot v_{q, \lambda}$. 
\begin{enumerate}
\item[{\rm (1)}] The equality $\pi_\lambda (L(\infty)) = L (\lambda)$ holds$;$ hence $\pi_\lambda$ induces a surjective $\q$-linear map $L(\infty)/q L(\infty) \twoheadrightarrow L(\lambda)/q L(\lambda)$ $($denoted also by $\pi_\lambda)$.
\item[{\rm (2)}] The $\q$-linear map $\pi_\lambda$ induces a bijective map $\pi_\lambda: \widetilde{\mathcal{B}} (\lambda) \xrightarrow[]{\sim} \mathcal{B} (\lambda)$, where \[\widetilde{\mathcal{B}} (\lambda) := \{b \in \mathcal{B}(\infty) \mid \pi_\lambda(b) \neq 0\}.\]
\item[{\rm (3)}] $\tilde{f}_i \pi_\lambda(b) = \pi_\lambda (\tilde{f}_i b)$ for all $i \in I$, $b \in \mathcal{B}(\infty)$.
\item[{\rm (4)}] $\tilde{e}_i \pi_\lambda(b) = \pi_\lambda (\tilde{e}_i b)$ for all $i \in I$, $b \in \widetilde{\mathcal{B}}(\lambda)$.
\end{enumerate}
\end{prop}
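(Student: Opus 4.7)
This is Kashiwara's classical compatibility theorem between the lower crystal bases of $U_q(\mathfrak u^-)$ and of $V_q(\lambda)$; I would follow his original grand-loop argument from \cite{Kas2}, a simultaneous induction on the weight proving (1)--(4) in parallel.

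Note first that $\pi_\lambda$ is weight-graded, mapping $U_q(\mathfrak u^-)_{-\xi}$ onto $V_q(\lambda)_{\lambda-\xi}$. I would induct on the height of $\xi \in \sum_{i \in I} \z_{\ge 0}\alpha_i$, establishing the weight-$(-\xi)$ part of (1)--(4) at the same step. The base case $\xi = 0$ is immediate, since $\pi_\lambda(1) = v_{q,\lambda}$, so the class of $1$ in $L(\infty)/qL(\infty)$ is sent to the class of $v_{q,\lambda}$ in $L(\lambda)/qL(\lambda)$. For the inductive step, the key device is the pair of parallel ``$i$-string'' decompositions
\[
U_q(\mathfrak u^-) = \bigoplus_{k \ge 0} f_i^{(k)} \ker(e'_i),
\qquad
V_q(\lambda) = \bigoplus_{k \ge 0} f_i^{(k)}\bigl(\ker e_i \cap V_q(\lambda)\bigr),
\]
via which $\tilde e_i, \tilde f_i$ are defined on either side as exponent-shifts. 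Using the commutation relation between $e_i$ and $f_i^{(k)}$ together with $e_i v_{q,\lambda} = 0$, one verifies that $\pi_\lambda$ intertwines these decompositions: if $u = \sum_k f_i^{(k)} u_k$ with $u_k \in \ker(e'_i) \cap L(\infty)$, then $\pi_\lambda(u_k)$ lies in $(\ker e_i \cap V_q(\lambda)) \cap L(\lambda)$ and $\pi_\lambda(u) = \sum_k f_i^{(k)} \pi_\lambda(u_k)$, with those summands absent from $V_q(\lambda)$ for which $f_i^{(k)} \pi_\lambda(u_k)$ already vanishes on weight grounds. Combined with the inductive hypothesis, this compatibility yields (1) and (3) in weight $-\xi$ and shows that $\pi_\lambda(b) \in \mathcal B(\lambda) \cup \{0\}$ for each $b \in \mathcal B(\infty)_{-\xi}$.

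Assertion (2) then follows: $\mathcal B(\lambda)$ is generated from the class of $v_{q, \lambda}$ by the $\tilde f_i$, so surjectivity of $\pi_\lambda$ on $\widetilde{\mathcal B}(\lambda)$ is inherited through (3), while injectivity is built into the inductive step (different $\tilde f_i$-strings in the decomposition of $L(\infty)/qL(\infty)$ map to different $\tilde f_i$-strings in $L(\lambda)/qL(\lambda)$ whenever the image is nonzero). Assertion (4) is then obtained from (3) together with the crystal axiom $b' = \tilde e_i b \iff b = \tilde f_i b'$ applied on both sides, plus the observation that $\widetilde{\mathcal B}(\lambda)$ is $\tilde e_i$-stable (since $\tilde e_i$ strictly raises weight).

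The main obstacle is the vanishing bookkeeping in the inductive step: one must pin down exactly when $\pi_\lambda(\tilde f_i b) = 0$, equivalently when $\tilde f_i b \notin \widetilde{\mathcal B}(\lambda)$. This is precisely what forces the grand loop of \cite{Kas2} to interleave all four assertions with the compatibility of the Kashiwara operators into a single inductive web, rather than permitting a cleaner stepwise proof.
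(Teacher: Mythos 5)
The paper does not prove this proposition; it is quoted as \cite[Theorem 5]{Kas2}, so the ``paper's proof'' is simply Kashiwara's grand-loop argument, and your sketch faithfully reconstructs its structure (simultaneous induction on weight via the parallel $i$-string decompositions). One small slip worth noting: the $\tilde{e}_i$-stability of $\widetilde{\mathcal{B}}(\lambda)$ is not a consequence of $\tilde{e}_i$ raising weight --- raising weight alone does not prevent $\pi_\lambda$ from killing $\tilde{e}_i b$ --- but rather follows from (3) applied to $b' := \tilde{e}_i b$: since $\tilde{f}_i b' = b$, one has $\pi_\lambda(b) = \pi_\lambda(\tilde{f}_i b') = \tilde{f}_i \pi_\lambda(b')$, which would vanish if $\pi_\lambda(b') = 0$, contradicting $b \in \widetilde{\mathcal{B}}(\lambda)$.
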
\vspace{2mm}

It follows easily from Proposition \ref{connection of crystals} (3), (4) that $\varepsilon_i (\pi_\lambda(b)) = \varepsilon_i (b)$, and that $\varphi_i (\pi_\lambda (b)) = \varphi_i (b) + \langle\lambda, h_i\rangle$ for all $b \in \widetilde{\mathcal{B}}(\lambda)$. 

Now we denote by $U_q ^\q (\mathfrak{u}^-)$ the $\q[q, q^{-1}]$-subalgebra of $U_q (\mathfrak{u}^-)$ generated by $\{f_i ^{(k)} \mid i \in I,\ k \in \z_{\ge 0}\}$, and set $V_q ^{\q} (\lambda) := \pi_\lambda (U_q ^\q (\mathfrak{u}^-))$; note that the $U_q ^\q (\mathfrak{u}^-)$-submodule $V_q ^{\q} (\lambda)$ is invariant under the action of $e_i$, $f_i$, and $(t_i - t_i ^{-1})/(q_i - q_i ^{-1})$ for all $i \in I$. Also, we define a $\q$-involution ${}^-$ on $V_q (\lambda)$ by $\overline{u \cdot v_{q, \lambda}}:= \overline{u} \cdot v_{q, \lambda}$ for $u \in U_q (\mathfrak{g})$. Then, the natural maps 
\begin{align*}
L(\infty) \cap \overline{L(\infty)} \cap U_q ^\q (\mathfrak{u}^-) &\rightarrow L(\infty)/q L(\infty)\ {\rm and}\\
L(\lambda) \cap \overline{L(\lambda)} \cap V_q ^\q (\lambda) &\rightarrow L(\lambda)/q L(\lambda)
\end{align*}
are isomorphisms of $\q$-vector spaces (\cite[Theorem 6]{Kas2}). If we denote by $G_q ^{\rm low}: L(\infty)/q L(\infty) \rightarrow L(\infty) \cap \overline{L(\infty)} \cap U_q ^\q (\mathfrak{u}^-)$ and $G^{\rm low} _{q, \lambda}: L(\lambda)/q L(\lambda) \rightarrow L(\lambda) \cap \overline{L(\lambda)} \cap V_q ^\q (\lambda)$ the inverses of these isomorphisms, respectively, then the set $\{G_q ^{\rm low} (b) \mid b \in \mathcal{B}(\infty)\}$ (resp., $\{G^{\rm low} _{q, \lambda} (b) \mid b \in \mathcal{B} (\lambda)\}$) forms a $\q[q, q^{-1}]$-basis of $U_q ^\q (\mathfrak{u}^-)$ (resp., $V_q ^\q (\lambda)$); this is called the {\it lower global basis} of $U_q (\mathfrak{u}^-)$ (resp., the {\it lower global basis} of $V_q (\lambda)$). The following is a fundamental property of these bases.

\vspace{2mm}\begin{prop}\label{properties of global bases}
For a dominant integral weight $\lambda$, the following hold.
\begin{enumerate}
\item[{\rm (1)}] $G_q ^{\rm low} (b)^\ast = G_q ^{\rm low} (b^\ast)$ for all $b \in \mathcal{B}(\infty)$.
\item[{\rm (2)}] $\pi_\lambda(G_q ^{\rm low} (b)) = G^{\rm low} _{q, \lambda} (\pi_\lambda (b))$ for all $b \in \mathcal{B}(\infty)$.
\item[{\rm (3)}] For all $i \in I$, $b \in \mathcal{B}(\lambda)$, and $k \in \z_{\ge 0}$,
\begin{align*}
&e_i ^{(k)} \cdot G^{\rm low} _{q, \lambda} (b) \in \genfrac{[}{]}{0pt}{}{\varphi_i (b) + k}{k}_i G^{\rm low} _{q, \lambda} (\tilde{e}_i ^k b) + \sum_{\substack{b^\prime \in \mathcal{B}(\lambda);\ {\rm wt}(b^\prime) = {\rm wt}(\tilde{e}_i ^k b),\\ \varphi_i (b^\prime) > \varphi_i (\tilde{e}_i ^k b)}} \z[q, q^{-1}] G^{\rm low} _{q, \lambda} (b^\prime),\ {\it and}\\
&f_i ^{(k)} \cdot G^{\rm low} _{q, \lambda} (b) \in \genfrac{[}{]}{0pt}{}{\varepsilon_i (b) + k}{k}_i G^{\rm low} _{q, \lambda} (\tilde{f}_i ^k b) + \sum_{\substack{b^\prime \in \mathcal{B}(\lambda);\ {\rm wt}(b^\prime) = {\rm wt}(\tilde{f}_i ^k b),\\ \varepsilon_i (b^\prime) > \varepsilon_i (\tilde{f}_i ^k b)}} \z[q, q^{-1}] G^{\rm low} _{q, \lambda} (b^\prime).
\end{align*}
\item[{\rm (4)}] For all $i \in I$, $b \in \mathcal{B}(\infty)$, and $k \in \z_{\ge 0}$,
\begin{align*}
&f_i ^{(k)} \cdot G_q ^{\rm low} (b) \in \genfrac{[}{]}{0pt}{}{\varepsilon_i (b) + k}{k}_i G_q ^{\rm low} (\tilde{f}_i ^k b) + \sum_{\substack{b^\prime \in \mathcal{B}(\infty);\ {\rm wt}(b^\prime) = {\rm wt}(\tilde{f}_i ^k b),\\ \varepsilon_i (b^\prime) > \varepsilon_i (\tilde{f}_i ^k b)}} \z[q, q^{-1}] G_q ^{\rm low} (b^\prime),\ {\it and}\\
&G_q ^{\rm low} (b) \cdot f_i ^{(k)} \in \genfrac{[}{]}{0pt}{}{\varepsilon_i ^\ast (b) + k}{k}_i G_q ^{\rm low} ((\tilde{f}_i ^\ast)^k b) + \sum_{\substack{b^\prime \in \mathcal{B}(\infty);\ {\rm wt}(b^\prime) = {\rm wt}((\tilde{f}_i ^\ast)^k b),\\ \varepsilon_i ^\ast (b^\prime) > \varepsilon_i ^\ast ((\tilde{f}_i ^\ast)^k b)}} \z[q, q^{-1}] G_q ^{\rm low} (b^\prime).
\end{align*}
\end{enumerate}
\end{prop}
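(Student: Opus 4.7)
The plan is to verify each of (1)--(4) using the characterization of the lower global basis as the unique element of $L(\infty) \cap \overline{L(\infty)} \cap U_q^{\q}(\mathfrak{u}^-)$ (respectively $L(\lambda) \cap \overline{L(\lambda)} \cap V_q^{\q}(\lambda)$) whose reduction modulo $q$ is a prescribed element of the crystal basis. Statements (1) and (2) should fall out almost directly from this uniqueness once one checks that the relevant operation preserves all three factors of the intersection and induces the expected map modulo $q$. Statements (3) and (4), in contrast, encode how divided powers act on global basis elements and demand a more delicate reduction to $U_q(\mathfrak{sl}_2)$ along $i$-strings.

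For (1), I would show that $G_q^{\mathrm{low}}(b)^{\ast}$ lies in $L(\infty) \cap \overline{L(\infty)} \cap U_q^{\q}(\mathfrak{u}^-)$ and reduces to $b^{\ast}$ modulo $qL(\infty)$. The inclusion in $U_q^{\q}(\mathfrak{u}^-)$ uses $f_i^{\ast} = f_i$ together with the fact that the anti-involution $\ast$ preserves divided powers. The identity $L(\infty)^{\ast} = L(\infty)$ is \cite[Proposition 5.2.4]{Kas2}, and then $\overline{L(\infty)}^{\ast} = \overline{L(\infty)}$ because $\ast$ commutes with the bar involution. The induced map on $L(\infty)/qL(\infty)$ is, by the very definition of the $\ast$-crystal structure, $b \mapsto b^{\ast}$, so uniqueness of the global basis lift concludes. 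For (2) the argument is parallel: $\pi_{\lambda}$ sends $L(\infty)$ to $L(\lambda)$ by Proposition \ref{connection of crystals}(1) and $U_q^{\q}(\mathfrak{u}^-)$ to $V_q^{\q}(\lambda)$ by definition, and commutes with the bar involution by construction. Hence $\pi_{\lambda}(G_q^{\mathrm{low}}(b))$ lies in $L(\lambda) \cap \overline{L(\lambda)} \cap V_q^{\q}(\lambda)$ and reduces modulo $q$ to $\pi_{\lambda}(b)$. When $\pi_{\lambda}(b) \in \mathcal{B}(\lambda)$, uniqueness of the lift gives the formula; when $\pi_{\lambda}(b) = 0$, the element $\pi_{\lambda}(G_q^{\mathrm{low}}(b))$ lies in the kernel of the isomorphism $L(\lambda) \cap \overline{L(\lambda)} \cap V_q^{\q}(\lambda) \xrightarrow{\sim} L(\lambda)/qL(\lambda)$ and therefore vanishes.

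For (3), the crux is that each $i$-string in $\mathcal{B}(\lambda)$ is the crystal basis of an irreducible $U_{q,i}$-subquotient of $V_q(\lambda)$, where $U_{q,i} \cong U_q(\mathfrak{sl}_2)$ is the subalgebra generated by $e_i, f_i, t_i^{\pm 1}$. Modulo $qL(\lambda)$ the global basis vectors along a single string agree with the standard Kashiwara basis $\tilde{f}_i^{\,j}\cdot(\text{top of string})$, so the $\mathfrak{sl}_2$-identity $e^{(k)} f^{(j)} v_0 = \genfrac{[}{]}{0pt}{}{n-j+k}{k} f^{(j-k)} v_0$ lifts to give the leading coefficient $\genfrac{[}{]}{0pt}{}{\varphi_i(b)+k}{k}_i$ in front of $G_{q,\lambda}^{\mathrm{low}}(\tilde{e}_i^{\,k}b)$; the triangularity $\varphi_i(b') > \varphi_i(\tilde{e}_i^{\,k}b)$ on the remaining terms expresses that moving up an $i$-string strictly increases $\varphi_i$, so any admixture from a different string of the same weight must come from one whose top sits higher. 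The $f_i^{(k)}$-formula is proved analogously, with the roles of $\varepsilon_i$ and $\varphi_i$ interchanged. For (4), the left-action formula for $G_q^{\mathrm{low}}(b)$ is proved by the same string analysis carried out inside $U_q(\mathfrak{u}^-)$. The right-action formula then follows from the left one together with (1): since $\ast$ is an anti-involution with $f_i^{\ast}=f_i$, one has
\[G_q^{\mathrm{low}}(b)\cdot f_i^{(k)} = \bigl(f_i^{(k)}\cdot G_q^{\mathrm{low}}(b^{\ast})\bigr)^{\ast},\]
so applying the left-action formula to $G_q^{\mathrm{low}}(b^{\ast})$ and then invoking (1) produces exactly the $\ast$-version on the right-hand side.

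The main obstacle I anticipate is the triangularity in (3)--(4). Establishing that the error global basis elements genuinely satisfy the \emph{strict} inequality $\varphi_i(b') > \varphi_i(\tilde{e}_i^{\,k}b)$, rather than merely $\ge$, requires careful control of how the bar involution interacts with the $i$-string decomposition modulo $q$; this is essentially Kashiwara's compatibility argument in \cite[Section 4]{Kas2}, and I would follow that blueprint.
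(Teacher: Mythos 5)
Your argument for (1) and for the second assertion of (4) coincides with the paper's: part (1) is the uniqueness-of-lift argument using $\ast$-stability of $L(\infty)\cap\overline{L(\infty)}\cap U_q^\q(\mathfrak{u}^-)$, and the second half of (4) is the $\ast$-anti-involution trick applied to part (1) and the first half. The divergence is in (2), (3), and the first half of (4). For (2) and (3) the paper does not re-derive these facts but simply cites \cite[Lemma 7.3.2]{Kas2} and \cite[equation (3.1.2)]{Kas4}; your plan for (2) is essentially a gloss on the content of that lemma (commutation of $\pi_\lambda$ with all three constraints plus uniqueness), so this is fine, and your plan for (3) amounts to re-running Kashiwara's $\mathfrak{sl}_2$-string triangularity argument, which you rightly flag as the delicate step. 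The more substantive difference is the first assertion of (4): rather than repeating the $i$-string analysis inside $U_q(\mathfrak{u}^-)$ (where the strings are semi-infinite and the reduction to $U_q(\mathfrak{sl}_2)$ needs extra care), the paper obtains it for free by applying $\pi_\lambda$ and invoking parts (2) and (3) with $\langle\lambda,h_i\rangle$ taken large enough for the fixed $b$, so that all crystal elements appearing on both sides survive to $\mathcal{B}(\lambda)$ with $\varepsilon_i$ unchanged. This reduction is cleaner and avoids the one place where your direct route would need a new, somewhat technical argument. If you pursue the direct approach, be explicit about how the strict inequality $\varepsilon_i(b')>\varepsilon_i(\tilde f_i^k b)$ persists in the Verma-type setting; otherwise, the paper's reduction via large $\lambda$ is the path of least resistance.
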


\begin{proof}
It follows from the equality $\ast \circ {}^- = {}^- \circ \ast$ that $\overline{L(\infty)}$ and hence $L(\infty) \cap \overline{L(\infty)} \cap U_q ^\q (\mathfrak{u}^-)$ are invariant under Kashiwara's involution $\ast$, which implies part (1). Part (2) (resp., part (3)) is an immediate consequence of \cite[Lemma 7.3.2]{Kas2} (resp., \cite[equation (3.1.2)]{Kas4}). Now we deduce the first assertion of part (4) from part (2) and part (3) (here we take a dominant integral weight $\lambda$ such that $\langle\lambda, h_i\rangle$, $i \in I$, are sufficiently large for each $b \in \mathcal{B}(\infty)$). Finally, the second assertion of part (4) follows from the first assertion of part (4) and from the equalities 
\begin{align*}
(G_q ^{\rm low} (b) \cdot f_i ^{(k)})^\ast &= (f_i ^{(k)})^\ast \cdot G_q ^{\rm low} (b)^\ast\quad ({\rm since}\ \ast\ {\rm is\ a}\ \q(q)\mathchar`-{\rm algebra\ anti\mathchar`-involution})\\
&= f_i ^{(k)} \cdot G_q ^{\rm low} (b^\ast)\quad ({\rm by\ part}\ (1)).
\end{align*}
\end{proof}

\begin{defi}\normalfont
For $w \in W$ and a dominant integral weight $\lambda$, let $v_{q, w\lambda} \in V_q (\lambda)$ denote the (extremal) weight vector of weight $w \lambda$. Then, the $U_q (\mathfrak{u})$-submodule $V_{q, w} (\lambda) := U_q (\mathfrak{u}) \cdot v_{q, w\lambda} \subset V_q (\lambda)$ is called the {\it Demazure module} corresponding to $w$.
\end{defi}\vspace{2mm}

Denote by $b_\infty \in \mathcal{B}(\infty)$ the element corresponding to $1 \in U_q (\mathfrak{u}^-)$, and by $b_\lambda \in \mathcal{B}(\lambda)$ the highest weight element.

\vspace{2mm}\begin{prop}[{see \cite[Propositions 3.2.3, 3.2.5]{Kas4}}]\label{Demazure crystal}
Let ${\bf i} = (i_r, \ldots, i_1)$ be a reduced word for $w \in W$ $($notice the ordering of indices$)$, and $\lambda$ a dominant integral weight.
\begin{enumerate}
\item[{\rm (1)}] The subset \[\mathcal{B}_w(\infty) := \{\tilde{f}_{i_r} ^{a_r} \cdots \tilde{f}_{i_1} ^{a_1} b_\infty \mid a_1, \ldots, a_r \in \z_{\ge 0}\} \subset \mathcal{B}(\infty)\] is independent of the choice of a reduced word ${\bf i}$, and $\tilde{e}_i \mathcal{B}_w(\infty) \subset \mathcal{B}_w(\infty) \cup \{0\}$ for all $i \in I$.
\item[{\rm (2)}] The subset \[\mathcal{B}_w(\lambda) := \{\tilde{f}_{i_r} ^{a_r} \cdots \tilde{f}_{i_1} ^{a_1} b_\lambda \mid a_1, \ldots, a_r \in \z_{\ge 0}\} \setminus \{0\} \subset \mathcal{B}(\lambda)\] is independent of the choice of a reduced word ${\bf i}$, and $\tilde{e}_i \mathcal{B}_w(\lambda) \subset \mathcal{B}_w(\lambda) \cup \{0\}$ for all $i \in I$.
\item[{\rm (3)}] The equality $\pi_\lambda(\mathcal{B}_w(\infty)) = \mathcal{B}_w(\lambda) \cup \{0\}$ holds$;$ hence $\pi_\lambda$ induces a bijective map $\pi_\lambda: \widetilde{\mathcal{B}}_w(\lambda) \xrightarrow{\sim} \mathcal{B}_w(\lambda)$, where $\widetilde{\mathcal{B}}_w(\lambda) := \mathcal{B}_w(\infty) \cap \widetilde{\mathcal{B}}(\lambda)$.
\item[{\rm (4)}] The set $\{G^{\rm low} _{q, \lambda} (b) \mid b \in \mathcal{B}_w (\lambda)\}$ forms a $\q(q)$-basis of $V_{q, w} (\lambda)$.
\end{enumerate}
\end{prop}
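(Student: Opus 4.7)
The plan is to proceed by induction on $\ell(w)$. The base case $w = e$ is immediate: $\mathcal{B}_e(\infty) = \{b_\infty\}$, $\mathcal{B}_e(\lambda) = \{b_\lambda\}$, $V_{q,e}(\lambda) = \q(q)\cdot v_{q,\lambda}$, and $G^{\rm low}_{q,\lambda}(b_\lambda) = v_{q,\lambda}$, so all four assertions hold. For the inductive step I fix a reduced word ${\bf i} = (i_r,\ldots,i_1)$ for $w$, set $w' := s_{i_{r-1}}\cdots s_{i_1}$ so that $w = s_{i_r} w'$ with $\ell(w') = \ell(w)-1$ and $(i_{r-1},\ldots,i_1)$ is a reduced word for $w'$. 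The key input is the standard recursion for Demazure modules,
\[
V_{q,w}(\lambda) \;=\; \sum_{k \ge 0} \q(q)\, f_{i_r}^{(k)}\, V_{q,w'}(\lambda),
\]
derived by writing the extremal weight vector as $v_{q,w\lambda} = f_{i_r}^{(m)} v_{q,w'\lambda}$ (with $m = \langle w'\lambda, h_{i_r}\rangle \ge 0$) and using the commutation relations in $U_q(\mathfrak{g})$.

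The engine of the proof is Proposition \ref{properties of global bases}(3). By the inductive hypothesis, $\{G^{\rm low}_{q,\lambda}(b') \mid b' \in \mathcal{B}_{w'}(\lambda)\}$ is a $\q(q)$-basis of $V_{q,w'}(\lambda)$. Applying $f_{i_r}^{(k)}$ to each such basis element, Proposition \ref{properties of global bases}(3) produces the leading term $\genfrac{[}{]}{0pt}{}{\varepsilon_{i_r}(b')+k}{k}_{i_r}\, G^{\rm low}_{q,\lambda}(\tilde{f}_{i_r}^k b')$ plus corrections $G^{\rm low}_{q,\lambda}(b'')$ with $\varepsilon_{i_r}(b'') > \varepsilon_{i_r}(\tilde{f}_{i_r}^k b')$. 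A triangularity argument on the value of $\varepsilon_{i_r}$, together with the combinatorial identity $\{\tilde{f}_{i_r}^k b' \mid k \ge 0,\ b' \in \mathcal{B}_{w'}(\lambda)\}\setminus\{0\} = \mathcal{B}_w(\lambda)$ coming from the recursive definition, then shows that $\{G^{\rm low}_{q,\lambda}(b) \mid b \in \mathcal{B}_w(\lambda)\}$ is a $\q(q)$-basis of $V_{q,w}(\lambda)$, establishing part~(4).

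Part~(4) immediately yields the independence of $\mathcal{B}_w(\lambda)$ from the chosen reduced word, since it now coincides with the intrinsic set $\{b \in \mathcal{B}(\lambda) \mid G^{\rm low}_{q,\lambda}(b) \in V_{q,w}(\lambda)\}$, which depends only on $w$. The stability $\tilde{e}_i \mathcal{B}_w(\lambda) \subset \mathcal{B}_w(\lambda) \cup \{0\}$ follows from the analogous $e_i^{(k)}$-triangularity in Proposition \ref{properties of global bases}(3), showing that $V_{q,w}(\lambda)$ is stable under each $e_i^{(k)}$ and hence that its distinguished basis is preserved by $\tilde{e}_i$. Part~(3) then falls out of the recursive definitions combined with Proposition \ref{connection of crystals}(3), which intertwines $\pi_\lambda$ with $\tilde{f}_i$, so that $\pi_\lambda(\tilde{f}_{i_r}^{a_r}\cdots\tilde{f}_{i_1}^{a_1} b_\infty) = \tilde{f}_{i_r}^{a_r}\cdots\tilde{f}_{i_1}^{a_1} b_\lambda$. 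For the $\mathcal{B}_w(\infty)$ half of part~(1), I would either repeat the above argument with Proposition \ref{properties of global bases}(4) replacing (3), or lift the $\mathcal{B}_w(\lambda)$ results via $\pi_\lambda$ for $\lambda$ regular enough that $\widetilde{\mathcal{B}}_w(\lambda) \to \mathcal{B}_w(\lambda)$ faithfully detects the structure of $\mathcal{B}_w(\infty)$.

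The main obstacle lies in the triangularity argument: to conclude that $V_{q,w}(\lambda)$ is spanned by $\{G^{\rm low}_{q,\lambda}(b) \mid b \in \mathcal{B}_w(\lambda)\}$, one must verify that every correction term $G^{\rm low}_{q,\lambda}(b'')$ appearing on the right of Proposition \ref{properties of global bases}(3) is again indexed by an element of $\mathcal{B}_w(\lambda)$. This is handled by a secondary induction on $\varepsilon_{i_r}$ — equivalently, by filtering the basis of $V_{q,w'}(\lambda)$ by the value of $\varphi_{i_r}$ and processing it in decreasing order — so that every $b''$ has already been shown to lie in $\mathcal{B}_w(\lambda)$ before it is invoked as a correction. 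Once this bookkeeping is arranged, nonvanishing of the leading quantum binomial coefficient (automatic from $\varepsilon_{i_r}(b')+k \ge k$) closes the argument.
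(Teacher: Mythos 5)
The paper gives no proof of this proposition; it is cited wholesale from Kashiwara's \cite[Propositions 3.2.3, 3.2.5]{Kas4}. So the comparison must be against Kashiwara's argument, whose logical order your proposal inverts: Kashiwara establishes the crystal-combinatorial statements (1)--(3) first (using the $\ast$-structure and string properties of $\mathcal{B}(\infty)$, and the companion Lemma 3.2.1 there) and only then derives the global-basis statement (4). You try to go the other way, proving (4) first via triangularity and then reading off (1)--(3) from it.

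The preparatory steps are fine. The recursion $V_{q,w}(\lambda) = \sum_k \q(q)\,f_{i_r}^{(k)}\,V_{q,w'}(\lambda)$ does follow from $v_{q,w\lambda} = f_{i_r}^{(m)}v_{q,w'\lambda}$ together with the $U_q(\mathfrak{g})$-commutation relations (both inclusions check out; in particular $V_{q,w}(\lambda)$ is $f_{i_r}$-stable because $f_{i_r}v_{q,w\lambda}=0$). The combinatorial identity $\mathcal{B}_w(\lambda) = \{\tilde{f}_{i_r}^k b' : k\geq 0,\ b'\in\mathcal{B}_{w'}(\lambda)\}\setminus\{0\}$ is immediate, and once (4) is in hand, extracting (1), (2), (3) as you describe is legitimate: word-independence because $\mathcal{B}_w(\lambda)$ is recovered as $\{b : G^{\rm low}_{q,\lambda}(b)\in V_{q,w}(\lambda)\}$ by linear independence of the global basis, and $\tilde{e}_i$-stability because $e_i V_{q,w}(\lambda)\subset V_{q,w}(\lambda)$ and the leading coefficient $[\varphi_i(b)+1]_i$ of $G^{\rm low}_{q,\lambda}(\tilde{e}_i b)$ in $e_i G^{\rm low}_{q,\lambda}(b)$ is nonzero.

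The genuine gap is in the triangularity argument itself, exactly where you flagged it. Proposition \ref{properties of global bases}(3) writes $f_{i_r}^{(k)}G^{\rm low}_{q,\lambda}(b')$ as the leading term $\genfrac{[}{]}{0pt}{}{\varepsilon_{i_r}(b')+k}{k}_{i_r}G^{\rm low}_{q,\lambda}(\tilde{f}_{i_r}^k b')$ plus corrections $G^{\rm low}_{q,\lambda}(b'')$ with $b''\in\mathcal{B}(\lambda)$ of the same weight and $\varepsilon_{i_r}(b'')>\varepsilon_{i_r}(\tilde{f}_{i_r}^k b')$. To conclude either that $G^{\rm low}_{q,\lambda}(\tilde{f}_{i_r}^k b')\in V_{q,w}(\lambda)$ or that $f_{i_r}^{(k)}G^{\rm low}_{q,\lambda}(b')$ lies in $\operatorname{span}\{G^{\rm low}_{q,\lambda}(b): b\in\mathcal{B}_w(\lambda)\}$, you need the correction indices $b''$ to lie in $\mathcal{B}_w(\lambda)$. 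Your proposed secondary induction on $\varepsilon_{i_r}$ gives you, at the inductive step, only the statement ``for $b''\in\mathcal{B}_w(\lambda)$ with $\varepsilon_{i_r}(b'')>\ell$, $G^{\rm low}_{q,\lambda}(b'')\in V_{q,w}(\lambda)$'', whereas what must be invoked is the same for arbitrary $b''\in\mathcal{B}(\lambda)$ with $\varepsilon_{i_r}(b'')>\ell$. The correction elements $b''$ are not on the $i_r$-string through $\tilde{f}_{i_r}^k b'$ (they have the same weight but larger $\varepsilon_{i_r}$, hence belong to longer, disjoint $i_r$-strings), so there is no reason they should be of the form $\tilde{f}_{i_r}^{j}c'$ with $c'\in\mathcal{B}_{w'}(\lambda)$. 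Showing that they nonetheless lie in $\mathcal{B}_w(\lambda)$ is essentially what is hard about this proposition; it is precisely why Kashiwara proves the stability (2) (and the independence (1)) first from the structure theory of $\mathcal{B}(\infty)$, and only then deduces the global-basis compatibility (4). As written, your argument for (4) is circular, and the ``bookkeeping'' claimed in the final paragraph does not close the loop.
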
\vspace{2mm}

The subsets $\mathcal{B}_w (\infty), \mathcal{B}_w (\lambda)$ are called {\it Demazure crystals}.

\subsection{Polyhedral realizations of Demazure crystals}

In this subsection, we recall some fundamental properties of Nakashima-Zelevinsky polyhedral realizations of crystal bases (following \cite{NZ}, \cite{N1}, and \cite{N2}). Fix a reduced word ${\bf i} = (i_N, \ldots, i_1)$ for the longest element $w_0 \in W$, and consider an extension $\tilde{\bf i} = (\ldots, i_k, \ldots, i_{N+1}, i_N, \ldots, i_1)$ of ${\bf i}$ such that $i_k \neq i_{k+1}$ for all $k \ge 1$, and such that the cardinality of $\{k \ge 1 \mid i_k = i\}$ is $\infty$ for each $i \in I$. Following \cite{Kas4} and \cite{NZ}, we associate to $\tilde{\bf i}$ a crystal structure on $\z^{\infty} := \{(\ldots, a_k, \ldots, a_2, a_1) \mid a_k \in \z\ {\rm and}\ a_k = 0\ {\rm for}\ k \gg 0\}$ as follows. For $k \ge 1$, $i \in I$, and ${\bf a} = (\ldots, a_j, \ldots, a_2, a_1) \in \z^\infty$, we set 
\begin{align*}
\sigma_k({\bf a}) &:= a_k + \sum_{j > k} \langle \alpha_{i_j}, h_{i_k} \rangle a_j \in \z,\\
\sigma^{(i)}({\bf a}) &:= \max\{\sigma_k({\bf a}) \mid k \ge 1,\ i_k = i\} \in \z,\ {\rm and}\\
M^{(i)}({\bf a}) &:= \{k \ge 1 \mid i_k = i,\ \sigma_k({\bf a}) = \sigma^{(i)}({\bf a})\}.
\end{align*}
Since $a_j = 0$ for $j \gg 0$, the integers $\sigma_k({\bf a}), \sigma^{(i)}({\bf a})$ are well-defined; also, we have $\sigma^{(i)}({\bf a}) \ge 0$. Moreover, $M^{(i)}({\bf a})$ is a finite set if and only if $\sigma^{(i)}({\bf a}) > 0$. Define a crystal structure on $\z^\infty$ by 
\begin{align*}
{\rm wt}({\bf a}) &:= - \sum_{j = 1} ^\infty a_j \alpha_{i_j},\ \varepsilon_i({\bf a}) := \sigma^{(i)}({\bf a}),\ \varphi_i({\bf a}) := \varepsilon_i ({\bf a}) + \langle {\rm wt}({\bf a}), h_i \rangle,\ {\rm and}\\
\tilde{e}_i {\bf a} &:= 
\begin{cases}
(a_k - \delta_{k, \max M^{(i)}({\bf a})})_{k \ge 1} &{\rm if}\ \sigma^{(i)}({\bf a}) > 0,\\
0 &{\rm otherwise},
\end{cases}\\
\tilde{f}_i {\bf a} &:= (a_k + \delta_{k, \min M^{(i)}({\bf a})})_{k \ge 1} 
\end{align*}
for $i \in I$ and ${\bf a} = (\ldots, a_k, \ldots, a_2, a_1) \in \z^\infty$; we denote this crystal by $\z^\infty _{\tilde{\bf i}}$.

\vspace{2mm}\begin{prop}[{see \cite[\S\S 2.4]{NZ}}]\label{star string}
The following hold.
\begin{enumerate}
\item[{\rm (1)}] There exists a unique strict embedding of crystals $\Psi_{\tilde{\bf i}}: \mathcal{B}(\infty) \hookrightarrow \z^\infty _{\tilde{\bf i}}$ such that $\Psi_{\tilde{\bf i}} (b_\infty) = (\ldots, 0, \ldots, 0, 0)$.
\item[{\rm (2)}] For $b \in \mathcal{B}(\infty)$, write $\Psi_{\tilde{\bf i}} (b) = (\ldots, a_k, \ldots, a_2, a_1)$. Then, \[b^\ast = \tilde{f}_{i_1} ^{a_1} \tilde{f}_{i_2} ^{a_2} \cdots b_\infty,\ {\it and}\ \tilde{e}_{i_{k-1}}\tilde{f}_{i_k} ^{a_k} \tilde{f}_{i_{k+1}} ^{a_{k+1}} \cdots b_\infty = 0\ {\it for\ all}\ k >1;\] namely, the sequence $(a_k)_{k \ge 1}$ is identical to the string parameterization of $b^\ast$ with respect to the sequence $(i_1, i_2, \ldots, i_k, \ldots)$. 
\end{enumerate}
\end{prop}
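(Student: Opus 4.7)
The plan is to construct $\Psi_{\tilde{\bf i}}$ by iterating Kashiwara's elementary embedding and then passing to a limit. Recall that for each $i \in I$ Kashiwara produced a strict embedding of crystals
\[
\Psi_i: \mathcal{B}(\infty) \hookrightarrow \mathcal{B}(\infty) \otimes B_i,
\]
where $B_i = \{b_i(-m) \mid m \in \z_{\ge 0}\}$ is the elementary crystal with ${\rm wt}(b_i(-m)) = -m\alpha_i$, $\varepsilon_i(b_i(-m)) = m$, $\varphi_i(b_i(-m)) = -m$, and $\varepsilon_j(b_i(-m)) = \varphi_j(b_i(-m)) = -\infty$ for $j \ne i$. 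The key input, taken as a known fact about $\Psi_i$, is the formula
\[
\Psi_i(b) = (\tilde{e}_i^\ast)^{\varepsilon_i^\ast(b)} b \otimes b_i(-\varepsilon_i^\ast(b)),
\]
which records $\tilde{f}_i^\ast$-data on the $B_i$-factor and in particular gives $\Psi_i(b_\infty) = b_\infty \otimes b_i(0)$.

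Next I would iterate $\Psi_{i_k} \circ \cdots \circ \Psi_{i_1}$ along $\tilde{\bf i}$, obtaining strict embeddings
\[
\mathcal{B}(\infty) \hookrightarrow \mathcal{B}(\infty) \otimes B_{i_k} \otimes \cdots \otimes B_{i_1}
\]
for every $k \ge 1$. Since each step strips a nonnegative integer amount of weight from the first factor and the total weight of $b \in \mathcal{B}(\infty)$ is finite, for $k \gg 0$ the first factor stabilizes at $b_\infty$; writing the remaining factors as $b_{i_k}(-a_k) \otimes \cdots \otimes b_{i_1}(-a_1)$ and identifying this with $(\ldots, 0, a_k, \ldots, a_1) \in \z^\infty$ produces the desired map $\Psi_{\tilde{\bf i}}: \mathcal{B}(\infty) \hookrightarrow \z^\infty$. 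To show that the induced crystal structure on the image matches the one defined by $\sigma_k, \sigma^{(i)}, M^{(i)}$, I would unwind the tensor product formulas for $\tilde{e}_i, \tilde{f}_i, \varepsilon_i, \varphi_i$ iteratively on $b_\infty \otimes b_{i_k}(-a_k) \otimes \cdots \otimes b_{i_1}(-a_1)$; the quantity $\sigma_k({\bf a})$ emerges naturally as $\varepsilon_i$ of the tail $b_{i_k}(-a_k) \otimes \cdots \otimes b_{i_1}(-a_1)$ at the relevant tensor position, and the max/min selection rules for $M^{(i)}$ then follow from the $\varphi_i \gtreqless \varepsilon_i$ dichotomy in the tensor product rule. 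Uniqueness of $\Psi_{\tilde{\bf i}}$ is immediate: any strict morphism sending $b_\infty$ to $(\ldots, 0, 0)$ is determined on all of $\mathcal{B}(\infty) = \{\tilde{f}_{j_1} \cdots \tilde{f}_{j_m} b_\infty\}$ by the requirement that it commute with the $\tilde{f}_j$'s.

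For part (2), the formula $\Psi_i(b) = (\tilde{e}_i^\ast)^{\varepsilon_i^\ast(b)} b \otimes b_i(-\varepsilon_i^\ast(b))$ shows that each application of an elementary embedding records an exponent of $\tilde{f}_i^\ast$. Iterating along $\tilde{\bf i}$, the coordinate $a_k$ of $\Psi_{\tilde{\bf i}}(b)$ equals $\varepsilon_{i_k}^\ast$ of the intermediate element $(\tilde{e}_{i_{k-1}}^\ast)^{a_{k-1}} \cdots (\tilde{e}_{i_1}^\ast)^{a_1} b$. Applying Kashiwara's involution $\ast$ throughout then translates this into the statement that $(a_k)_{k \ge 1}$ is the (ordinary) string parameterization of $b^\ast$ with respect to $(i_1, i_2, \ldots)$. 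The maximality part of the string parameterization, namely $\varepsilon_{i_k}((\tilde{e}_{i_{k-1}})^{a_{k-1}} \cdots (\tilde{e}_{i_1})^{a_1} b^\ast) = a_k$, is the condition $\tilde{e}_{i_{k-1}} \tilde{f}_{i_k}^{a_k} \tilde{f}_{i_{k+1}}^{a_{k+1}} \cdots b_\infty = 0$, while the expression $b^\ast = \tilde{f}_{i_1}^{a_1} \tilde{f}_{i_2}^{a_2} \cdots b_\infty$ is simply the reconstruction of $b^\ast$ from its string data.

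The main obstacle is the combinatorial bookkeeping in the second paragraph: verifying that the tensor product rule really does reproduce, for $\tilde{e}_i$ the maximal element of $M^{(i)}({\bf a})$ and for $\tilde{f}_i$ the minimal one, while simultaneously keeping track of the convention $a_j = 0$ for $j \gg 0$. This amounts to a careful induction on the tensor rank, comparing the strict versus non-strict inequalities $\varphi_i(b_1) \gtreqless \varepsilon_i(b_2)$ that distinguish the two branches of the tensor product rule; once this dictionary is set up, both the finiteness of coordinates and the precise selection of indices follow formally.
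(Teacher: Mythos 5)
The paper proves nothing here; it simply cites \cite[\S\S 2.4]{NZ}, and your sketch is a faithful reconstruction of precisely that argument (and of Kashiwara's result on the elementary embedding $\Psi_i$ that underlies it): iterate $\Psi_{i_k}\circ\cdots\circ\Psi_{i_1}$, use that $\tilde{\bf i}$ hits every $i\in I$ infinitely often to stabilize the leftmost factor at $b_\infty$, and read off $a_k=\varepsilon_{i_k}^\ast\bigl((\tilde{e}_{i_{k-1}}^\ast)^{a_{k-1}}\cdots(\tilde{e}_{i_1}^\ast)^{a_1}b\bigr)$, which after conjugating by $\ast$ is exactly the string parameterization of $b^\ast$. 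Your argument is correct modulo the acknowledged tensor-product bookkeeping that matches the induced crystal structure with the $\sigma_k$, $\sigma^{(i)}$, $M^{(i)}$ formulas, which is carried out in the cited reference.
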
\vspace{2mm}

The embedding $\Psi_{\tilde{\bf i}}$ (resp., the image $\Psi_{\tilde{\bf i}}(\mathcal{B}(\infty))$) is called the {\it Kashiwara embedding} (resp., the {\it Nakashima-Zelevinsky polyhedral realization} of $\mathcal{B}(\infty)$) with respect to $\tilde{\bf i}$. 

\vspace{2mm}\begin{rem}\normalfont\label{definition1}
By \cite[Proposition 3.2.5 (ii)]{Kas4}, it follows that $\tilde{f}_i \mathcal{B}_{w_0}(\infty) \subset \mathcal{B}_{w_0}(\infty)$ for all $i \in I$, and hence that $\mathcal{B}_{w_0}(\infty) = \mathcal{B}(\infty)$. Since ${\bf i} = (i_N, \ldots, i_1)$ is a reduced word for $w_0$, we deduce that $\Psi_{\tilde{\bf i}}(\mathcal{B}(\infty)) \subset \{(\ldots, a_k, \ldots, a_2, a_1) \in \z^\infty _{\tilde{\bf i}} \mid a_k = 0\ {\rm for\ all}\ k >N\}$; hence the image $\Psi_{\tilde{\bf i}}(\mathcal{B}(\infty))$ can be regarded as a subset of $\z^N$ by \[\Psi_{\tilde{\bf i}}(\mathcal{B}(\infty)) \hookrightarrow \z^N,\ (\ldots, 0, 0, a_N, \ldots, a_2, a_1) \mapsto (a_1, a_2, \ldots, a_N);\] note that the ordering of entries is reversed. Since the composite map $\mathcal{B}(\infty) \xrightarrow{\Psi_{\tilde{\bf i}}} \Psi_{\tilde{\bf i}}(\mathcal{B}(\infty)) \hookrightarrow \z^N$ is independent of the choice of an extension $\tilde{\bf i}$ by Proposition \ref{star string} (2), we denote this map simply by $\Psi_{\bf i}$; this is also called the {\it Kashiwara embedding} with respect to ${\bf i}$. Here we should mention that Nakashima-Zelevinsky (\cite{NZ}) called an explicit description $\Sigma_{\tilde{\bf i}}$ of $\Psi_{\tilde{\bf i}}(\mathcal{B}(\infty))$ a polyhedral realization (see \cite[Theorem 3.1]{NZ}); our terminology is slightly different from the original one.
\end{rem}\vspace{2mm}

Let us write $\z^\infty _{\tilde{\bf i}} [\lambda] := \z^\infty _{\tilde{\bf i}} \otimes R_\lambda$, and identify $\z^\infty _{\tilde{\bf i}} [\lambda]$ with $\z^\infty _{\tilde{\bf i}}$ as a set; note that their crystal structures are different. By \cite[Theorem 3.1]{N1}, there exists a unique strict embedding of crystals \[\Omega_\lambda: \mathcal{B} (\lambda) \hookrightarrow \mathcal{B}(\infty) \otimes R_\lambda\] such that $\Omega_\lambda(b_\lambda) = b_\infty \otimes r_\lambda$. Remark that $\Omega_\lambda (\mathcal{B}(\lambda)) = \{b \otimes r_\lambda \mid b \in \widetilde{\mathcal{B}}(\lambda)\}$, and that $\Omega_\lambda(\pi_\lambda (b)) = b \otimes r_\lambda$ for all $b \in \widetilde{\mathcal{B}}(\lambda)$, where $\pi_\lambda: \widetilde{\mathcal{B}}(\lambda) \xrightarrow{\sim} \mathcal{B}(\lambda)$ denotes the bijective map given in Proposition \ref{connection of crystals} (2). Next we consider an arbitrary element $w \in W$. Let ${\bf i} = (i_r, \ldots, i_1)$ be a reduced word for $w$, and extend it to a reduced word $\hat{\bf i} = (i_N, \ldots, i_{r+1}, i_r, \ldots, i_1)$ for $w_0$. In addition, we take an extension $\tilde{\bf i} = (\ldots, i_k, \ldots, i_{N+1}, i_N, \ldots, i_1)$ of $\hat{\bf i}$ as above.

\vspace{2mm}\begin{prop}[{\cite[Theorem 3.2]{N1} and \cite[Propositions 3.1, 3.3 (i)]{N2}}]\label{polyhedral realizations}
For a dominant integral weight $\lambda$, the following hold.
\begin{enumerate}
\item[{\rm (1)}] There exists a unique strict embedding of crystals \[\Psi^{(\lambda)} _{\tilde{\bf i}}: \mathcal{B} (\lambda) \lhook\joinrel\xrightarrow{\Omega_\lambda} \mathcal{B}(\infty) \otimes R_\lambda \lhook\joinrel\xrightarrow{\Psi_{\tilde{\bf i}} \otimes {\rm id}} \z^\infty _{\tilde{\bf i}} \otimes R_\lambda = \z^\infty _{\tilde{\bf i}} [\lambda]\] such that $\Psi^{(\lambda)} _{\tilde{\bf i}} (b_\lambda) = (\ldots, 0, \ldots, 0, 0) \otimes r_\lambda$.
\item[{\rm (2)}] There hold the equalities
\begin{align*}
&\Psi_{\tilde{\bf i}} (\mathcal{B}_w (\infty)) = \{(\ldots, a_k, \ldots, a_2, a_1) \in \Psi_{\tilde{\bf i}} (\mathcal{B}(\infty)) \mid a_k = 0\ {\it for\ all}\ k >r\},\\
&\Psi^{(\lambda)} _{\tilde{\bf i}}(\mathcal{B}_w (\lambda)) = \{(\ldots, a_k, \ldots, a_2, a_1) \otimes r_\lambda \in \Psi^{(\lambda)} _{\tilde{\bf i}} (\mathcal{B} (\lambda)) \mid a_k = 0\ {\it for\ all}\ k >r\}.
\end{align*}
\end{enumerate} 
\end{prop}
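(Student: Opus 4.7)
The plan is to prove the two parts separately. Part (1) reduces to a formal composition of strict embeddings. Since $\Psi^{(\lambda)}_{\tilde{\bf i}}$ is presented in the statement as the composition of $\Omega_\lambda$ with $\Psi_{\tilde{\bf i}} \otimes \mathrm{id}$, both factors are strict embeddings: $\Omega_\lambda$ by the cited \cite{N1}, and $\Psi_{\tilde{\bf i}} \otimes \mathrm{id}$ because tensoring a strict embedding with the identity preserves strictness (a routine check from the tensor-product rules for $\tilde{e}_i, \tilde{f}_i, \varepsilon_i, \varphi_i$). The composite is therefore a strict embedding, and its value at $b_\lambda$ equals $\Psi_{\tilde{\bf i}}(b_\infty) \otimes r_\lambda = (\ldots,0,\ldots,0,0) \otimes r_\lambda$ by Proposition \ref{star string}(1). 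Uniqueness follows because $\mathcal{B}(\lambda)$ is generated from $b_\lambda$ by the Kashiwara operators $\tilde{f}_i$, so any strict morphism out of $\mathcal{B}(\lambda)$ is determined by its value at $b_\lambda$.

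For the first equality in part (2), I would use Proposition \ref{star string}(2): if $\Psi_{\tilde{\bf i}}(b) = (\ldots, a_k, \ldots, a_1)$, then $(a_k)_{k\ge 1}$ is the string parameterization of $b^\ast$ along $(i_1, i_2, \ldots)$, and in particular $b^\ast = \tilde{f}_{i_1}^{a_1} \tilde{f}_{i_2}^{a_2} \cdots b_\infty$. Hence the condition ``$a_k = 0$ for all $k > r$'' is equivalent to $b^\ast = \tilde{f}_{i_1}^{a_1} \cdots \tilde{f}_{i_r}^{a_r} b_\infty$ for some $a_j \in \z_{\ge 0}$. Because $(i_r, \ldots, i_1)$ is reduced for $w$, the list $(i_1, \ldots, i_r)$ is reduced for $w^{-1}$; so by Proposition \ref{Demazure crystal}(1) this condition is equivalent to $b^\ast \in \mathcal{B}_{w^{-1}}(\infty)$. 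Combined with the identity $\mathcal{B}_w(\infty)^\ast = \mathcal{B}_{w^{-1}}(\infty)$, this yields the first equality. The proof of this last identity, which I expect to be the main obstacle, goes by induction on $\ell(w)$, using the inductive description $\mathcal{B}_{vs_j}(\infty) = \bigsqcup_{a \ge 0} (\tilde{f}_j^\ast)^a \, \mathcal{B}_v(\infty)$ for the $\ast$-Kashiwara operators (valid whenever $\ell(vs_j) = \ell(v)+1$); this is the dual analogue of Proposition \ref{Demazure crystal}(1) and is due to Kashiwara.

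For the second equality in part (2), everything follows formally from the first. For $b \in \widetilde{\mathcal{B}}(\lambda)$ one has $\Omega_\lambda(\pi_\lambda(b)) = b \otimes r_\lambda$, and by Proposition \ref{Demazure crystal}(3) the map $\pi_\lambda$ restricts to a bijection $\widetilde{\mathcal{B}}_w(\lambda) = \mathcal{B}_w(\infty) \cap \widetilde{\mathcal{B}}(\lambda) \xrightarrow{\sim} \mathcal{B}_w(\lambda)$. Therefore $\Psi^{(\lambda)}_{\tilde{\bf i}}(\mathcal{B}_w(\lambda)) = \{\Psi_{\tilde{\bf i}}(b) \otimes r_\lambda : b \in \mathcal{B}_w(\infty) \cap \widetilde{\mathcal{B}}(\lambda)\}$, and the second equality then follows from the first by restricting to the $\z^\infty_{\tilde{\bf i}}$-component.
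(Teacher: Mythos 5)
The proposition is cited in the paper (from [N1] and [N2]) rather than proved there, so I am judging your argument on its own merits.

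Your part (1) is fine: composition of strict embeddings, value at $b_\lambda$ by Proposition \ref{star string}(1), uniqueness from connectedness of $\mathcal{B}(\lambda)$ under the Kashiwara operators.

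For the first equality of part (2), however, there is a real gap, and you have misplaced where the work lies. You write that the condition ``$a_k=0$ for all $k>r$'' is \emph{equivalent} to $b^\ast = \tilde{f}_{i_1}^{c_1}\cdots\tilde{f}_{i_r}^{c_r}b_\infty$ for some $c_j\in\z_{\ge0}$, invoking Proposition \ref{Demazure crystal}(1) and $\mathcal{B}_w(\infty)^\ast=\mathcal{B}_{w^{-1}}(\infty)$. The direction $\Rightarrow$ is indeed immediate from Proposition \ref{star string}(2). But the direction $\Leftarrow$ is not: Proposition \ref{star string}(2) says that $(a_k)$ is the \emph{string parameterization} of $b^\ast$ along $(i_1,i_2,\ldots)$ --- the specific exponents obtained by applying $\tilde{e}_{i_1}^{\max},\tilde{e}_{i_2}^{\max},\ldots$ in order --- and these need not coincide with an arbitrary expression $\tilde{f}_{i_1}^{c_1}\cdots\tilde{f}_{i_r}^{c_r}b_\infty$. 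What must be proved is that for $b^\ast\in\mathcal{B}_{w^{-1}}(\infty)$ and a reduced word $(i_1,\ldots,i_r)$ for $w^{-1}$, one has $\tilde{e}_{i_r}^{\max}\cdots\tilde{e}_{i_1}^{\max}b^\ast=b_\infty$, so that the string parameterization truly vanishes beyond position $r$. This does hold, by an induction on $r$: writing $b^\ast=\tilde{f}_{i_1}^{c_1}b'$ with $b'=\tilde{f}_{i_2}^{c_2}\cdots\tilde{f}_{i_r}^{c_r}b_\infty\in\mathcal{B}_{s_{i_2}\cdots s_{i_r}}(\infty)$, one checks $\tilde{e}_{i_1}^{\max}b^\ast=\tilde{e}_{i_1}^{\max}b'$, which again lies in $\mathcal{B}_{s_{i_2}\cdots s_{i_r}}(\infty)$ because Demazure crystals are $\tilde{e}_i$-stable (Proposition \ref{Demazure crystal}(1)), and the inductive hypothesis applies. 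You neither state nor prove this step. Instead, you flag $\mathcal{B}_w(\infty)^\ast=\mathcal{B}_{w^{-1}}(\infty)$ as ``the main obstacle'' and sketch a proof of it --- but that identity is already cited in the paper from [Kas4, Proposition 3.3.1] and can simply be quoted; moreover, your sketched proof via $\mathcal{B}_{vs_j}(\infty)=\bigsqcup_a(\tilde{f}_j^\ast)^a\mathcal{B}_v(\infty)$ is itself usually derived from that very identity, so the sketch risks circularity. The genuinely nontrivial step for this proposition is the one you skipped: the compatibility of string parameterizations with Demazure crystals, which is exactly what makes the cited Nakashima result a theorem and not a remark.

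Your deduction of the second equality of part (2) from the first, via $\Omega_\lambda(\pi_\lambda(b))=b\otimes r_\lambda$ and Proposition \ref{Demazure crystal}(3), is correct.
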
\vspace{2mm}

We call $\Psi^{(\lambda)} _{\tilde{\bf i}}(\mathcal{B}_w (\lambda))$ the {\it polyhedral realization} of $\mathcal{B}_w (\lambda)$ with respect to $\tilde{\bf i}$. 

\vspace{2mm}\begin{defi}\normalfont\label{definition2}
The sets $\Psi_{\tilde{\bf i}} (\mathcal{B}_w (\infty))$ and $\Psi^{(\lambda)} _{\tilde{\bf i}}(\mathcal{B}_w (\lambda))$ can be regarded as subsets of $\z^r$ by 
\begin{align*}
\Psi_{\tilde{\bf i}} (\mathcal{B}_w (\infty)) &\hookrightarrow \z^r,\ (\ldots, 0, 0, a_r, \ldots, a_2, a_1) \mapsto (a_1, a_2, \ldots, a_r),\ {\rm and}\\ 
\Psi^{(\lambda)} _{\tilde{\bf i}}(\mathcal{B}_w (\lambda)) &\hookrightarrow \z^r,\ (\ldots, 0, 0, a_r, \ldots, a_2, a_1) \otimes r_\lambda \mapsto (a_1, a_2, \ldots, a_r).
\end{align*}
Since the composite maps $\mathcal{B}_w (\infty) \xrightarrow{\Psi_{\tilde{\bf i}}} \Psi_{\tilde{\bf i}}(\mathcal{B}_w (\infty)) \hookrightarrow \z^r$ and $\mathcal{B}_w (\lambda) \xrightarrow{\Psi_{\tilde{\bf i}} ^{(\lambda)}} \Psi^{(\lambda)} _{\tilde{\bf i}}(\mathcal{B}_w (\lambda)) \hookrightarrow \z^r$ are independent of the choices of $\hat{\bf i}$ and $\tilde{\bf i}$, we denote these maps simply by $\Psi_{\bf i} ^{(w)}$ and $\Psi_{\bf i} ^{(\lambda, w)}$, respectively; these are also called the {\it Kashiwara embeddings} with respect to ${\bf i}$.
\end{defi} 

\vspace{2mm}\begin{defi}\normalfont\label{definition3}
Define a subset $\mathcal{S}_{\bf i} ^{(\lambda, w)} \subset \z_{>0} \times \z^r$ by \[\mathcal{S}_{\bf i} ^{(\lambda, w)} := \bigcup_{k >0} \{(k, \Psi_{\bf i} ^{(k\lambda, w)}(b)) \mid b \in \mathcal{B}_w (k\lambda)\},\] and denote by $\mathcal{C}_{\bf i} ^{(\lambda, w)} \subset \r_{\ge 0} \times \r^r$ the smallest real closed cone containing $\mathcal{S}_{\bf i} ^{(\lambda, w)}$, that is, \[\mathcal{C}_{\bf i} ^{(\lambda, w)}:= \overline{\{c \cdot (k, {\bf a}) \mid c \in \r_{>0}\ {\rm and}\ (k, {\bf a}) \in \mathcal{S}_{\bf i} ^{(\lambda, w)}\}},\] where $\overline{H}$ means the closure of $H \subset \r_{\ge 0} \times \r^r$ with respect to the Euclidean topology. Now let us define a subset $\Delta_{\bf i} ^{(\lambda, w)} \subset \r^r$ by \[\Delta_{\bf i} ^{(\lambda, w)} := \{{\bf a} \in \r^r \mid (1, {\bf a}) \in \mathcal{C}_{\bf i} ^{(\lambda, w)}\}.\] The set $\Delta_{\bf i} ^{(\lambda, w)}$ is also called the {\it polyhedral realization} of $\mathcal{B}_w(\lambda)$ with respect to ${\bf i}$. 
\end{defi}\vspace{2mm}

We will prove that the set $\mathcal{S}_{\bf i} ^{(\lambda, w)}$ is identical to the set of all integral points in $\mathcal{C}_{\bf i} ^{(\lambda, w)} \setminus \{0\}$, and that the polyhedral realization $\Psi_{\bf i} ^{(\lambda, w)}(\mathcal{B}_w(\lambda))$ of $\mathcal{B}_w(\lambda)$ is identical to the set of all integral points in $\Delta_{\bf i} ^{(\lambda, w)}$. We need the following lemma.

\vspace{2mm}\begin{lem}[{see \cite[Theorem 3.1]{N1}}]\label{piecewise-linear inequality}
For the subset $\widetilde{\mathcal{B}} (\lambda)$ of $\mathcal{B}(\infty)$ defined in Proposition \ref{connection of crystals} (2), there holds the equality \[\widetilde{\mathcal{B}} (\lambda) = \{b \in \mathcal{B}(\infty) \mid \varepsilon_i ^\ast(b) \le \langle\lambda, h_i\rangle\ {\it for\ all}\ i \in I\}.\]
\end{lem}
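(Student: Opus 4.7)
The plan is to derive the lemma from the equality $\ker \pi_\lambda = K$ of two subspaces of $U_q(\mathfrak{u}^-)$, where $K$ is the $\q(q)$-subspace spanned by $\{G_q^{\rm low}(b) \mid b \in \mathcal{B}(\infty) \setminus X\}$ and $X$ denotes the right-hand side of the lemma. The equivalence between $\ker \pi_\lambda = K$ and $\widetilde{\mathcal{B}}(\lambda) = X$ is immediate from Proposition \ref{properties of global bases} (2), which tells us that $\pi_\lambda(G_q^{\rm low}(b))$ is a distinct lower global basis element of $V_q(\lambda)$ for each $b \in \widetilde{\mathcal{B}}(\lambda)$ and vanishes otherwise, so that $\{G_q^{\rm low}(b) \mid b \notin \widetilde{\mathcal{B}}(\lambda)\}$ is a $\q(q)$-basis of $\ker \pi_\lambda$. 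The main technical tool for both inclusions is the right-multiplication formula, namely the second displayed equation of Proposition \ref{properties of global bases} (4).

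For the inclusion $K \subseteq \ker \pi_\lambda$, I would fix $b \in \mathcal{B}(\infty) \setminus X$, choose $i \in I$ with $m := \varepsilon_i^\ast(b) > \langle\lambda, h_i\rangle$, and set $b_0 := (\tilde{e}_i^\ast)^m b$, so that $\varepsilon_i^\ast(b_0) = 0$ and $(\tilde{f}_i^\ast)^m b_0 = b$. Applying Proposition \ref{properties of global bases} (4) to $b_0$ with $k = m$ gives
\[G_q^{\rm low}(b_0) \cdot f_i^{(m)} = G_q^{\rm low}(b) + \sum_{\substack{b' \in \mathcal{B}(\infty);\ {\rm wt}(b') = {\rm wt}(b),\\ \varepsilon_i^\ast(b') > m}} c_{b'}\, G_q^{\rm low}(b')\]
for some $c_{b'} \in \z[q, q^{-1}]$. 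Since $m > \langle\lambda, h_i\rangle$ forces $f_i^{(m)} \cdot v_{q, \lambda} = 0$, evaluating at $v_{q, \lambda}$ turns this into a recursion expressing $\pi_\lambda(G_q^{\rm low}(b))$ as a $\q(q)$-linear combination of $\pi_\lambda(G_q^{\rm low}(b'))$'s of the same weight but strictly larger $\varepsilon_i^\ast$. The weight space $\mathcal{B}(\infty)_{{\rm wt}(b)}$ is finite, so downward induction on $\varepsilon_i^\ast$ (with the base case being the maximal value, for which the sum is empty) yields $\pi_\lambda(G_q^{\rm low}(b)) = 0$.

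For the reverse inclusion $\ker \pi_\lambda \subseteq K$, I would invoke the standard identification
\[\ker \pi_\lambda = \sum_{i \in I} U_q(\mathfrak{u}^-) \cdot f_i^{\langle\lambda, h_i\rangle + 1},\]
coming from the presentation of $V_q(\lambda)$ as the quotient of the Verma module $M_q(\lambda) \cong U_q(\mathfrak{u}^-)$ by the $U_q(\mathfrak{u}^-)$-submodule generated by the primitive vectors $f_i^{\langle\lambda, h_i\rangle + 1} \cdot v_{q, \lambda}$, $i \in I$. Since each $f_i^{\langle\lambda, h_i\rangle + 1}$ is a nonzero $\q(q)$-scalar multiple of the divided power $f_i^{(\langle\lambda, h_i\rangle + 1)}$, it suffices to check that $G_q^{\rm low}(b') \cdot f_i^{(\langle\lambda, h_i\rangle+1)} \in K$ for every $b' \in \mathcal{B}(\infty)$ and $i \in I$; but another direct application of Proposition \ref{properties of global bases} (4) expresses this product as a $\z[q, q^{-1}]$-linear combination of $G_q^{\rm low}(b'')$'s that all satisfy $\varepsilon_i^\ast(b'') \ge \langle\lambda, h_i\rangle + 1 > \langle\lambda, h_i\rangle$, hence lie in $K$.

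The main obstacle is the Verma-quotient identification $\ker \pi_\lambda = \sum_{i \in I} U_q(\mathfrak{u}^-) \cdot f_i^{\langle\lambda, h_i\rangle + 1}$, which is the one input not already set up in the preceding subsections. Its crux is the quantum highest-weight computation verifying that each vector $f_i^{\langle\lambda, h_i\rangle + 1} \cdot v_{q, \lambda}$ is annihilated by every $e_j$, ensuring that the $U_q(\mathfrak{g})$-submodule it generates coincides with the $U_q(\mathfrak{u}^-)$-submodule; granted this, the identification follows from the freeness of $M_q(\lambda)$ over $U_q(\mathfrak{u}^-)$ and may be cited from a standard reference on quantized enveloping algebras.
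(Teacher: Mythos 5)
The paper cites this lemma from \cite[Theorem 3.1]{N1} without reproducing a proof, so there is no in-paper argument to compare yours against directly; the following assesses your argument on its own. Your proof is correct. The reduction of the crystal-level equality to the subspace equality $\ker\pi_\lambda = K$ is justified exactly as you say, since $\{G_q^{\rm low}(b) \mid b \in \mathcal{B}(\infty)\}$ is a $\q(q)$-basis of $U_q(\mathfrak{u}^-)$ and Proposition \ref{properties of global bases} (2) sends it, on $\widetilde{\mathcal{B}}(\lambda)$, bijectively onto the global basis of $V_q(\lambda)$ and to zero elsewhere. In the inclusion $K \subseteq \ker\pi_\lambda$, the quantum binomial $\genfrac{[}{]}{0pt}{}{m}{m}_i = 1$ makes the leading term on the right-hand side exactly $G_q^{\rm low}(b)$, and $f_i^{(m)} v_{q,\lambda} = 0$ for $m > \langle\lambda, h_i\rangle$; since the weight space $\{b' \in \mathcal{B}(\infty) \mid {\rm wt}(b') = {\rm wt}(b)\}$ is finite, the downward induction on $\varepsilon_i^\ast$ terminates at the maximal value, where the error sum is empty. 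In the reverse inclusion, applying Proposition \ref{properties of global bases} (4) with $k = \langle\lambda, h_i\rangle + 1$ indeed produces only terms $G_q^{\rm low}(b'')$ with $\varepsilon_i^\ast(b'') \ge \varepsilon_i^\ast(b') + k \ge k > \langle\lambda, h_i\rangle$, hence in $K$.

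What distinguishes your proof from the cited source is the toolkit: Nakashima's argument for \cite[Theorem 3.1]{N1} is crystal-combinatorial, built around the strict embedding $\Omega_\lambda: \mathcal{B}(\lambda) \hookrightarrow \mathcal{B}(\infty) \otimes R_\lambda$ and the tensor-product rule, whereas yours runs entirely through the right-multiplication formula for global bases (Proposition \ref{properties of global bases} (4)) and the Verma-quotient presentation of $V_q(\lambda)$. In the context of this paper, your route has the advantage of leaning on machinery already developed in \S\S 2.2 rather than importing additional crystal combinatorics. The one ingredient you invoke without a citation is the identification $\ker\pi_\lambda = \sum_{i \in I} U_q(\mathfrak{u}^-)\, f_i^{\langle\lambda, h_i\rangle+1}$; this is standard for dominant integral $\lambda$ and can be referenced to \cite{Lus} or \cite{J2}, both already in the bibliography.
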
\vspace{2mm}

For ${\bf i}^{\rm op} := (i_1, \ldots, i_r)$, which is a reduced word for $w^{-1}$, let $\Phi_{{\bf i}^{\rm op}}: \mathcal{B}_{w^{-1}}(\infty) \hookrightarrow \z^r$ denote the corresponding string parameterization; namely, if we write $\Phi_{{\bf i}^{\rm op}} (b) = (a_1, \ldots, a_r)$ for $b \in \mathcal{B}_{w^{-1}} (\infty)$, then we have \[b = \tilde{f}_{i_1} ^{a_1} \tilde{f}_{i_2} ^{a_2} \cdots \tilde{f}_{i_r} ^{a_r} b_\infty,\ {\rm and}\ \tilde{e}_{i_{k-1}}\tilde{f}_{i_k} ^{a_k} \tilde{f}_{i_{k+1}} ^{a_{k+1}} \cdots \tilde{f}_{i_r} ^{a_r} b_\infty = 0\ {\rm for}\ 1 < k \le r.\] It follows from Proposition \ref{star string} (2) that $\Psi_{\bf i} ^{(w)} = \Phi_{{\bf i}^{\rm op}} \circ \ast$ on $\mathcal{B}_w(\infty)$; here we note that $\mathcal{B}_w(\infty)^\ast = \mathcal{B}_{w^{-1}}(\infty)$ by \cite[Proposition 3.3.1]{Kas4}. Therefore, if we denote by $\mathcal{C}_{\bf i} \subset \r^r$ the smallest real closed cone containing $\Psi_{\bf i} ^{(w)}(\mathcal{B}_w(\infty))$, then $\mathcal{C}_{\bf i}$ is identical to the {\it string cone} associated to ${\bf i}^{\rm op}$ (see \cite[\S\S 3.2]{BZ}), and the equality $\Psi_{\bf i} ^{(w)}(\mathcal{B}_w(\infty)) = \mathcal{C}_{\bf i} \cap \z^r$ holds (see \cite[Section 1]{Lit} and \cite[Proposition 3.5]{BZ}). Note that a system of explicit linear inequalities defining $\mathcal{C}_{\bf i}$ is given in \cite[Theorem 3.10]{BZ}. In particular, the real closed cone $\mathcal{C}_{\bf i}$ is a rational convex polyhedral cone, that is, there exists a finite number of rational points ${\bf a}_1, \ldots, {\bf a}_l \in \q^r$ such that $\mathcal{C}_{\bf i} = \r_{\ge 0}{\bf a}_1 + \cdots + \r_{\ge 0} {\bf a}_l$. Now for $(a_1, \ldots, a_r) \in \Psi_{\bf i} ^{(w)}(\mathcal{B}_w(\infty))$, Proposition \ref{connection of crystals} (3) implies that $\pi_\lambda(\tilde{f}_{i_1} ^{a_1} \cdots \tilde{f}_{i_r} ^{a_r} b_\infty) = \tilde{f}_{i_1} ^{a_1} \cdots \tilde{f}_{i_r} ^{a_r} b_\lambda$. Therefore, if we take a dominant integral weight $\lambda$ such that $\tilde{f}_{i_1} ^{a_1} \cdots \tilde{f}_{i_r} ^{a_r} b_\lambda \neq 0$, then we deduce from the remark following Proposition \ref{connection of crystals} that \[\varepsilon_i (\tilde{f}_{i_1} ^{a_1} \cdots \tilde{f}_{i_r} ^{a_r} b_\infty) = \varepsilon_i (\tilde{f}_{i_1} ^{a_1} \cdots \tilde{f}_{i_r} ^{a_r} b_\lambda)\] for all $i \in I$. Using this observation, we obtain the following.

\vspace{2mm}\begin{prop}[{see \cite[Remark 5.4 and Corollary 5.20]{F}}]\label{piecewise-linear for epsilon}
There exists a piecewise-linear function $\psi_{\bf i} ^{(i)}$ of $a_1, \ldots, a_r$ for $i \in I$ such that $\varepsilon_i (\tilde{f}_{i_1} ^{a_1} \cdots \tilde{f}_{i_r} ^{a_r} b_\infty) = \psi_{\bf i} ^{(i)} (a_1, \ldots, a_r)$ for all $(a_1, \ldots, a_r) \in \Psi_{\bf i} ^{(w)}(\mathcal{B}_w(\infty))$.
\end{prop}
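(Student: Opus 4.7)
The plan is to reduce the piecewise-linearity on $\mathcal{B}(\infty)$ to a concrete computation inside a highest-weight crystal $\mathcal{B}(\lambda)$ for sufficiently dominant $\lambda$, and then read off the piecewise-linearity from the tensor product rule for $\varepsilon_i$.

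First, I would reinterpret the statement in terms of the string parameterization with respect to ${\bf i}^{\rm op} = (i_1, \ldots, i_r)$. By Proposition \ref{star string}(2), for $b \in \mathcal{B}_w(\infty)$ with $\Psi_{\bf i}^{(w)}(b) = (a_1, \ldots, a_r)$, the element $\tilde{f}_{i_1}^{a_1}\cdots\tilde{f}_{i_r}^{a_r}b_\infty$ equals $b^\ast \in \mathcal{B}_{w^{-1}}(\infty)$, and $(a_1, \ldots, a_r) = \Phi_{{\bf i}^{\rm op}}(b^\ast)$ is precisely the string parameter of $b^\ast$ with respect to ${\bf i}^{\rm op}$. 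Hence the proposition is equivalent to asserting that $\varepsilon_i$, pulled back to $\z^r$ via $\Phi_{{\bf i}^{\rm op}}$, is piecewise-linear on $\Phi_{{\bf i}^{\rm op}}(\mathcal{B}_{w^{-1}}(\infty))$.

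Next, for a fixed $(a_1, \ldots, a_r)$ in the image, I would choose a dominant integral weight $\lambda$ with $\langle \lambda, h_j\rangle$ sufficiently large for all $j \in I$, so that $\tilde{f}_{i_1}^{a_1}\cdots\tilde{f}_{i_r}^{a_r}b_\lambda \neq 0$; such a $\lambda$ is guaranteed by Proposition \ref{connection of crystals}(2). The equality noted immediately before the proposition (a consequence of Proposition \ref{connection of crystals}(3) combined with $\varepsilon_i \circ \pi_\lambda = \varepsilon_i$ on $\widetilde{\mathcal{B}}(\lambda)$) then reduces the task to expressing $\varepsilon_i(\tilde{f}_{i_1}^{a_1}\cdots\tilde{f}_{i_r}^{a_r}b_\lambda)$ as a piecewise-linear function of $(a_1, \ldots, a_r)$.

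Finally, I would invoke the generalized string polytope machinery of \cite{F}, which extends the string parameterization to arbitrary (possibly non-reduced) sequences of indices and shows that the additional coordinates are piecewise-linear in the original string parameters. Applied to the extension $(i, i_1, \ldots, i_r)$ of ${\bf i}^{\rm op}$, the new leading coordinate is precisely $\varepsilon_i(b^\ast)$, and its piecewise-linear dependence on $(a_1, \ldots, a_r)$ yields the required function $\psi_{\bf i}^{(i)}$. The underlying source of piecewise-linearity is the tensor product rule
$$\varepsilon_i(c_1 \otimes \cdots \otimes c_{r+1}) = \max_{1 \le j \le r+1} \Bigl\{\varepsilon_i(c_j) - \sum_{k < j}\langle {\rm wt}(c_k), h_i\rangle\Bigr\},$$
a maximum of affine-linear functions. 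The main obstacle is controlling this dependence across the non-reduced extension; that is exactly what the generalized string polytope construction resolves, and I would invoke it as a black box via the cited \cite[Remark 5.4 and Corollary 5.20]{F}.
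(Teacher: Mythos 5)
Your proposal is correct and follows essentially the same route as the paper: reduce $\varepsilon_i$ on $\mathcal{B}(\infty)$ to $\varepsilon_i$ on $\mathcal{B}(\lambda)$ for sufficiently dominant $\lambda$ via the observation preceding the proposition (Proposition \ref{connection of crystals} (3), (4) and the remark after it), and then delegate the piecewise-linearity in the string coordinates to \cite[Remark 5.4 and Corollary 5.20]{F}. Your $\ast$-reinterpretation via $\Phi_{{\bf i}^{\rm op}}(b^\ast)$ and the appeal to the extended word $(i, i_1, \ldots, i_r)$ are a useful gloss on what \cite{F} supplies, and since the citation is used as a black box in the paper as well, the argument matches.
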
\vspace{2mm}

\begin{cor}\label{a finite union}
The following hold.
\begin{enumerate}
\item[{\rm (1)}] The set $\mathcal{S}_{\bf i} ^{(\lambda, w)}$ is identical to the set of $(k, a_1, \ldots, a_r) \in \z_{>0} \times \z^r$ such that $(a_1, \ldots, a_r) \in \mathcal{C}_{\bf i}$, and such that $\psi_{\bf i} ^{(i)} (a_1, \ldots, a_r) \le \langle k\lambda, h_i\rangle$ for all $i \in I$. In particular, the real closed cone $\mathcal{C}_{\bf i} ^{(\lambda, w)}$ is identical to the set of $(k, a_1, \ldots, a_r) \in \r_{\ge 0} \times \mathcal{C}_{\bf i}$ such that $\psi_{\bf i} ^{(i)} (a_1, \ldots, a_r) \le \langle k\lambda, h_i\rangle$ for all $i \in I$.
\item[{\rm (2)}] The real closed cone $\mathcal{C}_{\bf i} ^{(\lambda, w)}$ is a finite union of rational convex polyhedral cones, and the equality $\mathcal{S}_{\bf i} ^{(\lambda, w)} = \mathcal{C}_{\bf i} ^{(\lambda, w)} \cap (\z_{>0} \times \z^r)$ holds.
\end{enumerate}
\end{cor}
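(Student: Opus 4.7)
The plan is to reduce the description of $\mathcal{S}_{\bf i} ^{(\lambda, w)}$ to a statement about $\mathcal{B}_w(\infty)$ via the surjection $\pi_{k\lambda}$, and then invoke Lemma \ref{piecewise-linear inequality} together with Proposition \ref{piecewise-linear for epsilon} to make the constraining inequalities explicit. First I would observe that, since $\Omega_{k\lambda}(\pi_{k\lambda}(b)) = b \otimes r_{k\lambda}$ for $b \in \widetilde{\mathcal{B}}(k\lambda)$, the embedding $\Psi_{\bf i} ^{(k\lambda, w)}$ of Definition \ref{definition2} satisfies $\Psi_{\bf i} ^{(k\lambda, w)} \circ \pi_{k\lambda} = \Psi_{\bf i} ^{(w)}$ on $\widetilde{\mathcal{B}}_w(k\lambda)$. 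Combining the relation $\Psi_{\bf i} ^{(w)} = \Phi_{{\bf i}^{\rm op}} \circ \ast$ on $\mathcal{B}_w(\infty)$ with Proposition \ref{piecewise-linear for epsilon} gives, for every $b \in \mathcal{B}_w(\infty)$ with $\Psi_{\bf i} ^{(w)}(b) = (a_1, \ldots, a_r)$, the identity $\varepsilon_i ^\ast(b) = \varepsilon_i(b^\ast) = \psi_{\bf i} ^{(i)}(a_1, \ldots, a_r)$. Lemma \ref{piecewise-linear inequality} then rephrases the membership $b \in \widetilde{\mathcal{B}}_w(k\lambda) = \mathcal{B}_w(\infty) \cap \widetilde{\mathcal{B}}(k\lambda)$ as the pair of conditions $(a_1, \ldots, a_r) \in \Psi_{\bf i} ^{(w)}(\mathcal{B}_w(\infty)) = \mathcal{C}_{\bf i} \cap \z^r$ and $\psi_{\bf i} ^{(i)}(a_1, \ldots, a_r) \le \langle k\lambda, h_i\rangle$ for all $i \in I$, which yields the set-theoretic description of $\mathcal{S}_{\bf i} ^{(\lambda, w)}$ claimed in part (1).

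Next I would establish the cone description in (1). Let $D$ denote the set of $(k, a_1, \ldots, a_r) \in \r_{\ge 0} \times \mathcal{C}_{\bf i}$ with $\psi_{\bf i} ^{(i)}(a_1, \ldots, a_r) \le k \langle\lambda, h_i\rangle$ for every $i \in I$. Each $\psi_{\bf i} ^{(i)}$ is a piecewise-linear function that is positively homogeneous of degree one, being realized as a maximum of $\z$-linear forms on the string cone $\mathcal{C}_{\bf i}$; hence $D$ is a closed cone, and it contains $\mathcal{S}_{\bf i} ^{(\lambda, w)}$ by the first paragraph, so $\mathcal{C}_{\bf i} ^{(\lambda, w)} \subseteq D$. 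For the reverse inclusion, any rational point $(k, {\bf a}) \in D$ with $k > 0$ can be scaled by a positive integer $m$ so that $(mk, m{\bf a})$ has integer coordinates; homogeneity of $\psi_{\bf i} ^{(i)}$ then forces $(mk, m{\bf a}) \in \mathcal{S}_{\bf i} ^{(\lambda, w)}$, so $(k, {\bf a}) \in \mathcal{C}_{\bf i} ^{(\lambda, w)}$. Because the defining inequalities of $D$ have rational coefficients and $\mathcal{C}_{\bf i}$ is rational polyhedral, rational points with $k > 0$ are dense in $D$, and closedness of $\mathcal{C}_{\bf i} ^{(\lambda, w)}$ forces $D \subseteq \mathcal{C}_{\bf i} ^{(\lambda, w)}$.

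Finally, for part (2), I would decompose $\mathcal{C}_{\bf i}$ into the finitely many maximal rational polyhedral subcones on which every $\psi_{\bf i} ^{(i)}$ restricts to a single linear form; intersecting each with $\r_{\ge 0}$ in the first coordinate and imposing the now genuinely linear inequalities $\psi_{\bf i} ^{(i)}(a_1, \ldots, a_r) \le k\langle\lambda, h_i\rangle$ exhibits $\mathcal{C}_{\bf i} ^{(\lambda, w)}$ as a finite union of rational convex polyhedral cones. The equality $\mathcal{S}_{\bf i} ^{(\lambda, w)} = \mathcal{C}_{\bf i} ^{(\lambda, w)} \cap (\z_{>0} \times \z^r)$ is the integral-point specialization of part (1), obtained by running the argument of the first paragraph in reverse: an integer point $(k, {\bf a})$ of $\mathcal{C}_{\bf i} ^{(\lambda, w)}$ with $k > 0$ recovers $b := (\Psi_{\bf i} ^{(w)})^{-1}({\bf a}) \in \mathcal{B}_w(\infty)$ with $\varepsilon_i ^\ast(b) \le \langle k\lambda, h_i\rangle$, so $\pi_{k\lambda}(b) \in \mathcal{B}_w(k\lambda)$ realizes $(k, {\bf a}) \in \mathcal{S}_{\bf i} ^{(\lambda, w)}$. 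The main obstacle will be the density step in the second paragraph, where the rationality of $\mathcal{C}_{\bf i}$, the rationality of the coefficients of the $\psi_{\bf i} ^{(i)}$, and their homogeneity must be combined to approximate arbitrary real points of $D$ by admissible rescalings of elements of $\mathcal{S}_{\bf i} ^{(\lambda, w)}$; once that is in hand, everything else reduces to bookkeeping.
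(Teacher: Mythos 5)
Your argument is correct and follows essentially the same route as the paper: identify $\Psi_{\bf i}^{(k\lambda,w)}(\mathcal{B}_w(k\lambda))$ as the set of integer points of $\mathcal{C}_{\bf i}$ satisfying $\psi_{\bf i}^{(i)} \le \langle k\lambda, h_i\rangle$ via Proposition \ref{Demazure crystal}(3), Lemma \ref{piecewise-linear inequality}, and Proposition \ref{piecewise-linear for epsilon}, and then pass to the cone description. The only difference is that you spell out the positive homogeneity of the $\psi_{\bf i}^{(i)}$ and the density-of-rational-points step, which the paper compresses into a single phrase about piecewise-linearity; the ``main obstacle'' you flag is not really one, since $D$ is a finite union of rational polyhedral cones and the only point of $\mathcal{C}_{\bf i}$ with $\psi_{\bf i}^{(i)} = 0$ for all $i$ is the origin.
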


\begin{proof}
Part (2) is an immediate consequence of part (1); hence it is sufficient to prove part (1). By Proposition \ref{Demazure crystal} (3) and Lemma \ref{piecewise-linear inequality}, we deduce that 
\begin{align*}
\Psi_{\bf i} ^{(k\lambda, w)} (\mathcal{B}_w (k\lambda)) &= \{(a_1, \ldots, a_r) \in \Psi_{\bf i} ^{(w)}(\mathcal{B}_w(\infty)) \mid \varepsilon_i (\tilde{f}_{i_1} ^{a_1} \cdots \tilde{f}_{i_r} ^{a_r} b_\infty) \le \langle k\lambda, h_i\rangle\ {\rm for\ all}\ i \in I\}\\
&= \{(a_1, \ldots, a_r) \in \mathcal{C}_{\bf i} \cap \z^r \mid \psi_{\bf i} ^{(i)} (a_1, \ldots, a_r) \le \langle k\lambda, h_i\rangle\ {\rm for\ all}\ i \in I\}\\
&({\rm by\ Proposition}\ \ref{piecewise-linear for epsilon}\ {\rm since}\ \Psi_{\bf i} ^{(w)}(\mathcal{B}_w(\infty)) = \mathcal{C}_{\bf i} \cap \z^r)
\end{align*}
for all $k \in \z_{>0}$. This implies the first assertion of part (1). Then, since $\psi_{\bf i} ^{(i)}$ is piecewise-linear, the second assertion of part (1) follows immediately. This proves the corollary. 
\end{proof}

By the definition of $\Delta_{\bf i} ^{(\lambda, w)}$, we obtain the following from Corollary \ref{a finite union} (1).

\vspace{2mm}\begin{cor}\label{piecewise-linear inequality for delta}
The polyhedral realization $\Delta_{\bf i} ^{(\lambda, w)}$ is identical to the set of $(a_1, \ldots, a_r) \in \mathcal{C}_{\bf i}$ such that $\psi_{\bf i} ^{(i)} (a_1, \ldots, a_r) \le \langle \lambda, h_i\rangle$ for all $i \in I$.
\end{cor}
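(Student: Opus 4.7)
The plan is to deduce this corollary directly from Corollary \ref{a finite union} (1), using only the definitional relationship between $\Delta_{\bf i}^{(\lambda,w)}$ and $\mathcal{C}_{\bf i}^{(\lambda,w)}$. Recall from Definition \ref{definition3} that $\Delta_{\bf i}^{(\lambda,w)}$ is precisely the slice of the cone $\mathcal{C}_{\bf i}^{(\lambda,w)}$ at height $k=1$, namely $\Delta_{\bf i}^{(\lambda,w)} = \{{\bf a} \in \r^r \mid (1,{\bf a}) \in \mathcal{C}_{\bf i}^{(\lambda,w)}\}$. So once one has an explicit description of $\mathcal{C}_{\bf i}^{(\lambda,w)}$, the description of $\Delta_{\bf i}^{(\lambda,w)}$ is obtained by fixing the first coordinate at $1$.

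First, I would invoke Corollary \ref{a finite union} (1), which states that $\mathcal{C}_{\bf i}^{(\lambda,w)}$ equals the set of $(k, a_1, \ldots, a_r) \in \r_{\ge 0} \times \mathcal{C}_{\bf i}$ satisfying $\psi_{\bf i}^{(i)}(a_1, \ldots, a_r) \le \langle k\lambda, h_i \rangle$ for every $i \in I$. Then I would substitute $k=1$ into this description. On the left-hand side, the slice $\{{\bf a} \mid (1,{\bf a}) \in \mathcal{C}_{\bf i}^{(\lambda,w)}\}$ is by definition $\Delta_{\bf i}^{(\lambda,w)}$. On the right-hand side, the substitution $k=1$ turns the condition $(1, a_1, \ldots, a_r) \in \r_{\ge 0} \times \mathcal{C}_{\bf i}$ into simply $(a_1, \ldots, a_r) \in \mathcal{C}_{\bf i}$, while the inequality $\psi_{\bf i}^{(i)}(a_1, \ldots, a_r) \le \langle k\lambda, h_i \rangle$ becomes $\psi_{\bf i}^{(i)}(a_1, \ldots, a_r) \le \langle \lambda, h_i \rangle$. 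This yields exactly the claimed description.

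There is essentially no obstacle here; the statement is a formal consequence of Corollary \ref{a finite union} (1). The real work has already been done in establishing the piecewise-linearity of $\psi_{\bf i}^{(i)}$ (Proposition \ref{piecewise-linear for epsilon}) and in characterizing $\widetilde{\mathcal{B}}(\lambda)$ by the inequalities $\varepsilon_i^\ast(b) \le \langle \lambda, h_i \rangle$ (Lemma \ref{piecewise-linear inequality}), which together give Corollary \ref{a finite union}. The one subtle point worth verifying carefully is that the homogeneity built into the definition of $\mathcal{C}_{\bf i}^{(\lambda,w)}$ (as the closed cone generated by points $(k, \Psi_{\bf i}^{(k\lambda,w)}(b))$ for $k \in \z_{>0}$) is already fully accounted for by Corollary \ref{a finite union} (1); once this is clear, specializing to $k=1$ is immediate and the corollary follows with no further computation.
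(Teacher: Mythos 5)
Your proposal is correct and matches the paper's own (essentially one-line) argument: the paper also deduces this corollary directly from Corollary \ref{a finite union} (1) together with the definition of $\Delta_{\bf i}^{(\lambda,w)}$ as the $k=1$ slice of $\mathcal{C}_{\bf i}^{(\lambda,w)}$. Your added remark about the homogeneity of $\mathcal{C}_{\bf i}^{(\lambda,w)}$ being already encoded in Corollary \ref{a finite union} (1) is a fair sanity check but introduces nothing beyond what the paper does.
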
\vspace{2mm}

\begin{cor}\label{a finite union of polytopes}
The polyhedral realization $\Delta_{\bf i} ^{(\lambda, w)}$ is a finite union of rational convex polytopes, and the equality $\Delta_{\bf i} ^{(\lambda, w)} \cap \z^r = \Psi_{\bf i} ^{(\lambda, w)}(\mathcal{B}_w(\lambda))$ holds$;$ here, a subset $\Delta \subset \r^r$ is called a rational convex polytope if it is a convex hull of a finite number of rational points.
\end{cor}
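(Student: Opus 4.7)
The plan is to deduce the corollary quickly from Corollaries \ref{a finite union} and \ref{piecewise-linear inequality for delta}. The integer point equality $\Delta_{\bf i}^{(\lambda,w)} \cap \z^r = \Psi_{\bf i}^{(\lambda,w)}(\mathcal{B}_w(\lambda))$ is essentially a definition chase: by the definition of $\Delta_{\bf i}^{(\lambda,w)}$, a vector $\mathbf{a} \in \z^r$ lies in $\Delta_{\bf i}^{(\lambda,w)}$ exactly when $(1,\mathbf{a}) \in \mathcal{C}_{\bf i}^{(\lambda,w)}$, and Corollary \ref{a finite union} (2) identifies $\mathcal{C}_{\bf i}^{(\lambda,w)} \cap (\{1\} \times \z^r)$ with $\mathcal{S}_{\bf i}^{(\lambda,w)} \cap (\{1\} \times \z^r)$; unpacking Definition \ref{definition3} shows that this slice is precisely $\{1\} \times \Psi_{\bf i}^{(\lambda,w)}(\mathcal{B}_w(\lambda))$.

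For the finite-union-of-polytopes assertion, I would start from Corollary \ref{piecewise-linear inequality for delta}, which realizes $\Delta_{\bf i}^{(\lambda,w)}$ as the set of $\mathbf{a}$ in the string cone $\mathcal{C}_{\bf i}$ satisfying $\psi_{\bf i}^{(i)}(\mathbf{a}) \le \langle \lambda, h_i \rangle$ for all $i \in I$. Since $\mathcal{C}_{\bf i}$ is a rational convex polyhedral cone and each $\psi_{\bf i}^{(i)}$ is a homogeneous piecewise-linear function with rational coefficients, I would refine $\mathcal{C}_{\bf i}$ into finitely many rational polyhedral subcones on each of which every $\psi_{\bf i}^{(i)}$ restricts to a single linear function (taking a common refinement of the linearity domains of the finitely many $\psi_{\bf i}^{(i)}$). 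On any such subcone the defining inequalities of $\Delta_{\bf i}^{(\lambda,w)}$ become honest linear inequalities with rational coefficients, so the intersection is a rational convex polyhedron, and the collection of these polyhedra covers $\Delta_{\bf i}^{(\lambda,w)}$.

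The main obstacle will be upgrading ``rational polyhedron'' to ``rational polytope'', i.e.\ establishing boundedness. My plan is to exploit the finite-dimensionality of the Demazure module: each $b \in \mathcal{B}_w(k\lambda)$ has weight $k\lambda - \sum_j a_j \alpha_{i_j}$ with $(a_1, \ldots, a_r) = \Psi_{\bf i}^{(k\lambda, w)}(b)$, and the weights occurring in the finite-dimensional module $V_w(k\lambda)$ lie in a bounded convex region scaling linearly with $k$. Consequently there exists a constant $M > 0$ depending only on $\lambda$ such that the nonnegative entries of $\Psi_{\bf i}^{(k\lambda,w)}(b)$ all lie in $[0, kM]$ uniformly in $k \ge 1$ and $b \in \mathcal{B}_w(k\lambda)$. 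Passing to the closed cone $\mathcal{C}_{\bf i}^{(\lambda, w)}$ and slicing at $k = 1$ then confines $\Delta_{\bf i}^{(\lambda,w)}$ to the compact box $[0, M]^r$, so each of the rational polyhedra produced above is bounded, hence a rational convex polytope, completing the proof.
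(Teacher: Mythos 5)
Your proof of the integer-point equality and of the finite-union structure matches the paper: both reduce the former to Corollary \ref{a finite union}~(2) via the definitions, and both appeal to Corollary \ref{piecewise-linear inequality for delta} (that $\Delta_{\bf i}^{(\lambda,w)}$ is cut out of the string cone $\mathcal{C}_{\bf i}$ by finitely many piecewise-linear inequalities) to get a finite union of rational polyhedra; your explicit refinement into common linearity cones simply spells out what the paper leaves implicit.

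Where you genuinely diverge is the boundedness step. The paper deduces the explicit coordinate-wise bound $0 \le a_k \le \langle\lambda, h_{i_k}\rangle$ for $(a_1,\ldots,a_r) \in \Delta_{\bf i}^{(\lambda,w)}$ directly from Lemma \ref{piecewise-linear inequality} (the characterization $\widetilde{\mathcal{B}}(\lambda) = \{b \mid \varepsilon_i^\ast(b) \le \langle\lambda,h_i\rangle\ \text{for all } i\}$ applied along the string parameterization of $b^\ast$). You instead argue representation-theoretically: since $\Psi_{\bf i}^{(k\lambda,w)}(b)=(a_1,\ldots,a_r)$ forces $\mathrm{wt}(b)=k\lambda-\sum_j a_j\alpha_{i_j}$ and the weights of the finite-dimensional module $V_w(k\lambda)\subset V(k\lambda)$ lie in $k\cdot\mathrm{conv}(W\lambda)$, linear independence of the simple roots together with $a_j\ge 0$ forces $a_j\le kM$, and the closed cone (hence its slice at $k=1$) inherits this bound. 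Both arguments are correct. The paper's route gives the sharper and weight-free bound $\langle\lambda,h_{i_k}\rangle$, at the cost of unpacking the $\ast$-characterization lemma; your route is softer and self-contained, relying only on finite-dimensionality and nonnegativity, but produces an unspecified constant $M$ — which is all the statement needs.
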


\begin{proof}
By Lemma \ref{piecewise-linear inequality}, we deduce that $0 \le a_k \le \langle \lambda, h_{i_k} \rangle$, $1 \le k \le r$, for all $(a_1, \ldots, a_r) \in \Delta_{\bf i} ^{(\lambda, w)}$; therefore, the polyhedral realization $\Delta_{\bf i} ^{(\lambda, w)}$ is bounded, and hence compact. Also, by Corollary \ref{piecewise-linear inequality for delta}, the polyhedral realization $\Delta_{\bf i} ^{(\lambda, w)}$ is given by a finite number of piecewise-linear inequalities. These imply the first assertion of the corollary. Now the second assertion is an immediate consequence of Corollary \ref{a finite union} (2). This proves the corollary.
\end{proof}

In Section 4, we will prove that the polyhedral realization $\Delta_{\bf i} ^{(\lambda, w)}$ is identical to the Newton-Okounkov convex body of a Schubert variety with respect to a specific valuation, and that these are indeed rational convex polytopes. Note that we do not necessarily assume that $(\tilde{\bf i}, \lambda)$ is ample (see Definition \ref{definition of ample}); when it is ample, we obtain a system of explicit affine inequalities defining the polyhedral realization $\Delta_{\bf i} ^{(\lambda, w)}$ (see Corollary \ref{explicit description}).

\section{Newton-Okounkov convex bodies of Schubert varieties}
\subsection{Newton-Okounkov convex bodies}

Here we review the definition of Newton-Okounkov convex bodies (following \cite{HK}, \cite{Kav}, \cite{KK1}, and \cite{KK2}). Let $R$ be a $\c$-algebra without nonzero zero-divisors, and fix a total order $<$ on $\z^r$, $r \ge 1$, respecting the addition. 

\vspace{2mm}\begin{defi}\normalfont\label{def,val}
A map $v: R \setminus \{0\} \rightarrow \z^r$ is called a {\it valuation} on $R$ if the following hold: for every $\sigma, \tau \in R \setminus \{0\}$ and $c \in \c \setminus \{0\}$,
\begin{enumerate}
\item[{\rm (i)}]  $v(\sigma \cdot \tau) = v(\sigma) + v(\tau)$,
\item[{\rm (ii)}] $v(c \cdot \sigma) = v(\sigma)$, 
\item[{\rm (iii)}] $v (\sigma + \tau) \ge {\rm min} \{v(\sigma), v(\tau) \}$ unless $\sigma + \tau = 0$. 
\end{enumerate}
\end{defi}\vspace{2mm}

Note that we need to fix a total order on $\z^r$ whenever we consider a valuation. The following is a fundamental property of valuations.

\vspace{2mm}\begin{prop}[{see, for instance, \cite[Proposition 1.8]{Kav}}]\label{prop1,val}
Let $v$ be a valuation on $R$. Assume that $\sigma_1, \ldots, \sigma_s \in R \setminus \{0\}$, and that $v(\sigma_1), \ldots, v(\sigma_s)$ are all distinct.
\begin{enumerate}
\item[{\rm (1)}] The elements $\sigma_1, \ldots, \sigma_s$ are linearly independent over $\c$.
\item[{\rm (2)}] For $c_1, \ldots, c_s \in \c$ such that $\sigma := c_1 \sigma_1 + \cdots + c_s \sigma_s \neq 0$, \[v(\sigma) = \min\{v(\sigma_t ) \mid 1 \le t \le s,\ c_t \neq 0 \}.\]
\end{enumerate}
\end{prop}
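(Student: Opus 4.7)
The plan is to prove (2) first and then deduce (1) from it, since linear independence will reduce to a statement about valuations of sums matching the minimum of valuations of summands.

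The key preliminary observation is a strengthening of axiom (iii): if $\alpha,\beta\in R\setminus\{0\}$ satisfy $v(\alpha)\neq v(\beta)$ and $\alpha+\beta\neq 0$, then $v(\alpha+\beta)=\min\{v(\alpha),v(\beta)\}$. I would establish this by contradiction: assume (WLOG) that $v(\alpha)<v(\beta)$ but $v(\alpha+\beta)>v(\alpha)$, and then write $\alpha=(\alpha+\beta)+(-\beta)$. Applying axiom (iii) to this sum, together with $v(-\beta)=v(\beta)$ from axiom (ii) (taking $c=-1$), yields $v(\alpha)\geq\min\{v(\alpha+\beta),v(\beta)\}>v(\alpha)$, a contradiction.

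With this lemma in hand, part (2) follows by induction on the number $m$ of indices $t$ with $c_t\neq 0$. The case $m=1$ is immediate from axiom (ii). For the inductive step, let $t_0$ be the unique index among $\{t\mid c_t\neq 0\}$ achieving the minimum of $v(\sigma_t)$ (unique because the $v(\sigma_t)$ are distinct), and split
\[
\sigma \;=\; c_{t_0}\sigma_{t_0} \;+\; \sum_{t\neq t_0,\ c_t\neq 0} c_t\sigma_t.
\]
If the second summand is zero, we are done by axiom (ii); otherwise, induction applies to give the valuation of the second summand as $\min_{t\neq t_0,\ c_t\neq 0} v(\sigma_t)$, which is strictly greater than $v(\sigma_{t_0})=v(c_{t_0}\sigma_{t_0})$. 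The preliminary lemma then yields $v(\sigma)=v(\sigma_{t_0})$, as required.

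For part (1), suppose for contradiction that $\sigma_1,\ldots,\sigma_s$ are linearly dependent, so there exist $c_1,\ldots,c_s\in\c$, not all zero, with $\sum_t c_t\sigma_t=0$. Pick any $t_0$ with $c_{t_0}\neq 0$; then $c_{t_0}\sigma_{t_0}=-\sum_{t\neq t_0,\ c_t\neq 0}c_t\sigma_t$. The left-hand side is nonzero since $\sigma_{t_0}\neq 0$ and $R$ has no nonzero zero-divisors, so the right-hand side is nonzero as well, and part (2) applies to both sides: the left has valuation $v(\sigma_{t_0})$, while the right has valuation $\min_{t\neq t_0,\ c_t\neq 0} v(\sigma_t)$, a value different from $v(\sigma_{t_0})$ by the distinctness hypothesis. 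This contradiction completes the argument. There is no real obstacle here beyond the preliminary lemma, whose subtlety lies in turning the inequality of axiom (iii) into an equality under the distinctness assumption; everything else is bookkeeping.
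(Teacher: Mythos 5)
The paper gives no proof of this proposition, deferring to \cite[Proposition 1.8]{Kav}, so there is no in-text argument to compare against. Your proof is correct and is the standard one: the key lemma that $v(\alpha+\beta)=\min\{v(\alpha),v(\beta)\}$ whenever $v(\alpha)\neq v(\beta)$ and $\alpha+\beta\neq 0$, proved by rewriting $\alpha=(\alpha+\beta)+(-\beta)$ and applying axioms (ii) and (iii), is exactly the right strengthening of the ultrametric inequality into an equality, and the induction for part (2) together with the contradiction argument for part (1) then go through cleanly. One cosmetic remark: in part (1), the fact that $c_{t_0}\sigma_{t_0}\neq 0$ already follows from $R$ being a $\c$-vector space (multiply by $c_{t_0}^{-1}$), so you do not actually need the no-zero-divisor hypothesis at that step, though this does not affect the validity of your argument.
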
\vspace{2mm}

For ${\bf a} \in \z^r$ and a valuation $v$ on $R$ with values in $\z^r$, we set $R_{\bf a} := \{\sigma \in R\ |\ \sigma = 0\ {\rm or}\ v(\sigma) \ge {\bf a}\}$; this is a $\c$-subspace of $R$. The {\it leaf} above ${\bf a} \in \z^r$ is defined to be the quotient space $\widehat{R}_{\bf a} := R_{\bf a}/\bigcup_{{\bf a}<{\bf b}} R_{\bf b}$. A valuation $v$ is said to have {\it one-dimensional leaves} if dim$(\widehat{R}_{\bf a}) = 0\ {\rm or}\ 1$ for all ${\bf a} \in \z^r$. 

\vspace{2mm}\begin{ex}\normalfont\label{highest term valuation}
Define two lexicographic orders $<$ and $\prec$ on $\z^r$ by $(a_1, \ldots, a_r) < (a_1 ^\prime, \ldots, a_r ^\prime)$ (resp., $(a_1, \ldots, a_r) \prec (a_1 ^\prime, \ldots, a_r ^\prime)$) if and only if there exists $1 \le k \le r$ such that $a_1 = a_1 ^\prime, \ldots, a_{k-1} = a_{k-1} ^\prime$, $a_k < a_k ^\prime$ (resp., $a_r = a_r ^\prime, \ldots, a_{k+1} = a_{k+1} ^\prime$, $a_k < a_k ^\prime$). Let $\c(t_r, \ldots, t_1)$ denote the rational function field in $r$ variables. The lexicographic order $<$ (resp., $\prec$) on $\z^r$ induces a total order (denoted by the same symbol $<$ (resp., $\prec$)) on the set of all monomials in the polynomial ring $\c[t_r, \ldots, t_1]$ as follows: $t_r ^{a_r} \cdots t_1 ^{a_1} < t_r ^{a_r ^\prime} \cdots t_1 ^{a_1 ^\prime}$ (resp., $t_r ^{a_r} \cdots t_1 ^{a_1} \prec t_r ^{a_r ^\prime} \cdots t_1 ^{a_1 ^\prime}$) if and only if $(a_1, \ldots, a_r) < (a_1 ^\prime, \ldots, a_r ^\prime)$ (resp., $(a_1, \ldots, a_r) \prec (a_1 ^\prime, \ldots, a_r ^\prime)$). First we consider the case that the fixed total order on $\z^r$ is the lexicographic order $<$. Let us define a map $v: \c(t_r, \ldots, t_1) \setminus \{0\} \rightarrow \z^r$ by $v(f/g) := v(f) - v(g)$ for $f, g \in \c[t_r, \ldots, t_1] \setminus \{0\}$, and by \[v(f) := -(a_1, \ldots, a_r)\ {\rm for}\ f = c t_r ^{a_r} \cdots t_1 ^{a_1} + ({\rm lower\ terms}) \in \c[t_r, \ldots, t_1] \setminus \{0\},\] where $c \in \c \setminus \{0\}$, and by ``lower terms'' we mean a linear combination of monomials smaller than $t_r ^{a_r} \cdots t_1 ^{a_1}$ in the total order $<$. It is obvious that $v$ is a valuation with one-dimensional leaves. Since the induced order $<$ on the set of all monomials satisfies $t_1 > \cdots > t_r$, we call the valuation $v$ on $\c(t_r, \ldots, t_1)$ the {\it highest term valuation with respect to the lexicographic order} $t_1 > \cdots > t_r$. Similarly, the {\it highest term valuation} $\tilde{v}$ {\it with respect to the lexicographic order} $t_r \succ \cdots \succ t_1$ is defined to be the valuation on $\c(t_r, \ldots, t_1)$ given by \[\tilde{v}(f) := -(a_r, \ldots, a_1)\ {\rm for}\ f = c t_r ^{a_r} \cdots t_1 ^{a_1} + ({\rm lower\ terms}) \in \c[t_r, \ldots, t_1] \setminus \{0\},\] where $c \in \c \setminus \{0\}$, and by ``lower terms'' we mean a linear combination of monomials smaller than $t_r ^{a_r} \cdots t_1 ^{a_1}$ in the total order $\prec$; note that the induced order $\prec$ on the set of all monomials satisfies $t_r \succ \cdots \succ t_1$. If $r = 3$ and $f = t_1 t_2 + t_3 ^2 \in \c[t_3, t_2, t_1]$, then we have $v(f) = -(1, 1, 0)$ and $\tilde{v}(f) = -(2, 0, 0)$.
\end{ex}\vspace{2mm}

Let $V(\lambda)$ be the irreducible highest weight $G$-module with highest weight $\lambda$, and $v_\lambda \in V(\lambda)$ the highest weight vector. Given a dominant integral weight $\lambda$, we define a line bundle $\mathcal{L}_\lambda$ on $G/B$ by \[\mathcal{L}_\lambda := (G \times \c)/B,\] where $B$ acts on $G \times \c$ on the right as follows: \[(g, c) \cdot b = (g b, \lambda(b) c)\] for $g \in G$, $c \in \c$, and $b \in B$. If we define a morphism $\rho_\lambda: G/B \rightarrow \mathbb{P}(V(\lambda))$ by \[g \bmod B \mapsto \c g \cdot v_\lambda,\] then we obtain $\rho_\lambda ^\ast (\mathcal{O}(1)) = \mathcal{L}_\lambda$, where $\mathcal{O}(1)$ denotes the twisting sheaf of Serre. Hence the morphism $\rho_\lambda$ induces a $\c$-linear map \[\rho_\lambda ^\ast: H^0(\mathbb{P}(V(\lambda)), \mathcal{O}(1)) \rightarrow H^0(G/B, \mathcal{L}_\lambda).\] From the Borel-Weil theorem, we know that this is an isomorphism of $G$-modules. Here we recall that for an arbitrary finite-dimensional $G$-module $V$ over $\c$, the space $H^0(\mathbb{P}(V), \mathcal{O}(1))$ of global sections is identified with the dual $G$-module $V^\ast :=\ {\rm Hom}_\c (V, \c)$. Hence the isomorphism $\rho_\lambda ^\ast$ is regarded as an isomorphism of $G$-modules from $V(\lambda)^\ast$ to $H^0(G/B, \mathcal{L}_\lambda)$.  

\vspace{2mm}\begin{defi}\normalfont
For $w \in W$, let us denote by $X(w)$ the Zariski closure of $B \widetilde{w} B/B$ in $G/B$, where $\widetilde{w} \in G$ denotes a lift for $w$; note that the closed subvariety $X(w)$ is independent of the choice of $\widetilde{w}$. The $X(w)$ is called the {\it Schubert variety} corresponding to $w \in W$.
\end{defi}\vspace{2mm}

It is well-known that $X(w)$ is a normal projective variety of complex dimension $\ell(w)$; here, $\ell(w)$ denotes the length of $w$. Now the line bundle $\mathcal{L}_\lambda$ on $G/B$ induces a line bundle on $X(w)$, which we denote by the same symbol $\mathcal{L}_\lambda$. 

\vspace{2mm}\begin{defi}\normalfont
For $w \in W$ and a dominant integral weight $\lambda$, let $v_{w\lambda} \in V(\lambda)$ denote the (extremal) weight vector of weight $w\lambda$. Define a $B$-submodule $V_w(\lambda) \subset V(\lambda)$ by \[V_w(\lambda) := \sum_{b \in B} \c b v_{w\lambda};\] this is called the {\it Demazure module} corresponding to $w \in W$.
\end{defi}\vspace{2mm}

From the Borel-Weil theorem, we know that the space $H^0(X(w), \mathcal{L}_\lambda)$ of global sections is a $B$-module isomorphic to the dual module $V_w (\lambda)^\ast$. The {\it ring of sections} $R(\mathcal{L}_\lambda)$ is the $\z_{\ge 0}$-graded $\c$-algebra obtained from $\mathcal{L}_\lambda$ by \[R(\mathcal{L}_\lambda) := \bigoplus_{k \ge 0} H^0(X(w), \mathcal{L}_\lambda ^{\otimes k}).\] If we take a section $\tau \in H^0(X(w), \mathcal{L}_\lambda) \setminus \{0\}$, then the $\c$-vector space $R(\mathcal{L}_\lambda)_k := H^0(X(w), \mathcal{L}_\lambda ^{\otimes k})$ can be regarded as a finite-dimensional subspace of $\c(X(w))$ as follows: \[R(\mathcal{L}_\lambda)_k \hookrightarrow \c(X(w)),\ \sigma \mapsto \sigma/\tau^k.\] Hence a valuation on $\c(X(w))$ induces a map from $R(\mathcal{L}_\lambda)_k \setminus \{0\}$. 

\vspace{2mm}\begin{defi}\normalfont\label{Newton-Okounkov convex body}
For $w \in W$ and a dominant integral weight $\lambda$, let $v: \c(X(w)) \setminus\{0\} \rightarrow \z^r$ be a valuation with one-dimensional leaves with $r := \ell(w)$, and $\tau \in H^0(X(w), \mathcal{L}_\lambda) \setminus \{0\}$. Define a subset $S(X(w), \mathcal{L}_\lambda, v, \tau) \subset \z_{>0} \times \z^r$ by \[S(X(w), \mathcal{L}_\lambda, v, \tau) := \bigcup_{k>0} \{(k, v(\sigma / \tau^k)) \mid \sigma \in R(\mathcal{L}_\lambda)_k \setminus \{0\}\},\] and denote by $C(X(w), \mathcal{L}_\lambda, v, \tau) \subset \r_{\ge 0} \times \r^r$ the smallest real closed cone containing $S(X(w), \mathcal{L}_\lambda, v, \tau)$. Let us define a subset $\Delta(X(w), \mathcal{L}_\lambda, v, \tau) \subset \r^r$ by \[\Delta(X(w), \mathcal{L}_\lambda, v, \tau) := \{{\bf a} \in \r^r \mid (1, {\bf a}) \in C(X(w), \mathcal{L}_\lambda, v, \tau)\};\]
this is called the {\it Newton-Okounkov convex body} of $X(w)$ associated to $\mathcal{L}_\lambda$, $v$, and $\tau$.
\end{defi}\vspace{2mm}

From the definition of valuations, it is obvious that the set $S(X(w), \mathcal{L}_\lambda, v, \tau)$ is a semigroup. Therefore, we deduce that the real closed cone $C(X(w), \mathcal{L}_\lambda, v, \tau)$ is a closed convex cone, and that the set $\Delta(X(w), \mathcal{L}_\lambda, v, \tau)$ is a convex set. Moreover, it follows from \cite[Theorem 2.30]{KK2} that $\Delta(X(w), \mathcal{L}_\lambda, v, \tau)$ is a convex body, i.e., a compact convex set. It is well-known that $\mathcal{L}_\lambda$ is very ample if and only if $\lambda$ is a regular dominant integral weight, i.e., $\langle \lambda, h_i\rangle \in \z_{>0}$ for all $i \in I$. In this case, the real dimension of $\Delta(X(w), \mathcal{L}_\lambda, v, \tau)$ is equal to $r$ ($= \ell(w)$) by \cite[Corollary 3.2]{KK2}; this is not necessarily the case if $\lambda$ is not regular.

\vspace{2mm}\begin{rem}\normalfont
If $\mathcal{L}_\lambda$ is a very ample line bundle, then we can take a closed immersion $X(w) \hookrightarrow \mathbb{P}(H^0(X(w), \mathcal{L}_\lambda)^\ast)$ such that $\mathcal{L}_\lambda$ is the pullback of the twisting sheaf $\mathcal{O}(1)$ of Serre. Denote by $R = \bigoplus_{k \ge 0} R_k$ the corresponding homogeneous coordinate ring. In many literatures including \cite{HK}, Newton-Okounkov convex bodies are defined by using $R$ instead of $R(\mathcal{L}_\lambda)$. However, since the Schubert variety $X(w)$ is normal, we deduce from \cite[Chap.\ I\hspace{-.1em}I,  Ex.\ 5.14]{Hart} that $R_k = R(\mathcal{L}_\lambda)_k$ for all $k \gg 0$. In addition, since $S(X(w), \mathcal{L}_\lambda, v, \tau)$ is a semigroup, the real closed cone $C(X(w), \mathcal{L}_\lambda, v, \tau)$ is identical to the smallest real closed cone containing \[\bigcup_{k>k^\prime} \{(k, v(\sigma / \tau^k)) \mid \sigma \in R(\mathcal{L}_\lambda)_k \setminus \{0\}\}\] for $k^\prime \gg 0$. Therefore, $R$ and $R(\mathcal{L}_\lambda)$ are interchangeable in the definition of Newton-Okounkov convex bodies.
\end{rem}\vspace{2mm}

\begin{rem}\normalfont\label{independence}
If we take another section $\tau^\prime \in H^0 (X(w), \mathcal{L}_\lambda) \setminus \{0\}$, then $S(X(w), \mathcal{L}_\lambda, v, \tau^\prime)$ is the shift of $S(X(w), \mathcal{L}_\lambda, v, \tau)$ by $k v(\tau/\tau^\prime)$ in $\{k\} \times \z^r$. Hence it follows that $\Delta(X(w), \mathcal{L}_\lambda, v, \tau^\prime) = \Delta(X(w), \mathcal{L}_\lambda, v, \tau) + v(\tau/\tau^\prime)$. Thus, the Newton-Okounkov convex body $\Delta(X(w), \mathcal{L}_\lambda, v, \tau)$ does not essentially depend on the choice of $\tau \in H^0 (X(w), \mathcal{L}_\lambda) \setminus \{0\}$.
\end{rem}

\subsection{Highest term valuations}

In this subsection, we introduce a specific valuation on $\c(X(w))$, which we mainly use. Let $P_i$ (resp., $U^- _i$) denote the minimal parabolic subgroup (resp., the opposite root subgroup) corresponding to an index $i \in I$, and set $\mathfrak{u}^- _i :=\ {\rm Lie}(U^- _i)$. For a reduced word ${\bf i} = (i_r, \ldots, i_1)$ for $w \in W$, define the {\it Bott-Samelson variety} $Z_{\bf i}$ by \[Z_{\bf i} := P_{i_r} \times \cdots \times P_{i_1} / B^r,\] where $B^r$ acts on $P_{i_r} \times \cdots \times P_{i_1}$ on the right by \[(p_r, \ldots, p_1)\cdot(b_r, \ldots, b_1) := (p_r b_r, b_r ^{-1} p_{r-1} b_{r-1}, \ldots, b_2 ^{-1} p_1 b_1)\] for $p_r \in P_{i_r}, \ldots, p_1 \in P_{i_1}$, and $b_r, \ldots, b_1 \in B$. It is well-known that the product map 
\begin{align}
\begin{aligned}
Z_{\bf i} \rightarrow G/B,\ (p_r, \ldots, p_1) \bmod B^r \mapsto p_r \cdots p_1 \bmod B,
\end{aligned}
\end{align} 
induces a birational morphism onto the Schubert variety $X(w) \subset G/B$; hence a valuation on $\c(X(w))$ can be identified with a valuation on $\c(Z_{\bf i})$. For the identity element $e \in G$, regard $U_{i_r} ^- \times \cdots \times U_{i_1} ^-$ as an affine open neighborhood of $(e, \ldots, e) \bmod B^r$ in $Z_{\bf i}$ by:
\begin{align}
\begin{aligned}
U_{i_r} ^- \times \cdots \times U_{i_1} ^- \hookrightarrow Z_{\bf i},\ (u_r, \ldots, u_1) \mapsto (u_r, \ldots, u_1) \bmod B^r.
\end{aligned}
\end{align}
By using the isomorphism of varieties $\c^r \xrightarrow{\sim} U_{i_r} ^- \times \cdots \times U_{i_1} ^-$, $(t_r, \ldots, t_1) \mapsto (\exp(t_r F_{i_r}), \ldots, \exp(t_1 F_{i_1}))$, we identify the function field $\c(Z_{\bf i}) = \c(U_{i_r} ^- \times \cdots \times U_{i_1} ^-)$ with the rational function field $\c(t_r, \ldots, t_1)$. Now we define a valuation $v_{\bf i}$ on $\c(X(w))$ to be the highest term valuation on $\c(t_r, \ldots, t_1)$ with respect to the lexicographic order $t_1 > \cdots > t_r$ (see Example \ref{highest term valuation}). In the following of this section, we describe $v_{\bf i}$ in terms of the Chevalley generators. If we set $w_{\ge k} := s_{i_r} s_{i_{r-1}} \cdots s_{i_k}$ for $1 \le k \le r$, then we obtain a sequence of subvarieties \[X(w_{\ge r}) \subset X(w_{\ge r-1}) \subset \cdots \subset X(w_{\ge 1}) = X(w).\] Note that the product map (1) induces a surjective birational morphism $Z_{(i_r, \ldots, i_k)} \twoheadrightarrow X(w_{\ge k})$, and that the open immersion (2) induces an open immersion $U_{i_r} ^- \times \cdots \times U_{i_k} ^- \hookrightarrow Z_{(i_r, \ldots, i_k)}$, where $Z_{(i_r, \ldots, i_k)}$ denotes the Bott-Samelson variety corresponding to the reduced word $(i_r, \ldots, i_k)$; hence the function field $\c(X(w_{\ge k}))$ is identified with the rational function field $\c(t_r, \ldots, t_k)$. Consider the right action of $U_{i_k} ^-$ on $U_{i_r} ^- \times \cdots \times U_{i_k} ^-$ given by $(u_r, \ldots, u_k) \cdot u = (u_r, \ldots, u_k u)$ for $u, u_k \in U_{i_k} ^-, u_{k+1} \in U_{i_{k+1}} ^-, \ldots, u_r \in U_{i_r} ^-$; this induces right actions of $U_{i_k} ^-$ and $\mathfrak{u}_{i_k} ^-$ on $\c[t_r, \ldots, t_k] = \c[U_{i_r} ^- \times \cdots \times U_{i_k} ^-]$, which are given by:
\begin{align}\label{derivation}
&f(t_r, \ldots, t_k) \cdot \exp(s F_{i_k}) = f(t_r, \ldots, t_{k+1}, t_k - s),\ {\rm and\ hence} \nonumber\\ 
&f(t_r, \ldots, t_k) \cdot F_{i_k} = - \frac{\partial}{\partial t_k} f(t_r, \ldots, t_k)
\end{align}
for $s \in \c$ and $f(t_r, \ldots, t_k) \in \c[t_r, \ldots, t_k]$. 

\vspace{2mm}\begin{prop}\label{valuation Chevalley}
For $f(t_r, \ldots, t_1) \in \c[t_r, \ldots, t_1]$, write $v_{\bf i} (f(t_r, \ldots, t_1)) = -(a_1, \ldots, a_r)$. Then, there hold the equalities
\begin{align*}
&a_1 = \max\{a \in \z_{\ge 0} \mid f(t_r, \ldots, t_1) \cdot F_{i_1} ^a \neq 0\},\\
&a_2 = \max\{a \in \z_{\ge 0} \mid (f(t_r, \ldots, t_1) \cdot F_{i_1} ^{a_1})|_{X(w_{\ge 2})} \cdot F_{i_2} ^a \neq 0\},\\
&\ \vdots\\
&a_r = \max\{a \in \z_{\ge 0} \mid (\cdots ((f(t_r, \ldots, t_1) \cdot F_{i_1} ^{a_1})|_{X(w_{\ge 2})} \cdot F_{i_2} ^{a_2}) \cdots)|_{X(w_{\ge r})} \cdot F_{i_r} ^a \neq 0\}.
\end{align*}
\end{prop}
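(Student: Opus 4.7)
The plan is to proceed by induction on $r = \ell(w)$, using equation (\ref{derivation}) to translate the right action of each Chevalley generator $F_{i_k}$ into the partial differentiation $-\partial/\partial t_k$ on the relevant polynomial ring. When $r = 1$, a polynomial $f \in \c[t_1]$ satisfies $v_{\bf i}(f) = -\deg(f)$, and $f \cdot F_{i_1}^a = (-\partial/\partial t_1)^a f$ is nonzero if and only if $a \le \deg(f)$, which verifies the sole formula.

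For the inductive step, I would write $f = \sum_{b \ge 0} t_1^b f_b(t_r, \ldots, t_2)$ with $f_b \in \c[t_r, \ldots, t_2]$. Since the lexicographic order on $\z^r$ defining $v_{\bf i}$ compares the first coordinate first, the exponent $a_1$ of $t_1$ in the leading monomial of $f$ is simply the largest $b$ with $f_b \ne 0$, and because $f \cdot F_{i_1}^a = (-1)^a a! \sum_b \binom{b}{a} t_1^{b-a} f_b$, this is nonzero exactly when $a \le a_1$. In particular, $f \cdot F_{i_1}^{a_1} = (-1)^{a_1} a_1!\, f_{a_1}(t_r, \ldots, t_2)$ is $t_1$-independent. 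The key algebraic observation is then that $f_{a_1} \in \c[t_r, \ldots, t_2]$ has valuation $-(a_2, \ldots, a_r)$ with respect to the highest term valuation associated to the lex order $t_2 > \cdots > t_r$: a monomial $t_2^{b_2} \cdots t_r^{b_r}$ occurs in $f_{a_1}$ iff $t_1^{a_1} t_2^{b_2} \cdots t_r^{b_r}$ occurs in $f$, and the maximality of $(a_1, \ldots, a_r)$ among exponents in $f$ forces $(b_2, \ldots, b_r) \le (a_2, \ldots, a_r)$ in lex, with equality attained.

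Next I would verify that under the identification $\c(X(w_{\ge 2})) = \c(t_r, \ldots, t_2)$, the restriction of $f \cdot F_{i_1}^{a_1}$ to $X(w_{\ge 2})$ coincides with $(-1)^{a_1} a_1!\, f_{a_1}(t_r, \ldots, t_2)$ itself. Since this function is already $t_1$-independent on the open chart $U_{i_r}^- \times \cdots \times U_{i_1}^-$, it suffices to observe that the locus $\{t_1 = 0\}$ in this chart maps, under the product map to $G/B$, into a dense subset of $X(w_{\ge 2})$; this is the restriction at $u_1 = e$ of the birational morphism from the open chart of $Z_{(i_r, \ldots, i_2)}$ onto $X(w_{\ge 2})$. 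Applying the induction hypothesis to the reduced word $(i_r, \ldots, i_2)$ for $w_{\ge 2}$ and to the polynomial $f_{a_1}$ then yields the formulas for $a_2, \ldots, a_r$.

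The main technical obstacle is the geometric compatibility in the previous step: correctly identifying the restriction of a regular function on the affine chart of $X(w)$ to $X(w_{\ge 2})$ with the substitution $t_1 = 0$ in the coordinate ring, which requires compatibility between the Bott-Samelson varieties $Z_{\bf i}$ and $Z_{(i_r, \ldots, i_2)}$, their affine charts defined by the $U_{i_k}^-$, and the birational morphisms to the respective Schubert varieties. Once that identification is established, the remainder of the argument is driven cleanly by the differentiation formula and the extraction of leading coefficients.
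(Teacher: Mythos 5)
Your proof is essentially the paper's own argument: identify $a_1$ with the $t_1$-degree of $f$ via the derivative formula~(\ref{derivation}), observe that $f\cdot F_{i_1}^{a_1}$ is $t_1$-independent and that restriction to $X(w_{\ge 2})$ on the given charts is substitution $t_1=0$, then induct on $r=\ell(w)$ with the reduced word $(i_r,\ldots,i_2)$. Your write-up is a bit more explicit than the paper's (extracting the leading coefficient $f_{a_1}$ and spelling out why $v_{(i_r,\ldots,i_2)}(f_{a_1}) = -(a_2,\ldots,a_r)$), but the decomposition, the key lemma, and the inductive structure are the same.
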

\begin{proof}
It follows from the definition of $v_{\bf i}$ that $a_1$ is equal to the degree of $f(t_r, \ldots, t_1)$ with respect to the variable $t_1$. Therefore, we deduce that
\begin{align*}
a_1 &= \max\{a \in \z_{\ge 0} \mid \frac{\partial^a}{\partial t_1 ^a} f(t_r, \ldots, t_1) \neq 0\}\\
&= \max\{a \in \z_{\ge 0} \mid f(t_r, \ldots, t_1) \cdot F_{i_1} ^a \neq 0\}\quad ({\rm by\ equation}\ (\ref{derivation})).
\end{align*}
We remark that since the polynomial $f(t_r, \ldots, t_1) \cdot F_{i_1} ^{a_1}$ does not contain the variable $t_1$, the restriction $(f(t_r, \ldots, t_1) \cdot F_{i_1} ^{a_1})|_{X(w_{\ge 2})}$ is identical to $f(t_r, \ldots, t_1) \cdot F_{i_1} ^{a_1} \in \c[t_r, \ldots, t_2]$ as a polynomial in the variables $t_r, \ldots, t_2$; here the restriction map $\c(X(w)) \rightarrow \c(X(w_{\ge 2}))$ is given by $t_1 \mapsto 0$. Hence we see from the definition of $v_{\bf i}$ that $v_{(i_r, \ldots, i_2)}((f(t_r, \ldots, t_1) \cdot F_{i_1} ^{a_1})|_{X(w_{\ge 2})}) = -(a_2, \ldots, a_r)$, where $v_{(i_r, \ldots, i_2)}$ denotes the valuation on $\c(X(w_{\ge 2}))$ defined to be the highest term valuation on $\c(t_r, \ldots, t_2)$ $(= \c(U_{i_r} ^- \times \cdots \times U_{i_2} ^-))$ with respect to the lexicographic order $t_2 > \cdots > t_r$. By induction on $r = \ell(w)$, the assertion of the proposition follows.
\end{proof}

\section{Main result}
\subsection{Statement of the main result}

If we regard $\c$ as a $\q[q, q^{-1}]$-module by the natural $\q$-algebra homomorphism $\q[q, q^{-1}] \rightarrow \c$, $q \mapsto 1$, then the $\c$-vector space $V_q ^\q (\lambda) \otimes_{\q[q, q^{-1}]} \c$ has a $\mathfrak{g}$-module structure given by \[E_i(v \otimes c) := (e_i v) \otimes c,\ F_i(v \otimes c) := (f_i v) \otimes c,\ h_i(v \otimes c) := \left(\frac{t_i - t_i ^{-1}}{q_i - q_i ^{-1}}v\right) \otimes c\] for $i \in I$, $v \in V_q ^\q (\lambda)$, and $c \in \c$; this $\mathfrak{g}$-module is isomorphic to $V(\lambda)$ (see, for instance, \cite[Lemma 5.14]{J2}). We denote by $G^{\rm low} _\lambda (b) \in V(\lambda)$ the specialization of $G^{\rm low} _{q, \lambda} (b)$ at $q = 1$, i.e., $G^{\rm low} _\lambda (b) := G^{\rm low} _{q, \lambda} (b) \otimes 1 \in V_q ^\q (\lambda) \otimes_{\q[q, q^{-1}]} \c \simeq V(\lambda)$. Then, Proposition \ref{Demazure crystal} (4) implies that the set $\{G^{\rm low} _\lambda (b) \mid b \in \mathcal{B}_w (\lambda)\}$ forms a $\c$-basis of the Demazure module $V_w(\lambda)$. Also, we denote by $\{G^{\rm up} _{\lambda, w} (b) \mid b \in \mathcal{B}_w (\lambda)\} \subset V_w (\lambda) ^\ast = H^0(X(w), \mathcal{L}_\lambda)$ the dual basis, and by $\tau_\lambda \in V(\lambda) ^\ast = H^0(G/B, \mathcal{L}_\lambda)$ the lowest weight vector such that $\tau_\lambda(v_\lambda) = 1$; by restricting the section $\tau_\lambda$, we obtain a section in $H^0(X(w), \mathcal{L}_\lambda)$, which we denote by the same symbol $\tau_\lambda$. The following is the main result of this paper.

\vspace{2mm}\begin{thm}\label{main thm}
Let ${\bf i} \in I^r$ be a reduced word for $w \in W$, and $\lambda$ a dominant integral weight. Then, $\Psi_{\bf i} ^{(\lambda, w)} (b) = -v_{\bf i}(G^{\rm up} _{\lambda, w} (b)/\tau_\lambda)$ for all $b \in \mathcal{B}_w (\lambda)$.
\end{thm}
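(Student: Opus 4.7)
The plan is to prove the theorem by induction on $r=\ell(w)$; the base case $r=0$ is vacuous. For the inductive step, set ${\bf i}':=(i_r,\ldots,i_2)$, a reduced word for $w_{\ge 2}:=ws_{i_1}$ of length $r-1$. Proposition~\ref{valuation Chevalley} decomposes the computation of $v_{\bf i}(G^{\rm up}_{\lambda,w}(b)/\tau_\lambda)=-(a_1,\ldots,a_r)$ into two pieces: $a_1$ is the degree in $t_1$ of the polynomial $G^{\rm up}_{\lambda,w}(b)/\tau_\lambda\in\c[t_r,\ldots,t_1]$, and $(a_2,\ldots,a_r)=-v_{{\bf i}'}(\overline{\sigma})$, where $\overline{\sigma}:=((G^{\rm up}_{\lambda,w}(b)/\tau_\lambda)\cdot F_{i_1}^{a_1})|_{X(w_{\ge 2})}$.

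The first step is to exhibit $G^{\rm up}_{\lambda,w}(b)/\tau_\lambda$ on the Bott-Samelson chart $\c^r$ as a matrix coefficient. Because $F_i\tau_\lambda=0$ in $V(\lambda)^\ast$ (as $\tau_\lambda$ is the lowest-weight functional), one has $\tau_\lambda(\exp(t_rF_{i_r})\cdots\exp(t_1F_{i_1})v_\lambda)=\tau_\lambda(v_\lambda)=1$, and hence
\[
(G^{\rm up}_{\lambda,w}(b)/\tau_\lambda)(t_r,\ldots,t_1)=\langle G^{\rm up}_{\lambda,w}(b),\exp(t_rF_{i_r})\cdots\exp(t_1F_{i_1})v_\lambda\rangle.
\]
The coefficient of $t_r^{c_r}\cdots t_1^{c_1}$ equals $\langle G^{\rm up}_{\lambda,w}(b),F_{i_r}^{(c_r)}\cdots F_{i_1}^{(c_1)}v_\lambda\rangle$, i.e., the coefficient of $G^{\rm low}_\lambda(b)$ in the lower-global-basis expansion of $F_{i_r}^{(c_r)}\cdots F_{i_1}^{(c_1)}v_\lambda$.

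Now writing $\Psi_{\bf i}^{(\lambda,w)}(b)=(a_1,\ldots,a_r)$ and letting $\tilde{b}\in\widetilde{\mathcal{B}}_w(\lambda)\subset\mathcal{B}(\infty)$ be the lift of $b$, Proposition~\ref{star string} gives $a_1=\varepsilon_{i_1}^\ast(\tilde{b})$. Set $\tilde{b}':=(\tilde{e}_{i_1}^\ast)^{a_1}\tilde{b}$ and $b':=\pi_\lambda(\tilde{b}')$. One verifies that $(\tilde{b}')^\ast=\tilde{e}_{i_1}^{a_1}\tilde{b}^\ast=\tilde{f}_{i_2}^{a_2}\cdots\tilde{f}_{i_r}^{a_r}b_\infty$ using the string condition for $\tilde{b}^\ast$, so that $\tilde{b}'\in\mathcal{B}_{w_{\ge 2}}(\infty)$ (via the $\ast$-compatibility $\mathcal{B}_w(\infty)^\ast=\mathcal{B}_{w^{-1}}(\infty)$ applied to $(w_{\ge 2})^{-1}$) and $\Psi_{{\bf i}'}^{(\lambda,w_{\ge 2})}(b')=(a_2,\ldots,a_r)$. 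The key claim is
\[
\overline{\sigma}=(-1)^{a_1}\,a_1!\cdot G^{\rm up}_{\lambda,w_{\ge 2}}(b')/\tau_\lambda;
\]
granting it, the theorem follows by Proposition~\ref{valuation Chevalley} and the inductive hypothesis for $(w_{\ge 2},{\bf i}',b')$.

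To prove the displayed equality, lift the computation to $U_q(\mathfrak{u}^-)$ via $\pi_\lambda$: the pairing $\langle G^{\rm up}_{\lambda,w}(b),F_{i_r}^{c_r}\cdots F_{i_2}^{c_2}F_{i_1}^{a_1}v_\lambda\rangle$ equals $a_1!$ times the coefficient of $G^{\rm low}_q(\tilde{b})$ in $(f_{i_r}^{c_r}\cdots f_{i_2}^{c_2})\cdot f_{i_1}^{(a_1)}$ at $q=1$. Expanding $f_{i_r}^{c_r}\cdots f_{i_2}^{c_2}=\sum_{\tilde{b}''}\gamma_{\tilde{b}''}G^{\rm low}_q(\tilde{b}'')$ and applying the second formula of Proposition~\ref{properties of global bases}~(4), contributions to the coefficient of $G^{\rm low}_q(\tilde{b})$ come either from the main term $(\tilde{f}_{i_1}^\ast)^{a_1}\tilde{b}''=\tilde{b}$, or from correction terms requiring $\varepsilon_{i_1}^\ast(\tilde{b})>\varepsilon_{i_1}^\ast((\tilde{f}_{i_1}^\ast)^{a_1}\tilde{b}'')=\varepsilon_{i_1}^\ast(\tilde{b}'')+a_1$. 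Since $\varepsilon_{i_1}^\ast(\tilde{b})=a_1$, the correction forces $\varepsilon_{i_1}^\ast(\tilde{b}'')<0$, which is impossible; hence only the main term survives, with $\tilde{b}''=\tilde{b}'$ and binomial coefficient $\binom{0+a_1}{a_1}=1$, and summing over $c_2,\ldots,c_r$ reassembles $\overline{\sigma}$ as the claimed scalar multiple of $G^{\rm up}_{\lambda,w_{\ge 2}}(b')/\tau_\lambda$. The main obstacle is the technical verification that $\tilde{b}'\in\widetilde{\mathcal{B}}_{w_{\ge 2}}(\lambda)$, so that $b'$ is a bona fide element of $\mathcal{B}_{w_{\ge 2}}(\lambda)$; this combines the Demazure-crystal description (Proposition~\ref{Demazure crystal}), the characterization $\widetilde{\mathcal{B}}(\lambda)=\{\tilde{c}\mid\varepsilon_j^\ast(\tilde{c})\le\langle\lambda,h_j\rangle\text{ for all }j\}$ of Lemma~\ref{piecewise-linear inequality}, and a careful analysis of how $(\tilde{e}_{i_1}^\ast)^{a_1}$ affects $\varepsilon_j^\ast$ for $j\neq i_1$.
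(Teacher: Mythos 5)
Your strategy of inducting on $r=\ell(w)$ via Proposition~\ref{valuation Chevalley} and using Proposition~\ref{properties of global bases}~(4) to peel off the factor $F_{i_1}^{(a_1)}$ is in the same spirit as the paper's Lemma~\ref{lemma3}. However, the step you defer as a ``technical verification'' --- that $\tilde{b}' := (\tilde{e}_{i_1}^\ast)^{a_1}\tilde{b}$ lies in $\widetilde{\mathcal{B}}_{w_{\ge 2}}(\lambda)$ --- is in fact \emph{false} in general, and the inductive step collapses without it. Concretely, take $G=SL_3$, ${\bf i}=(1,2,1)$, $\lambda=\varpi_1$, and $b=\tilde{f}_2\tilde{f}_1 b_{\varpi_1}\in\mathcal{B}(\varpi_1)$, with lift $\tilde{b}=\tilde{f}_2\tilde{f}_1 b_\infty$. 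Then $\tilde{b}^\ast=\tilde{f}_1\tilde{f}_2 b_\infty$, so $\Psi_{\bf i}^{(\varpi_1,w_0)}(b)=(1,1,0)$ and $a_1=\varepsilon_{i_1}^\ast(\tilde{b})=1$; hence $\tilde{b}'=\tilde{e}_1^\ast\tilde{b}=\tilde{f}_2 b_\infty$. But $\pi_{\varpi_1}(\tilde{f}_2 b_\infty)=\tilde{f}_2 b_{\varpi_1}=0$ since $\langle\varpi_1,h_2\rangle=0$, i.e., $\varepsilon_2^\ast(\tilde{b}')=1>0=\langle\varpi_1,h_2\rangle$, so $\tilde{b}'\notin\widetilde{\mathcal{B}}(\varpi_1)$ by Lemma~\ref{piecewise-linear inequality}. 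Thus $b'=\pi_\lambda(\tilde{b}')$ does not exist. Meanwhile $G^{\rm up}_{\varpi_1}(b)/\tau_{\varpi_1}=t_1t_2$ on the chart, so $\overline{\sigma}=((t_1t_2)\cdot F_1)|_{t_1=0}=-t_2\neq 0$, and indeed $-v_{{\bf i}'}(\overline{\sigma})=(1,0)$ correctly supplies $(a_2,a_3)$, yet $-t_2$ is not proportional to any $G^{\rm up}_{\varpi_1,w_{\ge 2}}(b')/\tau_{\varpi_1}$ (those are spanned by $1$ and $t_3$); your key displayed identity therefore fails.

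The issue is structural: operating with $\tilde{e}_{i_1}^\ast$ can increase $\varepsilon_j^\ast$ for $j\ne i_1$, so the constraint $\varepsilon_j^\ast\le\langle\lambda,h_j\rangle$ cutting out $\widetilde{\mathcal{B}}(\lambda)$ is not preserved, and the polynomial $\overline{\sigma}$ need not be the restriction to $X(w_{\geq 2})$ of any global section of $\mathcal{L}_\lambda$. The fix is exactly the route the paper takes: lift everything once and for all to $U(\mathfrak{u}^-)^\ast_{\rm gr}\cong\c[U^-]$ and work with $G^{\rm up}(\tilde{b})$ indexed by $\mathcal{B}(\infty)$ rather than with $G^{\rm up}_{\lambda,w_{\ge 2}}(b')$. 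There, your coefficient argument with Proposition~\ref{properties of global bases}~(4) goes through unconditionally (this is precisely Lemma~\ref{lemma3}, giving $G^{\rm up}(\tilde{b})\cdot F_{i_1}^{(a_1)}=(-1)^{a_1}G^{\rm up}((\tilde{e}_{i_1}^\ast)^{a_1}\tilde{b})$ with no normalization constraint); one proves $\Psi_{\bf i}(\tilde{b})=-v_{\bf i}(G^{\rm up}(\tilde{b}))$ for $w=w_0$ by iterating this (Proposition~\ref{Verma case}), and only afterwards compares with the line bundle via Lemma~\ref{lemma2} and passes to general $w$ by restriction, using that the trailing coordinates of $\Psi_{\hat{\bf i}}^{(\lambda)}(b)$ vanish for $b\in\mathcal{B}_w(\lambda)$. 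Your proposal does correctly identify the matrix-coefficient formula and the relevant global-basis identity, but the decision to keep the Demazure modules $V_{w_{\ge 2}}(\lambda)$ and $\widetilde{\mathcal{B}}(\lambda)$ in play throughout the induction, rather than passing to $U(\mathfrak{u}^-)^\ast_{\rm gr}$, is the step that cannot be repaired as stated.
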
\vspace{2mm}

We give a proof of this theorem in the next subsection.

\vspace{2mm}\begin{cor}\label{corollary1}
Let $\omega: \r \times \r^r \xrightarrow{\sim} \r \times \r^r$ be the linear automorphism given by $\omega(k, {\bf a}) = (k, -{\bf a})$. Then, $\mathcal{S}_{\bf i} ^{(\lambda, w)} = \omega(S(X(w), \mathcal{L}_\lambda, v_{\bf i}, \tau_\lambda))$, $\mathcal{C}_{\bf i} ^{(\lambda, w)} = \omega(C(X(w), \mathcal{L}_\lambda, v_{\bf i}, \tau_\lambda))$, and $\Delta_{\bf i} ^{(\lambda, w)} = -\Delta(X(w), \mathcal{L}_\lambda, v_{\bf i}, \tau_\lambda)$.
\end{cor}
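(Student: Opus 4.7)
The plan is to deduce Corollary \ref{corollary1} essentially as a formal consequence of Theorem \ref{main thm} applied to $k\lambda$ for each $k > 0$, by converting between the basis $\{G^{\rm up}_{k\lambda, w}(b) \mid b \in \mathcal{B}_w(k\lambda)\}$ of $R(\mathcal{L}_\lambda)_k$ and arbitrary nonzero sections.

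First, I would verify the normalization identity $\tau_\lambda^k = \tau_{k\lambda}$, where $\tau_\lambda^k$ is viewed as a section in $H^0(X(w), \mathcal{L}_{k\lambda})$ via the natural isomorphism $\mathcal{L}_\lambda^{\otimes k} \simeq \mathcal{L}_{k\lambda}$. Both are weight vectors of weight $-k\lambda$ in the one-dimensional weight space of $V_w(k\lambda)^\ast$, and the normalizations $\tau_\lambda(v_\lambda) = 1$ and $\tau_{k\lambda}(v_{k\lambda}) = 1$, together with the identification $v_{k\lambda} = v_\lambda^{\otimes k}$ inside the Cartan component $V(k\lambda) \subset V(\lambda)^{\otimes k}$, pin down the scalar to be $1$.

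Second, I would apply Theorem \ref{main thm} with $\lambda$ replaced by $k\lambda$ to get
\[
\Psi_{\bf i}^{(k\lambda, w)}(b) = -v_{\bf i}\bigl(G^{\rm up}_{k\lambda, w}(b)/\tau_\lambda^k\bigr)
\]
for all $b \in \mathcal{B}_w(k\lambda)$. Since $\Psi_{\bf i}^{(k\lambda, w)}$ is injective, the right-hand side takes pairwise distinct values as $b$ varies. Combining this with Proposition \ref{Demazure crystal} (4), which supplies the basis, and with Proposition \ref{prop1,val} (2), which expresses the valuation of a general nonzero section as the minimum of the basis valuations appearing in its expansion, yields for each fixed $k > 0$
\[
\{\Psi_{\bf i}^{(k\lambda, w)}(b) \mid b \in \mathcal{B}_w(k\lambda)\} = \{-v_{\bf i}(\sigma/\tau_\lambda^k) \mid \sigma \in R(\mathcal{L}_\lambda)_k \setminus \{0\}\}.
\]
Taking unions over $k > 0$ then produces the semigroup equality $\mathcal{S}_{\bf i}^{(\lambda, w)} = \omega(S(X(w), \mathcal{L}_\lambda, v_{\bf i}, \tau_\lambda))$.

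Third, the cone and convex body equalities follow formally: since $\omega$ is a linear homeomorphism of $\r \times \r^r$, it commutes with taking the smallest real closed cone containing a given set, which gives $\mathcal{C}_{\bf i}^{(\lambda, w)} = \omega(C(X(w), \mathcal{L}_\lambda, v_{\bf i}, \tau_\lambda))$; and intersecting both cones with the slice $\{1\} \times \r^r$, which $\omega$ preserves setwise by acting as ${\bf a} \mapsto -{\bf a}$, produces $\Delta_{\bf i}^{(\lambda, w)} = -\Delta(X(w), \mathcal{L}_\lambda, v_{\bf i}, \tau_\lambda)$. The only potentially nontrivial point is the normalization verification $\tau_\lambda^k = \tau_{k\lambda}$ in the first step; everything else is routine bookkeeping combining Theorem \ref{main thm} with Proposition \ref{prop1,val} (2).
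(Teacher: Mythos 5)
Your proof follows essentially the same route as the paper's: reduce to the equality of $\mathcal{S}_{\bf i}^{(\lambda,w)}$ with $\omega(S(X(w),\mathcal{L}_\lambda,v_{\bf i},\tau_\lambda))$ by applying Theorem \ref{main thm} to $k\lambda$ together with the normalization $\tau_\lambda^k=\tau_{k\lambda}$ and Proposition \ref{prop1,val}~(2), then pass to cones and slices formally. The only difference is that you spell out the normalization check and the $\omega$-equivariance of the cone/slice constructions, which the paper leaves implicit.
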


\begin{proof}
Since $\mathcal{L}_{\lambda} ^{\otimes k} = \mathcal{L}_{k\lambda}$ and $\tau_\lambda ^k = \tau_{k\lambda}$ in $H^0(X(w), \mathcal{L}_{k\lambda})$ for all $k \in \z_{> 0}$, it follows that \[S(X(w), \mathcal{L}_\lambda, v_{\bf i}, \tau_\lambda) = \bigcup_{k>0} \{(k, v_{\bf i}(\sigma / \tau_{k\lambda})) \mid \sigma \in H^0(X(w), \mathcal{L}_{k\lambda}) \setminus \{0\}\}.\] Also, since $\Psi_{\bf i} ^{(k\lambda, w)} (b)$, $b \in \mathcal{B}_w (k\lambda)$, are all distinct, we deduce from Proposition \ref{prop1,val} (2) and Theorem \ref{main thm} that \[\{\Psi_{\bf i} ^{(k\lambda, w)} (b) \mid b \in \mathcal{B}_w (k\lambda)\} = \{-v_{\bf i} (\sigma/\tau_{k\lambda}) \mid \sigma \in H^0(X(w), \mathcal{L}_{k\lambda}) \setminus \{0\}\}\] for all $k \in \z_{> 0}$, which implies that $\mathcal{S}_{\bf i} ^{(\lambda, w)} = \omega(S(X(w), \mathcal{L}_\lambda, v_{\bf i}, \tau_\lambda))$. From this equality, the other assertions follow immediately by the definitions. 
\end{proof}

\begin{cor}\label{corollary}
Let ${\bf i} \in I^r$ be a reduced word for $w \in W$, and $\lambda$ a dominant integral weight.
\begin{enumerate}
\item[{\rm (1)}] The sets $\mathcal{S}_{\bf i} ^{(\lambda, w)}$ and $S(X(w), \mathcal{L}_\lambda, v_{\bf i}, \tau_\lambda)$ are both finitely generated semigroups.  
\item[{\rm (2)}] The real closed cones $\mathcal{C}_{\bf i} ^{(\lambda, w)}$ and $C(X(w), \mathcal{L}_\lambda, v_{\bf i}, \tau_\lambda)$ are both rational convex polyhedral cones, and the equality $S(X(w), \mathcal{L}_\lambda, v_{\bf i}, \tau_\lambda) = C(X(w), \mathcal{L}_\lambda, v_{\bf i}, \tau_\lambda) \cap (\z_{>0} \times \z^r)$ holds.
\item[{\rm (3)}] The compact sets $\Delta_{\bf i} ^{(\lambda, w)}$ and $\Delta(X(w), \mathcal{L}_\lambda, v_{\bf i}, \tau_\lambda)$ are both rational convex polytopes, and the equality $\Psi_{\bf i} ^{(\lambda, w)} (\mathcal{B}_w (\lambda)) = -\Delta(X(w), \mathcal{L}_\lambda, v_{\bf i}, \tau_\lambda) \cap \z^r$ holds.
\end{enumerate}
\end{cor}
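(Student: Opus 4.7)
My plan is to derive all three parts by combining Corollary \ref{corollary1}, which transports everything between the ``Newton--Okounkov side'' and the ``Kashiwara side'' via the linear automorphism $\omega$, with the structural results already in Corollaries \ref{a finite union} and \ref{a finite union of polytopes}. Those earlier corollaries give only \emph{finite unions} of rational convex polyhedral cones (resp., polytopes), and the main extra input needed is a ``convexity upgrade'' promoting such a finite union to a single cone (resp., polytope).

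I would handle part (2) first. Since $S(X(w),\mathcal{L}_\lambda,v_{\bf i},\tau_\lambda)$ is a semigroup, the real closed cone $C(X(w),\mathcal{L}_\lambda,v_{\bf i},\tau_\lambda)$ is convex, and by Corollary \ref{corollary1} the same is true of $\mathcal{C}_{\bf i}^{(\lambda,w)}$. By Corollary \ref{a finite union} (2), $\mathcal{C}_{\bf i}^{(\lambda,w)}$ is also a union $C_1\cup\cdots\cup C_m$ of rational convex polyhedral cones. A standard fact says that the convex hull of a finite union of cones equals their Minkowski sum: one has $x_1+\cdots+x_m = \sum_j \tfrac{1}{m}(m x_j) \in \mathrm{conv}(\bigcup_j C_j)$, while the reverse inclusion is clear. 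Hence $\mathcal{C}_{\bf i}^{(\lambda,w)} = \mathrm{conv}(\mathcal{C}_{\bf i}^{(\lambda,w)}) = C_1+\cdots+C_m$, which is a single rational convex polyhedral cone, and applying $\omega^{-1}$ gives the same for $C(X(w),\mathcal{L}_\lambda,v_{\bf i},\tau_\lambda)$. The asserted equality $S=C\cap(\z_{>0}\times\z^r)$ then follows by applying $\omega^{-1}$ to the corresponding equality in Corollary \ref{a finite union} (2).

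For part (1), I would apply Gordan's lemma to the rational convex polyhedral cone $\mathcal{C}_{\bf i}^{(\lambda,w)}\subset\r^{r+1}$, giving that $\mathcal{C}_{\bf i}^{(\lambda,w)}\cap\z^{r+1}$ is a finitely generated monoid. The key auxiliary observation is that the slice of $\mathcal{C}_{\bf i}^{(\lambda,w)}$ at height $0$ reduces to $\{0\}$: if $(0,{\bf a}) \in \mathcal{C}_{\bf i}^{(\lambda,w)}$ with ${\bf a}\neq 0$, then fixing any $(1,{\bf b}) \in \mathcal{C}_{\bf i}^{(\lambda,w)}$ produces the entire ray $\{{\bf b}+t{\bf a}\mid t\ge 0\}\subseteq \Delta_{\bf i}^{(\lambda,w)}$, contradicting the boundedness of $\Delta_{\bf i}^{(\lambda,w)}$ guaranteed by Corollary \ref{a finite union of polytopes}. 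Hence $\mathcal{S}_{\bf i}^{(\lambda,w)} = (\mathcal{C}_{\bf i}^{(\lambda,w)}\cap\z^{r+1})\setminus\{0\}$ is a finitely generated semigroup, generated by the nonzero Hilbert-basis elements; transporting via $\omega^{-1}$ yields the same for $S(X(w),\mathcal{L}_\lambda,v_{\bf i},\tau_\lambda)$.

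Part (3) is then a direct corollary: $\Delta_{\bf i}^{(\lambda,w)}=\mathcal{C}_{\bf i}^{(\lambda,w)}\cap(\{1\}\times\r^r)$ is cut out of a rational convex polyhedral cone by a rational affine hyperplane, and is bounded by Corollary \ref{a finite union of polytopes}, so it is a rational convex polytope; the same follows for $\Delta(X(w),\mathcal{L}_\lambda,v_{\bf i},\tau_\lambda) = -\Delta_{\bf i}^{(\lambda,w)}$. The equality $\Psi_{\bf i}^{(\lambda,w)}(\mathcal{B}_w(\lambda)) = -\Delta(X(w),\mathcal{L}_\lambda,v_{\bf i},\tau_\lambda)\cap\z^r$ is then just Corollary \ref{a finite union of polytopes} combined with $\Delta_{\bf i}^{(\lambda,w)}=-\Delta(X(w),\mathcal{L}_\lambda,v_{\bf i},\tau_\lambda)$ from Corollary \ref{corollary1}. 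The only substantive step in the whole argument is the convexity upgrade in part (2); everything else is a direct translation between the two sides.
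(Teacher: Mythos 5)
Your proof is correct and follows the paper's one-paragraph argument closely: part (2) comes from Corollary \ref{a finite union} (2) plus convexity of $C(X(w), \mathcal{L}_\lambda, v_{\bf i}, \tau_\lambda)$, part (3) from part (2) plus Corollary \ref{a finite union of polytopes}, and part (1) from part (2) plus Gordan's lemma. You spell out two steps the paper leaves implicit. The Minkowski-sum identity for the convex hull of a finite union of cones (each containing the origin) justifies the ``convexity upgrade'' in part (2). More substantively, your observation that $\mathcal{C}_{\bf i}^{(\lambda,w)} \cap (\{0\} \times \r^r) = \{0\}$, which you deduce from the boundedness of $\Delta_{\bf i}^{(\lambda,w)}$, is genuinely needed to pass from Gordan's lemma, which gives finite generation of the monoid $\mathcal{C}_{\bf i}^{(\lambda,w)} \cap \z^{r+1}$, to finite generation of the semigroup $\mathcal{S}_{\bf i}^{(\lambda,w)}$ itself (without that step, the height-one integral points could fail to be generated by a finite set, as with $\{(k,a) \in \z_{>0} \times \z_{\ge 0}\}$); the paper's proof takes this for granted.
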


\begin{proof}
Part (2) follows from Corollary \ref{a finite union} (2) and from the fact that $C(X(w), \mathcal{L}_\lambda, v_{\bf i}, \tau_\lambda)$ is convex. Then, part (3) is an immediate consequence of part (2) and Corollary \ref{a finite union of polytopes}. Finally, part (1) follows from part (2) and from Gordan's lemma (see, for instance, \cite[Proposition 1.2.17]{CLS}). 
\end{proof}

\subsection{Proof of Theorem \ref{main thm}}

We make use of the lower global basis of $U_q (\mathfrak{u}^-)$. Let $U^- \subset G$ be the unipotent radical of the opposite Borel subgroup, $\mathfrak{u}^- \subset \mathfrak{g}$ the Lie algebra of $U^-$, and $U(\mathfrak{u}^-)$ the universal enveloping algebra of $\mathfrak{u}^-$; this $\c$-algebra is isomorphic to $U_q ^\q(\mathfrak{u}^-) \otimes_{\q[q, q^{-1}]} \c$ by identifying $F_i$ with $f_i \otimes 1$. For $b \in \mathcal{B}(\infty)$, denote by $G^{\rm low} (b) \in U(\mathfrak{u}^-)$ the specialization of $G_q ^{\rm low} (b)$ at $q = 1$, i.e., $G^{\rm low} (b) := G_q ^{\rm low} (b) \otimes 1 \in U_q ^\q(\mathfrak{u}^-) \otimes_{\q[q, q^{-1}]} \c \simeq U(\mathfrak{u}^-)$. The algebra $U(\mathfrak{u}^-)$ has a Hopf algebra structure given by the following coproduct $\Delta$, counit $\varepsilon$, and antipode $S$: 
\[\Delta(F_i) = F_i \otimes 1 + 1 \otimes F_i,\ \varepsilon(F_i) = 0,\ {\rm and}\ S(F_i) = -F_i\]
for $i \in I$. Also, we can regard $U(\mathfrak{u}^-)$ as a multigraded $\c$-algebra: \[U(\mathfrak{u}^-) = \bigoplus_{{\bf d} \in \z^I _{\ge 0}} U(\mathfrak{u}^-)_{\bf d},\] where the homogeneous component $U(\mathfrak{u}^-)_{\bf d}$ for ${\bf d} = (d_i)_{i \in I} \in \z^I _{\ge 0}$ is defined to be the $\c$-subspace of $U(\mathfrak{u}^-)$ spanned by elements $F_{j_1} \cdots F_{j_{|{\bf d}|}}$ for which the cardinality of $\{1 \le l \le |{\bf d}| \mid j_l = i\}$ is equal to $d_i$ for all $i \in I$; here we set $|{\bf d}| := \sum_{i \in I} d_i$. Let \[U(\mathfrak{u}^-)^\ast _{\rm gr} := \bigoplus_{{\bf d} \in \z^I _{\ge 0}} U(\mathfrak{u}^-)_{\bf d} ^\ast\] be the graded dual of $U(\mathfrak{u}^-)$ endowed with the dual Hopf algebra structure, and $\{G^{\rm up} (b) \mid b \in \mathcal{B}(\infty)\} \subset U(\mathfrak{u}^-)^\ast _{\rm gr}$ the dual basis of $\{G^{\rm low} (b) \mid b \in \mathcal{B}(\infty)\} \subset U(\mathfrak{u}^-)$. Proposition \ref{properties of global bases} (2) implies that $\pi_\lambda(G^{\rm low} (b)) = G^{\rm low} _\lambda (\pi_\lambda(b))$ for all $b \in \widetilde{\mathcal{B}} (\lambda)$, and that $\pi_\lambda(G^{\rm low} (b)) = 0$ for all $b \in \mathcal{B}(\infty) \setminus \widetilde{\mathcal{B}} (\lambda)$, where by abuse of notation, we denote by $\pi_\lambda: U(\mathfrak{u}^-) \twoheadrightarrow V(\lambda)$ the surjective $U(\mathfrak{u}^-)$-module homomorphism given by $\pi_\lambda(u) = u \cdot v_\lambda$ for $u \in U(\mathfrak{u}^-)$. Hence we obtain $\pi_\lambda ^\ast (G^{\rm up} _\lambda (\pi_\lambda(b))) = G^{\rm up} (b)$ for all $b \in \widetilde{\mathcal{B}}(\lambda)$, where $\pi_\lambda ^\ast: V(\lambda) ^\ast \hookrightarrow U(\mathfrak{u}^-)^\ast _{\rm gr}$ denotes the dual of $\pi_\lambda$. Note that the coordinate ring $\c[U^-]$ has a Hopf algebra structure given by the following coproduct $\Delta$, counit $\varepsilon$, and antipode $S$: 
\[\Delta(f) (u_1 \otimes u_2) = f(u_1 u_2),\ \varepsilon(f) = f(1),\ {\rm and}\ S(f) (u) = f(u^{-1})\]
for $f \in \c[U^-]$ and $u, u_1, u_2 \in U^-$. It is well-known that this Hopf algebra is isomorphic to the dual Hopf algebra $U(\mathfrak{u}^-)_{\rm gr} ^\ast$ as follows.

\vspace{2mm}\begin{lem}[{see, for instance, \cite[Proposition 5.1]{GLS}}]\label{lemma1}
Define a map $\Upsilon: U(\mathfrak{u}^-)_{\rm gr} ^\ast \rightarrow \c[U^-]$ by $\Upsilon(\rho)(\exp(x)) = \sum_{l \ge 0} \rho(x^l)/l!$ for $\rho \in U(\mathfrak{u}^-)_{\rm gr} ^\ast$ and $x \in \mathfrak{u}^-;$ here, $\exp(x) \in U^-$ and $x^l \in U(\mathfrak{u}^-)$ for all $l \in \z_{\ge 0}$. Then, the map $\Upsilon$ is an isomorphism of Hopf algebras.
\end{lem}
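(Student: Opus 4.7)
The plan is to verify in turn that (i) $\Upsilon$ lands in $\c[U^-]$, (ii) $\Upsilon$ preserves the algebra structure, (iii) $\Upsilon$ preserves the coalgebra structure, and (iv) $\Upsilon$ is bijective; compatibility with antipodes is then automatic for any bialgebra map between Hopf algebras. For (i), I observe that since $\rho \in U(\mathfrak{u}^-)_{\rm gr}^\ast = \bigoplus_{\bf d} U(\mathfrak{u}^-)_{\bf d}^\ast$ is a finite sum of homogeneous components, there exists $L \in \z_{\ge 0}$ with $\rho$ vanishing on $U(\mathfrak{u}^-)_{\bf d}$ whenever $|{\bf d}| > L$. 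For any $x = \sum_i c_i F_i \in \mathfrak{u}^-$ we have $x^l \in \bigoplus_{|{\bf d}| = l} U(\mathfrak{u}^-)_{\bf d}$, so $\rho(x^l) = 0$ once $l > L$ and the series defining $\Upsilon(\rho)(\exp x)$ truncates. Moreover $\rho(x^l)/l!$ is a homogeneous polynomial of degree $l$ in the coordinates $c_i$, so pullback along the isomorphism of affine varieties $\exp \colon \mathfrak{u}^- \xrightarrow{\sim} U^-$ (valid because $\mathfrak{u}^-$ is nilpotent) shows $\Upsilon(\rho) \in \c[U^-]$.

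For (ii), the product on $U(\mathfrak{u}^-)_{\rm gr}^\ast$ is dual to the coproduct $\Delta$ of $U(\mathfrak{u}^-)$, and since each $F_i$ is primitive one has $\Delta(x^l) = \sum_{k=0}^l \binom{l}{k} x^k \otimes x^{l-k}$; hence
\[
\Upsilon(\rho_1 \rho_2)(\exp x) = \sum_{l \ge 0} \frac{(\rho_1 \otimes \rho_2)(\Delta x^l)}{l!} = \Bigl(\sum_{k \ge 0} \frac{\rho_1(x^k)}{k!}\Bigr)\Bigl(\sum_{m \ge 0} \frac{\rho_2(x^m)}{m!}\Bigr) = \Upsilon(\rho_1)(\exp x)\cdot\Upsilon(\rho_2)(\exp x),
\]
and the counit $\varepsilon$ of $U(\mathfrak{u}^-)$ clearly maps to the constant function $1$.

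Step (iii) is the main technical point. Writing $\Delta_\ast$ for the coproduct on $U(\mathfrak{u}^-)_{\rm gr}^\ast$ (dual to the multiplication in $U(\mathfrak{u}^-)$), the required identity $\Delta \circ \Upsilon = (\Upsilon \otimes \Upsilon) \circ \Delta_\ast$, evaluated at $(\exp x, \exp y) \in U^- \times U^-$, reduces to
\[
\Upsilon(\rho)(\exp x \cdot \exp y) = \sum_{m,n \ge 0} \frac{\rho(x^m y^n)}{m!\, n!},
\]
both sides being finite sums under the graded-dual hypothesis on $\rho$. To establish this I would fix a faithful finite-dimensional representation of $U^-$; inside the matrix algebra the product $\exp(x)\exp(y)$ equals the polynomial expression $\sum_{m,n} x^m y^n/(m!n!)$ literally (the sums are finite because $\mathfrak{u}^-$ acts nilpotently), so evaluating $\Upsilon(\rho)$ on this matrix yields the right-hand side. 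Equivalently one can apply Baker--Campbell--Hausdorff to write $\exp(x)\exp(y) = \exp(z)$ and reconcile the two expansions using primitivity of $x, y$; keeping careful track of grading and nilpotency here is the principal obstacle.

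Finally, (iv) follows from the Poincar\'e--Birkhoff--Witt symmetrization $\sigma \colon S(\mathfrak{u}^-) \xrightarrow{\sim} U(\mathfrak{u}^-)$, an isomorphism of graded coalgebras satisfying $\sigma(x^l) = x^l$. Thus $\{x^l \mid x \in \mathfrak{u}^-\}$ spans $\sigma(S^l(\mathfrak{u}^-))$ and $U(\mathfrak{u}^-) = \bigoplus_l \sigma(S^l(\mathfrak{u}^-))$, so $\rho(x^l) = 0$ for all $x$ and $l$ forces $\rho = 0$, giving injectivity. For surjectivity, given $f \in \c[U^-]$ I define $\rho$ on each $\sigma(S^l(\mathfrak{u}^-))$ by reading off the degree-$l$ Taylor coefficient of the polynomial $f(\exp x)$ in the coordinates of $x$; since $f$ is polynomial only finitely many degrees contribute, so $\rho \in U(\mathfrak{u}^-)_{\rm gr}^\ast$ and $\Upsilon(\rho) = f$ by construction.
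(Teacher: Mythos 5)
The paper offers no proof of this lemma; it cites \cite[Proposition 5.1]{GLS} and moves on, so there is no author argument to compare against. Evaluated on its own, your plan and steps (i), (ii), (iv) are essentially sound, although you should not write $x = \sum_i c_i F_i$: the elements $F_i$ generate $\mathfrak{u}^-$ as a Lie algebra but do not span it as a vector space (except in trivial cases). A general $x \in \mathfrak{u}^-$ is $\sum_\beta c_\beta F_\beta$ over positive roots $\beta$; the degree estimates still work because $x^l$ lies in $\bigoplus_{|\mathbf d| \ge l} U(\mathfrak{u}^-)_{\mathbf d}$, so $\rho(x^l)$ truncates and the pullback along $\exp$ is polynomial in the $c_\beta$.

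Step (iii) as written has a real gap. Passing to a faithful finite-dimensional representation does give $\exp(x)\exp(y) = \sum_{m,n} x^m y^n/(m!n!)$ as a matrix identity, but that is merely the definition of the matrix on which you want to evaluate $\Upsilon(\rho)$; it does not compute $\Upsilon(\rho)$ at that point, since $\Upsilon(\rho)$ is defined on $U^-$ via $\Upsilon(\rho)(\exp z) = \sum_l \rho(z^l)/l!$, and you still need $z = \log(\exp x \exp y)$. Worse, the induced map $U(\mathfrak{u}^-)\to\mathrm{End}(V)$ is never injective in high degrees (already $F^2 = 0$ in the $2\times 2$ realization for $\mathfrak{sl}_2$), so one cannot transport identities of that kind from $\mathrm{End}(V)$ back to $U(\mathfrak{u}^-)$. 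The correct argument is exactly the BCH route you set aside: extend $\rho$ to the degree-completion $\widehat{U(\mathfrak{u}^-)} = \prod_{\mathbf d} U(\mathfrak{u}^-)_{\mathbf d}$ (this is well-defined because $\rho$ has bounded degree), note that $\Upsilon(\rho)(\exp z) = \rho\bigl(\sum_l z^l/l!\bigr)$ by definition, and invoke the universal formal identity $\sum_l \mathrm{BCH}(x,y)^l/l! = \bigl(\sum_m x^m/m!\bigr)\bigl(\sum_n y^n/n!\bigr)$ in $\widehat{U(\mathfrak{u}^-)}$ (specialized from the completed free associative algebra on two primitive generators). Applying $\rho$ then gives $\Upsilon(\rho)(\exp x \exp y) = \sum_{m,n}\rho(x^m y^n)/(m!n!)$. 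Once stated this way there is no "principal obstacle"; the grading bound on $\rho$ is exactly what makes every sum finite and the formal manipulation legitimate.
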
\vspace{2mm}

Using the isomorphism $\Upsilon$, we identify $U(\mathfrak{u}^-)_{\rm gr} ^\ast$ with $\c[U^-]$. Then, the space $H^0(G/B, \mathcal{L}_\lambda)$ of global sections can be regarded as a $\c$-subspace of $\c[U^-]$ by: $H^0(G/B, \mathcal{L}_\lambda) = V(\lambda)^\ast \lhook\joinrel\xrightarrow{\pi_\lambda ^\ast} U(\mathfrak{u}^-)_{\rm gr} ^\ast = \c[U^-]$. Also, we can regard $U^-$ as an affine open subset of $G/B$ by: \[U^- \hookrightarrow G/B,\ u \mapsto u \bmod B;\] hence we obtain an isomorphism \[\c(G/B) \xrightarrow{\sim} \c(U^-),\ f \mapsto f|_{U^-}.\] 

\vspace{2mm}\begin{lem}\label{lemma2}
The equality $(\tau/\tau_\lambda)|_{U^-} = (\Upsilon \circ \pi_\lambda ^\ast)(\tau)$ holds in $\c[U^-]$ for all $\tau \in H^0(G/B, \mathcal{L}_\lambda)$. In particular, the restriction of $G^{\rm up} _{\lambda} (\pi_\lambda(b))/\tau_\lambda$ to $U^-$ is identical to $\Upsilon(G^{\rm up} (b)) \in \c[U^-]$ for all $b \in \widetilde{\mathcal{B}}(\lambda)$.
\end{lem}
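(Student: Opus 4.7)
The plan is to trivialize $\mathcal{L}_\lambda$ on the open cell $U^-$ via $\tau_\lambda$, translate the ratio $\tau/\tau_\lambda$ into a concrete function on $U^-$ given by evaluation on $v_\lambda$, and then identify this function with $\Upsilon\circ\pi_\lambda^\ast$ directly from its definition via the exponential series.

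First I would make precise the isomorphism $V(\lambda)^\ast \xrightarrow{\sim} H^0(G/B, \mathcal{L}_\lambda)$: a linear form $\tau \in V(\lambda)^\ast$ corresponds to the section represented by the function $\phi_\tau \colon G \to \c,\ g\mapsto \tau(g\cdot v_\lambda)$, which satisfies the equivariance $\phi_\tau(gb) = \lambda(b)\phi_\tau(g)$ for $b\in B$ built into the construction $\mathcal{L}_\lambda = (G\times\c)/B$. Under this dictionary, for any local nonvanishing section $\sigma$ the function $\tau/\sigma$ on the appropriate open set is the ratio $\phi_\tau/\phi_\sigma$. I then observe that for $u\in U^-$, one has $u\cdot v_\lambda = v_\lambda + (\text{lower-weight terms})$, so $\phi_{\tau_\lambda}(u) = \tau_\lambda(u\cdot v_\lambda) = \tau_\lambda(v_\lambda) = 1$ because $\tau_\lambda$ has weight $-\lambda$ and is normalized by $\tau_\lambda(v_\lambda)=1$. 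Hence $\tau_\lambda$ is nowhere vanishing on $U^-$ and $(\tau/\tau_\lambda)(u) = \tau(u\cdot v_\lambda)$ for every $u\in U^-$.

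Next I would compute the right-hand side. By the definitions of $\pi_\lambda^\ast$ and $\Upsilon$, for $x\in\mathfrak{u}^-$,
\[
(\Upsilon\circ\pi_\lambda^\ast)(\tau)(\exp(x)) \;=\; \sum_{l\ge 0}\frac{\pi_\lambda^\ast(\tau)(x^l)}{l!} \;=\; \sum_{l\ge 0}\frac{\tau(x^l\cdot v_\lambda)}{l!}.
\]
Since $x$ acts nilpotently on the finite-dimensional module $V(\lambda)$, the sum $\sum_{l\ge 0}x^l\cdot v_\lambda/l!$ is finite and equal to $\exp(x)\cdot v_\lambda$, so the displayed expression equals $\tau(\exp(x)\cdot v_\lambda)$. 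Because $\exp\colon\mathfrak{u}^-\xrightarrow{\sim} U^-$ is an isomorphism of varieties, this matches $(\tau/\tau_\lambda)|_{U^-}$ pointwise, proving the first equality. The second (``in particular'') assertion is then immediate from the identity $\pi_\lambda^\ast(G^{\rm up}_\lambda(\pi_\lambda(b))) = G^{\rm up}(b)$ for $b\in\widetilde{\mathcal{B}}(\lambda)$ recorded just before the lemma, by taking $\tau = G^{\rm up}_\lambda(\pi_\lambda(b))$.

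There is no serious obstacle; the argument is essentially a chain of unwindings. The only subtlety is keeping the three identifications consistent: the equivariant-function model of $H^0(G/B,\mathcal{L}_\lambda)$, the action of $U(\mathfrak{u}^-)$ on $v_\lambda$ through $\pi_\lambda$, and the exponential-series formula defining $\Upsilon$. Once these are lined up, showing that $\tau_\lambda$ trivializes $\mathcal{L}_\lambda$ on $U^-$ (via the weight argument above) is the only input specific to the geometry; the rest is formal manipulation with the enveloping algebra and the exponential map.
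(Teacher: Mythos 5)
Your proposal is correct and follows essentially the same route as the paper: show $\tau_\lambda(\exp(x)\cdot v_\lambda)=1$ so that $(\tau/\tau_\lambda)(\exp(x))=\tau(\exp(x)\cdot v_\lambda)$, then unwind $(\Upsilon\circ\pi_\lambda^\ast)(\tau)(\exp(x))=\sum_{l\ge 0}\tau(x^l\cdot v_\lambda)/l!=\tau(\exp(x)\cdot v_\lambda)$, and deduce the ``in particular'' clause from $\pi_\lambda^\ast(G^{\rm up}_\lambda(\pi_\lambda(b)))=G^{\rm up}(b)$. The only difference is that you spell out the equivariant-function model of $H^0(G/B,\mathcal{L}_\lambda)$ and the nilpotence justifying the finiteness of the exponential series, which the paper leaves implicit.
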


\begin{proof}
Because $\exp(x) \cdot v_\lambda \in v_\lambda + x \cdot V(\lambda)$ and $\tau_\lambda (x \cdot V(\lambda)) = \{0\}$ for $x \in \mathfrak{u}^-$, we have $\tau_\lambda(\exp(x) \cdot v_\lambda) = 1$. Therefore, as an element of $H^0(G/B, \mathcal{L}_\lambda)$, the section $\tau_\lambda$ does not vanish on $U^-$ $(\hookrightarrow G/B)$; in particular, $(\tau/\tau_\lambda)|_{U^-} \in \c[U^-]$. Furthermore, we see that
\begin{align*}
(\tau/\tau_\lambda)(\exp(x)) &= \tau(\exp(x) \cdot v_\lambda)/\tau_\lambda(\exp(x) \cdot v_\lambda)\\
&({\rm by\ the\ definition\ of\ the\ isomorphism}\ \rho_\lambda ^\ast: V(\lambda)^\ast \xrightarrow{\sim} H^0(G/B, \mathcal{L}_\lambda)\ {\rm in}\ \S\S 3.1)\\
&= \tau(\exp(x) \cdot v_\lambda)\quad({\rm since}\ \tau_\lambda(\exp(x) \cdot v_\lambda) = 1).
\end{align*}
Also, we have
\begin{align*}
(\Upsilon \circ \pi_\lambda ^\ast) (\tau) (\exp(x)) &= \sum_{l \ge 0} (\pi_\lambda ^\ast(\tau))(x^l)/l!\quad ({\rm by\ the\ definition\ of}\ \Upsilon)\\
&= \sum_{l \ge 0} \tau(x^l \cdot v_\lambda)/l!\quad ({\rm since}\ \pi_\lambda(x^l) = x^l \cdot v_\lambda)\\
&= \tau(\exp(x) \cdot v_\lambda).
\end{align*}
From these, the assertion of the lemma follows immediately.
\end{proof}
Let $\langle \cdot, \cdot \rangle: U(\mathfrak{u}^-)^\ast _{\rm gr} \times U(\mathfrak{u}^-) \rightarrow \c$ denote the canonical pairing. Define a $U(\mathfrak{u}^-)$-bimodule structure on $U(\mathfrak{u}^-)^\ast _{\rm gr}$ by 
\begin{align*}
&\langle x \cdot \rho, y \rangle := - \langle \rho, x \cdot y \rangle,\ {\rm and}\\
&\langle \rho \cdot x, y \rangle := - \langle \rho, y \cdot x \rangle
\end{align*}
for $x \in \mathfrak{u}^-$, $\rho \in U(\mathfrak{u}^-)^\ast _{\rm gr}$, and $y \in U(\mathfrak{u}^-)$. Also, the coordinate ring $\c[U^-]$ has a natural $U^-$-bimodule structure, which is given by 
\begin{align*}
&(u_1 \cdot f)(u_2) := f(u_1 ^{-1} u_2),\ {\rm and}\\
&(f \cdot u_1)(u_2) := f(u_2 u_1 ^{-1})
\end{align*}
for $u_1, u_2 \in U^-$ and $f \in \c[U^-]$. This induces a $U(\mathfrak{u}^-)$-bimodule structure on $\c[U^-]$. Note that $\Upsilon$ is an isomorphism of $U(\mathfrak{u}^-)$-bimodules.

\vspace{2mm}\begin{lem}\label{lemma3}
For $k \ge 0$, $i \in I$, and $b \in \mathcal{B}(\infty)$, 
\begin{align*}
&F_i ^{(k)} \cdot G^{\rm up} (b) \in (-1)^{k} \genfrac{(}{)}{0pt}{}{\varepsilon_i (b)}{k} G^{\rm up} (\tilde{e}_i ^k b) + \sum_{\substack{b^\prime \in \mathcal{B}(\infty);\ {\rm wt}(b^\prime) = {\rm wt}(\tilde{e}_i ^k b),\\ \varepsilon_i (b^\prime) < \varepsilon_i (\tilde{e}_i ^k b)}} \z G^{\rm up} (b^\prime),\ {\it and}\\
&G^{\rm up} (b) \cdot F_i ^{(k)} \in (-1)^{k} \genfrac{(}{)}{0pt}{}{\varepsilon_i ^\ast (b)}{k} G^{\rm up} ((\tilde{e}_i ^\ast)^k b) + \sum_{\substack{b^\prime \in \mathcal{B}(\infty);\ {\rm wt}(b^\prime) = {\rm wt}((\tilde{e}_i ^\ast)^k b),\\ \varepsilon_i ^\ast (b^\prime) < \varepsilon_i ^\ast ((\tilde{e}_i ^\ast)^k b)}} \z G^{\rm up} (b^\prime). 
\end{align*}
Here, $F_i ^{(k)} := F_i ^k /k!$, and $\genfrac{(}{)}{0pt}{}{\varepsilon_i (b)}{k}$, $\genfrac{(}{)}{0pt}{}{\varepsilon_i ^\ast (b)}{k}$ are usual binomial coefficients. In particular, there hold the equalities
\begin{align*}
&\varepsilon_i (b) = \max \{k \in \z_{\ge 0} \mid F_i ^{(k)} \cdot G^{\rm up} (b) \neq 0\},\ F_i ^{(\varepsilon_i (b))} \cdot G^{\rm up} (b) = (-1)^{\varepsilon_i (b)} G^{\rm up} (\tilde{e}_i ^{\varepsilon_i (b)} b),\ {\it and}\\
&\varepsilon_i ^\ast (b) = \max \{k \in \z_{\ge 0} \mid G^{\rm up} (b) \cdot F_i ^{(k)} \neq 0\},\ G^{\rm up} (b) \cdot F_i ^{(\varepsilon_i ^\ast (b))} = (-1)^{\varepsilon_i ^\ast (b)} G^{\rm up} ((\tilde{e}_i ^\ast)^{\varepsilon_i ^\ast (b)} b).
\end{align*}
\end{lem}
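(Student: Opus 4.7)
The plan is to reduce the first assertion to Proposition~\ref{properties of global bases} (4) via the canonical pairing $\langle\cdot,\cdot\rangle$ between $U(\mathfrak{u}^-)^\ast_{\rm gr}$ and $U(\mathfrak{u}^-)$. Iterating the defining relation $\langle F_i \cdot \rho, y \rangle = -\langle \rho, F_i \cdot y \rangle$ (consistent since $F_i$ commutes with itself) yields
\[
\langle F_i^{(k)} \cdot G^{\rm up}(b),\, G^{\rm low}(b') \rangle = (-1)^k \langle G^{\rm up}(b),\, F_i^{(k)} \cdot G^{\rm low}(b') \rangle.
\]
Writing $F_i^{(k)} \cdot G^{\rm up}(b) = \sum_{b'} c_{b'} G^{\rm up}(b')$ in the dual basis, the coefficient $c_{b'}$ is therefore $(-1)^k$ times the coefficient of $G^{\rm low}(b)$ in the expansion of $F_i^{(k)} \cdot G^{\rm low}(b')$, which I would read off from Proposition~\ref{properties of global bases} (4) specialized at $q=1$ (so that $\genfrac{[}{]}{0pt}{}{s}{k}_i$ becomes the ordinary binomial $\genfrac{(}{)}{0pt}{}{s}{k}$ and $f_i^{(k)}$ becomes $F_i^{(k)}$).

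The structural input from Proposition~\ref{properties of global bases} (4) is that the leading term of $F_i^{(k)} \cdot G^{\rm low}(b')$ is $\genfrac{(}{)}{0pt}{}{\varepsilon_i(b')+k}{k} G^{\rm low}(\tilde{f}_i^k b')$ while the remainder involves only $G^{\rm low}(b'')$ with $\varepsilon_i(b'') > \varepsilon_i(\tilde{f}_i^k b') = \varepsilon_i(b') + k$, where the last equality is the crystal axiom $\varepsilon_i(\tilde{f}_i b') = \varepsilon_i(b') + 1$. Setting $b'' = b$, the coefficient of $G^{\rm low}(b)$ is nonzero in exactly two situations: either $b = \tilde{f}_i^k b'$, i.e., $b' = \tilde{e}_i^k b$ (which forces $k \le \varepsilon_i(b)$), giving $c_{b'} = (-1)^k \genfrac{(}{)}{0pt}{}{\varepsilon_i(\tilde{e}_i^k b)+k}{k} = (-1)^k \genfrac{(}{)}{0pt}{}{\varepsilon_i(b)}{k}$; or $\varepsilon_i(b) > \varepsilon_i(b') + k$, equivalently $\varepsilon_i(b') < \varepsilon_i(\tilde{e}_i^k b)$, giving $c_{b'} \in \z$. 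The weight condition ${\rm wt}(b') = {\rm wt}(\tilde{e}_i^k b)$ is automatic because $F_i^{(k)}$ is homogeneous in the $\z^I_{\ge 0}$-grading on $U(\mathfrak{u}^-)^\ast_{\rm gr}$.

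The ``in particular'' assertions follow at once. Taking $k = \varepsilon_i(b)$, the residual sum is empty since $\varepsilon_i(\tilde{e}_i^{\varepsilon_i(b)} b) = 0$ and $\varepsilon_i \ge 0$ on $\mathcal{B}(\infty)$, so $F_i^{(\varepsilon_i(b))} \cdot G^{\rm up}(b) = (-1)^{\varepsilon_i(b)} G^{\rm up}(\tilde{e}_i^{\varepsilon_i(b)} b)$; for $k > \varepsilon_i(b)$, neither case in the previous paragraph can arise, so $F_i^{(k)} \cdot G^{\rm up}(b) = 0$, and combined with the nonvanishing leading coefficient for $k \le \varepsilon_i(b)$ this gives $\varepsilon_i(b) = \max\{k \ge 0 \mid F_i^{(k)} G^{\rm up}(b) \neq 0\}$. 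The right-action formula is proved by exactly the same argument applied to the second identity of Proposition~\ref{properties of global bases} (4), together with the analogous right-module relation $\langle \rho \cdot F_i^{(k)}, y \rangle = (-1)^k \langle \rho, y \cdot F_i^{(k)} \rangle$; the $\ast$-versions $\varepsilon_i^\ast$, $\tilde{e}_i^\ast$, $\tilde{f}_i^\ast$ replace $\varepsilon_i$, $\tilde{e}_i$, $\tilde{f}_i$ throughout. I do not foresee any substantive obstacle: the proof is essentially bookkeeping through the dual pairing, and the single non-routine input is the crystal identity $\varepsilon_i(\tilde{f}_i^k b') = \varepsilon_i(b') + k$, which converts conditions on $\tilde{f}_i^k b'$ into conditions on $b'$.
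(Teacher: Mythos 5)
Your proposal is correct and follows essentially the same route as the paper's proof: pairing against the lower global basis, moving $F_i^{(k)}$ across the dual pairing at the cost of $(-1)^k$, invoking Proposition~\ref{properties of global bases}~(4) at $q=1$ to identify the leading coefficient and the $\varepsilon_i$-constraints on the other terms, and then reading off the ``in particular'' statements. The only cosmetic difference is that the paper works out the right-action identity $G^{\rm up}(b)\cdot F_i^{(k)}$ and calls the left-action case similar, whereas you do the reverse; both directions rest on the same dual-pairing transposition and the two halves of Proposition~\ref{properties of global bases}~(4).
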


\begin{proof}
We prove only the assertion for $G^{\rm up} (b) \cdot F_i ^{(k)}$; the proof of the assertion for $F_i ^{(k)} \cdot G^{\rm up} (b)$ is similar. For $b^\prime \in \mathcal{B}(\infty)$, the coefficient of $G^{\rm up} (b^\prime)$ in $G^{\rm up} (b) \cdot F_i ^{(k)}$ is equal to \[\langle G^{\rm up} (b) \cdot F_i ^{(k)}, G^{\rm low}(b^\prime)\rangle = (-1)^{k} \langle G^{\rm up} (b), G^{\rm low} (b^\prime) \cdot F_i ^{(k)}\rangle.\] If $\langle G^{\rm up} (b), G^{\rm low} (b^\prime) \cdot F_i ^{(k)}\rangle \neq 0$, then we see that ${\rm wt}(b) = {\rm wt}((\tilde{f}_i ^\ast)^k b^\prime)$, i.e., ${\rm wt}(b^\prime) = {\rm wt}((\tilde{e}_i ^\ast)^k b)$ by the definition of the graded dual $U(\mathfrak{u}^-)_{\rm gr} ^\ast$. Moreover, in this case, Proposition \ref{properties of global bases} (4) implies that $b = (\tilde{f}_i ^\ast)^k b^\prime$ or $\varepsilon_i ^\ast (b) > \varepsilon_i ^\ast ((\tilde{f}_i ^\ast)^k b^\prime)$, i.e., $b^\prime = (\tilde{e}_i ^\ast)^k b$ or $\varepsilon_i ^\ast (b^\prime) < \varepsilon_i ^\ast ((\tilde{e}_i ^\ast)^k b)$. Therefore, we deduce that \[G^{\rm up} (b) \cdot F_i ^{(k)} \in \c G^{\rm up} ((\tilde{e}_i ^\ast)^k b) + \sum_{\substack{b^\prime \in \mathcal{B}(\infty);\ {\rm wt}(b^\prime) = {\rm wt}((\tilde{e}_i ^\ast)^k b),\\ \varepsilon_i ^\ast (b^\prime) < \varepsilon_i ^\ast ((\tilde{e}_i ^\ast)^k b)}} \c G^{\rm up} (b^\prime).\] The claim for the coefficients also follows from the same proposition.
\end{proof}

Note that in Lemma \ref{lemma3}, $F_i ^{(k)} \cdot G^{\rm up} (b) \neq 0$ (resp., $G^{\rm up} (b) \cdot F_i ^{(k)} \neq 0$) if and only if $F_i ^{k} \cdot G^{\rm up} (b) \neq 0$ (resp., $G^{\rm up} (b) \cdot F_i ^{k} \neq 0$) since $F_i ^{(k)} = F_i ^k /k!$. 

If $w_0 \in W$ is the longest element, then the Schubert variety $X(w_0)$ is just the full flag variety $G/B$. For a reduced word ${\bf i} = (i_N, \ldots, i_1)$ for $w_0$, we deduce from the argument in \S\S 3.2 that the product map \[U_{i_N} ^- \times \cdots \times U_{i_1} ^- \rightarrow U^-,\ (u_N, \ldots, u_1) \mapsto u_N \cdots u_1,\] is birational, and hence that the coordinate ring $\c[U^-]$ can be regarded as a $\c$-subalgebra of $\c[U_{i_N} ^- \times \cdots \times U_{i_1} ^-] = \c[t_N, \ldots, t_1]$. In this way, we think of $G^{\rm up}(b) \in U(\mathfrak{u}^-)_{\rm gr} ^\ast = \c[U^-]$ for $b \in \mathcal{B}(\infty)$ as a polynomial in the variables $t_N, \ldots, t_1$.

\vspace{2mm}\begin{prop}\label{Verma case}
For a reduced word ${\bf i} = (i_N, \ldots, i_1)$ for $w_0$ and $b \in \mathcal{B}(\infty)$, the Kashiwara embedding $\Psi_{\bf i}(b)$ is equal to $-v_{\bf i}(G^{\rm up}(b))$.
\end{prop}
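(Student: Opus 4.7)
The plan is to compute the lex-leading monomial of $G^{\rm up}(b)$, viewed as an element of $\c[U^-] \simeq \c[t_N,\ldots,t_1]$, and identify its exponent vector with $\Psi_{\bf i}(b)$ via the string characterization of Proposition \ref{star string}\,(2). The argument is driven by an explicit monomial expansion of $G^{\rm up}(b)$ together with Lemma \ref{lemma3}.

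The first step is to derive the coefficient formula
\[
G^{\rm up}(b) \;=\; \sum_{l_1,\ldots,l_N\ge 0}\, \bigl\langle G^{\rm up}(b),\, F_{i_N}^{(l_N)} \cdots F_{i_1}^{(l_1)}\bigr\rangle\, t_1^{l_1} t_2^{l_2}\cdots t_N^{l_N}
\]
in $\c[t_N,\ldots,t_1]$. I would prove this by induction on $N$ using the Hopf-algebra compatibility $\Upsilon(\rho)(u_1 u_2) = \sum \Upsilon(\rho_{(1)})(u_1)\,\Upsilon(\rho_{(2)})(u_2)$ from Lemma \ref{lemma1} applied at each $u_k = \exp(t_k F_{i_k})$, together with the dual-coproduct relation $\sum \rho_{(1)}(x)\rho_{(2)}(y) = \rho(xy)$ on $U(\mathfrak{u}^-)^\ast_{\rm gr}$ and the basic identity $\Upsilon(\rho)(\exp(x)) = \sum_{l\ge 0}\rho(x^l)/l!$.

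The second step extracts the lex-maximum coefficient recursively. Setting $b_0 := b$, $a_k := \varepsilon_{i_k}^\ast(b_{k-1})$, and $b_k := (\tilde e_{i_k}^\ast)^{a_k} b_{k-1}$ for $k = 1,\ldots,N$, I would combine the bimodule identity $\langle \rho, y F_{i_k}^{(l_k)}\rangle = (-1)^{l_k}\langle \rho\cdot F_{i_k}^{(l_k)}, y\rangle$ with Lemma \ref{lemma3} to obtain
\[
\bigl\langle G^{\rm up}(b_{k-1}),\, F_{i_N}^{(l_N)}\cdots F_{i_k}^{(l_k)}\bigr\rangle \;=\; \begin{cases} 0 & \text{if } l_k > a_k,\\ \bigl\langle G^{\rm up}(b_k),\, F_{i_N}^{(l_N)}\cdots F_{i_{k+1}}^{(l_{k+1})}\bigr\rangle & \text{if } l_k = a_k, \end{cases}
\]
where the sign $(-1)^{l_k}$ from the bimodule identity cancels the $(-1)^{a_k}$ appearing in $G^{\rm up}(b_{k-1})\cdot F_{i_k}^{(a_k)} = (-1)^{a_k} G^{\rm up}(b_k)$. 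Iterating from $k = 1$ to $k = N$, I deduce that the coefficient of $t_1^{l_1}\cdots t_N^{l_N}$ vanishes whenever $(l_1,\ldots,l_N)$ is lex-greater than $(a_1,\ldots,a_N)$, and at the candidate maximum $(a_1,\ldots,a_N)$ it equals $\langle G^{\rm up}(b_N),\, 1\rangle = \delta_{b_N,\, b_\infty}$.

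It remains to confirm that $b_N = b_\infty$, so the leading coefficient is nonzero. Since $b_k^\ast = \tilde e_{i_k}^{a_k}\cdots\tilde e_{i_1}^{a_1} b^\ast$, the sequence $(a_k)_{k\ge 1}$ is precisely the string parameterization of $b^\ast$ along $(i_1, i_2, \ldots)$, which by Proposition \ref{star string}\,(2) agrees with the Kashiwara embedding; since ${\bf i}$ is a reduced word for $w_0$, Remark \ref{definition1} forces $a_k = 0$ for $k > N$, so the greedy string terminates at $b_\infty$ after exactly $N$ steps and $b_N = b_\infty$. Hence the lex-leading monomial of $G^{\rm up}(b)$ is $t_1^{a_1}\cdots t_N^{a_N}$ with $(a_1,\ldots,a_N) = \Psi_{\bf i}(b)$, yielding $v_{\bf i}(G^{\rm up}(b)) = -\Psi_{\bf i}(b)$. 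I expect the first step to be the main obstacle, as carefully unpacking $\Upsilon$ at an ordered product of exponentials requires handling the non-commutativity of the $F_{i_k}$ with proper care; once the expansion is established, the remainder is clean bookkeeping of bimodule signs via Lemma \ref{lemma3}.
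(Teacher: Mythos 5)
Your proof is correct. The approach differs from the paper's in its technical preparation: the paper invokes Proposition \ref{valuation Chevalley}, which reinterprets the valuation $v_{\bf i}$ as an iterated maximal $F_{i_k}$-derivation, combined with the identification of the right $U(\mathfrak{u}^-)$-action on $\c[U^-]$ with the $-\partial/\partial t_k$ action from \S\S 3.2. You bypass Proposition \ref{valuation Chevalley} entirely and instead derive the explicit monomial expansion
\[
G^{\rm up}(b) = \sum_{l_1,\ldots,l_N\ge 0}\, \bigl\langle G^{\rm up}(b),\, F_{i_N}^{(l_N)} \cdots F_{i_1}^{(l_1)}\bigr\rangle\, t_1^{l_1} \cdots t_N^{l_N}
\]
from the Hopf-algebra compatibility of $\Upsilon$ in Lemma \ref{lemma1}, then read off the lex-leading exponent directly. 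Both proofs have the same combinatorial core: iterate Lemma \ref{lemma3} to peel off $\varepsilon_{i_k}^\ast$ one index at a time, and match the resulting sequence with $\Psi_{\bf i}(b)$ via Proposition \ref{star string}\,(2). The paper's route is more modular (Proposition \ref{valuation Chevalley} is a self-contained tool about the birational chart), while yours is more self-contained within the proof itself, at the cost of establishing the expansion formula; either choice is valid. One remark: your handling of the leading coefficient is slightly more careful than the paper's, in that you explicitly verify $b_N = b_\infty$ (hence a nonzero leading coefficient) via the termination of the string parameterization of $b^\ast$ along $(i_1, i_2, \ldots)$ — the paper leaves this implicit in the inductive restriction argument. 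The sign bookkeeping $(-1)^{l_k}$ from the bimodule pairing cancelling against $(-1)^{a_k}$ from Lemma \ref{lemma3} is exactly what the paper also relies on in its iteration step, so you have identified the right cancellation.
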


\begin{proof}
First, observe that the right action of $F_{i_1}$ on $\c[U^-]$ $(\hookrightarrow \c[U_{i_N} ^- \times \cdots \times U_{i_1} ^-])$, which is induced by the right $U(\mathfrak{u}^-)$-module structure on $\c[U^-]$, is identical to that of $F_{i_1}$ on $\c[U_{i_N} ^- \times \cdots \times U_{i_1} ^-]$, which is defined in \S\S 3.2. Therefore, if we write $\Psi_{\bf i} (b) = (a_1, \ldots, a_N)$ and $v_{\bf i} (G^{\rm up} (b)) = -(a_1 ^\prime, \ldots, a_N ^\prime)$ for $b \in \mathcal{B}(\infty)$, then we deduce that
\begin{align*}
a_1 ^\prime &= \max\{a \in \z_{\ge 0} \mid G^{\rm up} (b) \cdot F_{i_1} ^{a} \neq 0\}\quad ({\rm by\ Proposition}\ \ref{valuation Chevalley})\\
&= \max\{a \in \z_{\ge 0} \mid (\tilde{e}_{i_1} ^\ast)^a b \neq 0\}\quad ({\rm by\ the\ assertion\ of\ Lemma}\ \ref{lemma3}\ {\rm for}\ \varepsilon_i ^\ast(b))\\ 
&= a_1\quad ({\rm by\ Proposition}\ \ref{star string}\ (2)).
\end{align*} 
Repeating this argument, with $b$ and $U^-$ replaced by $(\tilde{e}_{i_1} ^\ast)^{a_1} b$ and $\overline{U_{i_N} ^- \cdots U_{i_2} ^-}$ $(\subset U^-)$, respectively, we obtain $(a_1 ^\prime, \ldots, a_N ^\prime) = (a_1, \ldots, a_N)$; here we use the equality \[G^{\rm up} (b) \cdot F_{i_1} ^{(a_1)} = (-1)^{a_1} G^{\rm up}((\tilde{e}_{i_1} ^\ast)^{a_1} b)\quad({\rm by\ the\ assertion\ of\ Lemma}\ \ref{lemma3}\ {\rm for}\ G^{\rm up} (b) \cdot F_i ^{(\varepsilon_i ^\ast (b))})\] and the fact that the restriction of $G^{\rm up} (b) \cdot F_{i_1} ^{(a_1)}$ to $\overline{U_{i_N} ^- \cdots U_{i_2} ^-}$ is identical to $G^{\rm up} (b) \cdot F_{i_1} ^{(a_1)} \in \c[t_N, \ldots, t_2]$ as a polynomial in the variables $t_N, \ldots, t_2$ (see also the proof of Proposition \ref{valuation Chevalley}). This proves the proposition. 
\end{proof}

Since $\Psi_{\bf i}(b)$, $b \in \mathcal{B}(\infty)$, are all distinct, we obtain the following from Proposition \ref{prop1,val} (2).

\vspace{2mm}\begin{cor}\label{cor1}
The polyhedral realization $\Psi_{\bf i}(\mathcal{B}(\infty))$ is identical to $-v_{\bf i}(\c[U^-] \setminus \{0\})$.
\end{cor}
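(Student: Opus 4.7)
The plan is to deduce the equality directly from Proposition \ref{Verma case} together with Proposition \ref{prop1,val} (2), exploiting the fact that $\{G^{\rm up}(b) \mid b \in \mathcal{B}(\infty)\}$ is a $\c$-basis of $\c[U^-]$ (via the isomorphism $\Upsilon: U(\mathfrak{u}^-)^\ast_{\rm gr} \xrightarrow{\sim} \c[U^-]$ of Lemma \ref{lemma1}).

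First I would establish the inclusion $\Psi_{\bf i}(\mathcal{B}(\infty)) \subset -v_{\bf i}(\c[U^-] \setminus \{0\})$. This is immediate: by Proposition \ref{Verma case}, every element of $\Psi_{\bf i}(\mathcal{B}(\infty))$ has the form $-v_{\bf i}(G^{\rm up}(b))$ for some $b \in \mathcal{B}(\infty)$, and $G^{\rm up}(b) \in \c[U^-] \setminus \{0\}$.

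For the reverse inclusion, take an arbitrary $f \in \c[U^-] \setminus \{0\}$ and expand it in the basis, say $f = \sum_{b} c_b\, G^{\rm up}(b)$ with only finitely many $c_b \in \c$ nonzero. Since $\Psi_{\bf i}: \mathcal{B}(\infty) \hookrightarrow \z^\infty$ is an injection (Proposition \ref{star string} (1)), the values $\Psi_{\bf i}(b) = -v_{\bf i}(G^{\rm up}(b))$ are pairwise distinct, and hence so are the values $v_{\bf i}(G^{\rm up}(b))$ for $b \in \mathcal{B}(\infty)$. Proposition \ref{prop1,val} (2) then applies and yields
\[
v_{\bf i}(f) = \min\{v_{\bf i}(G^{\rm up}(b)) \mid c_b \neq 0\},
\]
so $v_{\bf i}(f) = v_{\bf i}(G^{\rm up}(b_0))$ for some $b_0$ with $c_{b_0} \neq 0$. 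Therefore $-v_{\bf i}(f) = \Psi_{\bf i}(b_0) \in \Psi_{\bf i}(\mathcal{B}(\infty))$, which proves the reverse inclusion.

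There is no substantive obstacle here; the statement is essentially a formal consequence of the basis property of $\{G^{\rm up}(b)\}$ in $\c[U^-]$ together with the general fact about valuations on linear combinations of elements with distinct values. The only point worth double-checking is that the $G^{\rm up}(b)$, viewed as regular functions on $U^-$ via $\Upsilon$, do indeed form a $\c$-basis of $\c[U^-]$, which follows directly from Lemma \ref{lemma1} since $\{G^{\rm up}(b) \mid b \in \mathcal{B}(\infty)\}$ is by construction the dual basis to the $\c$-basis $\{G^{\rm low}(b) \mid b \in \mathcal{B}(\infty)\}$ of $U(\mathfrak{u}^-)$.
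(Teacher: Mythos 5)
Your argument is correct and matches the paper's: the inclusion in one direction is Proposition \ref{Verma case} directly, and the reverse inclusion follows by expanding an arbitrary element of $\c[U^-]$ in the basis $\{G^{\rm up}(b)\}$ and applying Proposition \ref{prop1,val} (2), using the injectivity of $\Psi_{\bf i}$ to guarantee the valuations $v_{\bf i}(G^{\rm up}(b))$ are pairwise distinct. The paper merely compresses this into one line; your write-up makes the same reasoning explicit.
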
\vspace{2mm}

Recall that the map $\Psi_{\bf i} ^{(\lambda, w_0)}$ (see Definition \ref{definition2}) gives an embedding of $\mathcal{B}(\lambda)$ into $\z^N$, which we denote simply by $\Psi_{\bf i} ^{(\lambda)}$. It follows from the definition of $\Psi_{\bf i} ^{(\lambda, w_0)}$ that $\Psi_{\bf i} ^{(\lambda)} (\pi_\lambda(b)) = \Psi_{\bf i} (b)$ for all $b \in \widetilde{\mathcal{B}}(\lambda)$. Hence we obtain the following by Lemma \ref{lemma2}.

\vspace{2mm}\begin{cor}\label{the case of longest}
The Kashiwara embedding $\Psi_{\bf i} ^{(\lambda)}(b)$ is equal to $-v_{\bf i}(G^{\rm up} _{\lambda}(b)/\tau_\lambda)$ for all $b \in \mathcal{B}(\lambda)$.
\end{cor}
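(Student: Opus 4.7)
The plan is to combine Proposition \ref{Verma case} with Lemma \ref{lemma2}, using the surjection $\pi_\lambda: \widetilde{\mathcal{B}}(\lambda) \twoheadrightarrow \mathcal{B}(\lambda)$ to lift the statement from $\mathcal{B}(\lambda)$ back to $\mathcal{B}(\infty)$. Here is the order in which I would carry out the steps.

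First, given $b \in \mathcal{B}(\lambda)$, I would choose the unique $\tilde{b} \in \widetilde{\mathcal{B}}(\lambda) \subset \mathcal{B}(\infty)$ with $\pi_\lambda(\tilde{b}) = b$ (this exists and is unique by Proposition \ref{connection of crystals}~(2)). The remark immediately preceding the corollary states that $\Psi_{\bf i}^{(\lambda)}(\pi_\lambda(\tilde{b})) = \Psi_{\bf i}(\tilde{b})$, so on the ``crystal side'' the problem is reduced to computing $\Psi_{\bf i}(\tilde{b})$. By Proposition \ref{Verma case}, this equals $-v_{\bf i}(G^{\rm up}(\tilde{b}))$, where $G^{\rm up}(\tilde{b})$ is viewed as an element of $\c[U^-] \subset \c[t_N, \ldots, t_1]$ via the isomorphism $\Upsilon$ of Lemma \ref{lemma1}.

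Next, on the ``geometry side,'' I would use Lemma \ref{lemma2}: since $\tilde{b} \in \widetilde{\mathcal{B}}(\lambda)$, the restriction of $G^{\rm up}_\lambda(b)/\tau_\lambda = G^{\rm up}_\lambda(\pi_\lambda(\tilde{b}))/\tau_\lambda$ to the open subset $U^- \hookrightarrow G/B$ coincides with $\Upsilon(G^{\rm up}(\tilde{b})) \in \c[U^-]$. Because the valuation $v_{\bf i}$ is defined on the function field $\c(G/B)$ precisely through the identification $\c(G/B) \xrightarrow{\sim} \c(U^-) \xrightarrow{\sim} \c(t_N, \ldots, t_1)$ coming from the birational morphism of \S\S3.2, the restriction to $U^-$ does not change the value of $v_{\bf i}$. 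Consequently
\[
v_{\bf i}\bigl(G^{\rm up}_\lambda(b)/\tau_\lambda\bigr) \;=\; v_{\bf i}\bigl(\Upsilon(G^{\rm up}(\tilde{b}))\bigr) \;=\; v_{\bf i}\bigl(G^{\rm up}(\tilde{b})\bigr),
\]
where in the last equality we identify $G^{\rm up}(\tilde{b})$ with its image under $\Upsilon$, exactly as is done implicitly in Proposition \ref{Verma case}.

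Combining these two chains of equalities gives
\[
\Psi_{\bf i}^{(\lambda)}(b) \;=\; \Psi_{\bf i}(\tilde{b}) \;=\; -v_{\bf i}(G^{\rm up}(\tilde{b})) \;=\; -v_{\bf i}(G^{\rm up}_\lambda(b)/\tau_\lambda),
\]
which is the desired conclusion. There is no real obstacle once Proposition \ref{Verma case} and Lemma \ref{lemma2} are in hand; the only point requiring a moment of care is the bookkeeping on the ``good set'' $\widetilde{\mathcal{B}}(\lambda)$, namely checking that every $b \in \mathcal{B}(\lambda)$ really has a preimage under $\pi_\lambda$ to which both Proposition \ref{Verma case} and Lemma \ref{lemma2} apply. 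This is guaranteed by Proposition \ref{connection of crystals}~(2), so the argument goes through without any further computation.
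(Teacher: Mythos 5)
Your argument is correct and is essentially the same as the paper's: you combine the observation that $\Psi_{\bf i}^{(\lambda)}(\pi_\lambda(\tilde b)) = \Psi_{\bf i}(\tilde b)$ for $\tilde b \in \widetilde{\mathcal{B}}(\lambda)$ with Proposition~\ref{Verma case} and Lemma~\ref{lemma2}, exactly as the paper does. The bookkeeping concerning $\widetilde{\mathcal{B}}(\lambda)$ and the bijection $\pi_\lambda$ from Proposition~\ref{connection of crystals}~(2) is handled correctly, so there is nothing to add.
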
\vspace{2mm}

We remark that this corollary is precisely the assertion of Theorem \ref{main thm} in the case $w = w_0$. In the rest of this subsection, we give a proof of Theorem \ref{main thm} for an arbitrary $w \in W$. Let ${\bf i} = (i_r, \ldots, i_1)$ be a reduced word for $w$, and extend it to a reduced word $\hat{\bf i} = (i_N, \ldots, i_{r+1}, i_r, \ldots, i_1)$ for $w_0$. By the same argument as the one preceding Proposition \ref{Verma case}, the reduced word $\hat{\bf i}$ gives an injective $\c$-algebra homomorphism $\c[U^-] \hookrightarrow \c[t_N, \ldots, t_1]$. Moreover, the coordinate ring $\c[X(w) \cap U^-]$ can be identified with a $\c$-subalgebra of $\c[t_r, \ldots, t_1]$ $(\subset \c[t_N, \ldots, t_1])$ by using the birational morphism \[U_{i_r} ^- \times \cdots \times U_{i_1} ^- \rightarrow X(w) \cap U^-,\ (u_r, \ldots, u_1) \mapsto u_r \cdots u_1 \bmod B;\] note that the restriction map $\c[U^-] \twoheadrightarrow \c[X(w) \cap U^-]$ is given by 
\begin{align*}
t_k \mapsto 
\begin{cases}
t_k &{\rm if}\ 1 \le k \le r,\\
0 &{\rm if}\ r+1 \le k \le N.
\end{cases}
\end{align*}
Also, recall that the highest term valuation $v_{\hat{\bf i}}$ (resp., $v_{\bf i}$) is a valuation on $\c(t_N, \ldots, t_1)$ (resp., $\c(t_r, \ldots, t_1)$). For $b \in \mathcal{B}_w(\lambda)$, we have 
\begin{align*}
-v_{\hat{\bf i}}(G^{\rm up} _{\lambda}(b)/\tau_\lambda) &= \Psi_{\hat{\bf i}} ^{(\lambda)}(b)\quad ({\rm by\ Corollary}\ \ref{the case of longest})\\
&= (\Psi_{\bf i} ^{(\lambda, w)}(b), \underbrace{0, \ldots, 0}_{N-r})\quad ({\rm by\ Definition}\ \ref{definition2}).
\end{align*}
Therefore, the highest term of $G^{\rm up} _{\lambda}(b)/\tau_\lambda \in \c[t_N, \ldots, t_1]$ with respect to the lexicographic order $t_1 > \cdots > t_N$ does not contain the variables $t_k$ for $r+1 \le k \le N$, which implies that \[-v_{\bf i}((G^{\rm up} _{\lambda}(b)/\tau_\lambda)|_{X(w) \cap U^-}) = \Psi_{\bf i} ^{(\lambda, w)}(b).\] Now we remark that the restriction map $V(\lambda)^\ast = H^0(G/B, \mathcal{L}_\lambda) \rightarrow H^0(X(w), \mathcal{L}_\lambda) = V_w(\lambda)^\ast$ is identical to the dual of the inclusion map $V_w(\lambda) \hookrightarrow V(\lambda)$. This implies that $G^{\rm up} _{\lambda}(b)|_{X(w)} = G^{\rm up} _{\lambda, w}(b)$, and hence that $(G^{\rm up} _{\lambda}(b)/\tau_\lambda)|_{X(w) \cap U^-} = G^{\rm up} _{\lambda, w}(b)/\tau_\lambda$; here recall that the section $\tau_\lambda \in H^0(X(w), \mathcal{L}_\lambda)$ is defined to be the restriction of the section $\tau_\lambda \in H^0(G/B, \mathcal{L}_\lambda)$. Thus, we obtain $-v_{\bf i}(G^{\rm up} _{\lambda, w}(b)/\tau_\lambda) = \Psi_{\bf i} ^{(\lambda, w)}(b)$. This completes the proof of Theorem \ref{main thm}.

\section{Applications}
\subsection{Explicit forms of Newton-Okounkov convex bodies}

Under the assumption that $(\tilde{\bf i}, \lambda)$ is ample (see Definition \ref{definition of ample} below), the polyhedral realization $\Psi_{\tilde{\bf i}} ^{(\lambda)} (\mathcal{B}_w (\lambda))$ in \S\S 2.3 is given by a system of explicit affine inequalities. In order to obtain explicit forms of Newton-Okounkov convex bodies, we recall the description of $\Psi_{\tilde{\bf i}} ^{(\lambda)} (\mathcal{B}_w (\lambda))$, following \cite{N1} and \cite{N2}. Consider the infinite-dimensional vector space \[\q^{\infty} := \{{\bf a} = (\ldots, a_k, \ldots, a_2, a_1) \mid a_k \in \q\ {\rm and}\ a_k = 0\ {\rm for}\ k \gg 0\},\] and write an affine function $\psi$ on $\q^{\infty}$ as $\psi({\bf a}) = \psi_0 + \sum_{k \ge 1} \psi_k a_k$, with $\psi_0, \psi_1, \ldots \in \q$. Recall that ${\bf i} = (i_r, \ldots, i_1)$ is a reduced word for $w \in W$, and that $\tilde{\bf i} = (\ldots, i_k, \ldots, i_{r+1}, i_r, \ldots, i_1)$ is an extension of ${\bf i}$ such that $i_k \neq i_{k+1}$ for all $k \ge 1$, and such that the cardinality of $\{k \ge 1 \mid i_k = i\}$ is $\infty$ for each $i \in I$. For $k \ge 1$, we set 
\begin{align*}
k^{(+)} &:= \min\{l > k \mid i_l = i_k\},\ {\rm and}\\
k^{(-)} &:= 
\begin{cases}
\max\{l < k \mid i_l = i_k\} &{\rm if\ it\ exists},\\
0 &{\rm otherwise}.
\end{cases}
\end{align*}
For $k \ge 1$ and $i \in I$, let $\beta_k ^{(\pm)} ({\bf a})$, $\lambda^{(i)} ({\bf a})$ denote the affine functions given by 
\begin{align*}
&\beta_k ^{(+)} ({\bf a}) := a_k + \sum_{k < j < k^{(+)}} \langle \alpha_{i_j}, h_{i_k} \rangle a_j + a_{k^{(+)}},\\
&\beta_k ^{(-)} ({\bf a}) :=
\begin{cases}
a_{k^{(-)}} + \sum_{k^{(-)} < j < k} \langle \alpha_{i_j}, h_{i_k} \rangle a_j + a_k &{\rm if}\ k^{(-)} > 0,\\
-\langle\lambda, h_{i_k} \rangle + \sum_{1 \le j < k} \langle \alpha_{i_j}, h_{i_k} \rangle a_j + a_k &{\rm if}\ k^{(-)} = 0,
\end{cases}\\
&\lambda^{(i)} ({\bf a}) := \langle\lambda, h_i\rangle - \sum_{1 \le j < \tilde{{\bf i}}^{(i)}} \langle\alpha_{i_j}, h_i\rangle a_j - a_{\tilde{{\bf i}}^{(i)}},
\end{align*}
where we write $\tilde{\bf i}^{(i)} := \min\{k \ge 1 \mid i_k = i\}$. Define operators $\widehat{S}_k$, $k \ge 1$, for affine functions on $\q^{\infty}$ by
\begin{align*}
\widehat{S}_k(\psi) := 
\begin{cases}
\psi - \psi_k \beta_k ^{(+)} &{\rm if}\ \psi_k >0,\\
\psi - \psi_k \beta_k ^{(-)} &{\rm if}\ \psi_k \le 0,
\end{cases}
\end{align*}
and let $\Xi_{\tilde{\bf i}} [\lambda]$ denote the set of all affine functions generated by $\widehat{S}_k$, $k \ge 1$, from the functions $a_j$, $j \ge 1$, and $\lambda^{(i)}({\bf a})$, $i \in I$; namely, 
\begin{align*}
\Xi_{\tilde{\bf i}} [\lambda] &:= \{\widehat{S}_{j_k} \cdots \widehat{S}_{j_1} a_{j_0} \mid k \ge 0\ {\rm and}\ j_0, \ldots, j_k \ge 1\}\\
&\cup \{\widehat{S}_{j_k} \cdots \widehat{S}_{j_1} \lambda^{(i)} ({\bf a}) \mid k \ge 0,\ i \in I,\ {\rm and}\ j_1, \ldots, j_k \ge 1\}.
\end{align*}

\vspace{2mm}\begin{defi}\normalfont\label{definition of ample}
Set \[\Sigma_{\tilde{\bf i}}[\lambda] := \{{\bf a} \in \z_{\tilde{\bf i}} ^\infty [\lambda] \subset \q^\infty \mid \psi ({\bf a}) \ge 0\ {\rm for\ all}\ \psi \in \Xi_{\tilde{\bf i}} [\lambda]\};\]
a pair $(\tilde{\bf i}, \lambda)$ is called {\it ample} if $(\ldots, 0, \ldots, 0, 0) \in \Sigma_{\tilde{\bf i}}[\lambda]$.
\end{defi}\vspace{2mm}

\begin{prop}[{see \cite[Theorem 4.1]{N1} and \cite[Proposition 3.1]{N2}}]\label{ample}
Assume that $(\tilde{\bf i}, \lambda)$ is ample. Then, the image $\Psi_{\tilde{\bf i}} ^{(\lambda)} (\mathcal{B}_w(\lambda))$ is identical to the set \[\{(\ldots, a_k, \ldots, a_2, a_1) \in \Sigma_{\tilde{\bf i}}[\lambda] \mid a_k = 0\ {\it for\ all}\ k > r\}.\]
\end{prop}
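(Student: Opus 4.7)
The plan is to derive this description by combining two ingredients already in place: (i) the polyhedral realization of the full crystal $\Psi_{\tilde{\bf i}}^{(\lambda)}(\mathcal{B}(\lambda))$, which appears in \cite{N1}, and (ii) the Demazure truncation recorded in Proposition \ref{polyhedral realizations} (2).

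First I would establish the non-Demazure case, namely that under ampleness one has $\Psi_{\tilde{\bf i}}^{(\lambda)}(\mathcal{B}(\lambda)) = \Sigma_{\tilde{\bf i}}[\lambda]$. The strategy is an induction on the weight. The highest weight element maps to $(\ldots,0,0,0)\otimes r_\lambda$, which belongs to $\Sigma_{\tilde{\bf i}}[\lambda]$ precisely by the ampleness assumption. For the inclusion $\Psi_{\tilde{\bf i}}^{(\lambda)}(\mathcal{B}(\lambda)) \subseteq \Sigma_{\tilde{\bf i}}[\lambda]$, one checks that the defining inequalities $\psi({\bf a}) \ge 0$, $\psi \in \Xi_{\tilde{\bf i}}[\lambda]$, are preserved when a Kashiwara operator $\tilde{f}_i$ on $\z^\infty_{\tilde{\bf i}}[\lambda]$ is applied: the operators $\widehat{S}_k$ are designed precisely so that $\widehat{S}_k(\psi)({\bf a})$ equals $\psi(\tilde{f}_{i_k}{\bf a})$ when the minimum of $M^{(i_k)}({\bf a})$ is $k$ (resp.\ when it exceeds $k$), and the generators $a_j$ and $\lambda^{(i)}({\bf a})$ encode the two elementary non-negativity conditions coming from the definition of $\z^\infty_{\tilde{\bf i}}$ and from Lemma \ref{piecewise-linear inequality} respectively. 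For the reverse inclusion, one argues that every lattice point of $\Sigma_{\tilde{\bf i}}[\lambda]$ can be reached from the origin by successive applications of $\tilde{f}_i$'s without leaving $\Sigma_{\tilde{\bf i}}[\lambda]$; ampleness is what allows this descent to terminate inside the cone.

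Second, I would invoke Proposition \ref{polyhedral realizations} (2), which identifies the Demazure sub-crystal $\Psi_{\tilde{\bf i}}^{(\lambda)}(\mathcal{B}_w(\lambda))$ within $\Psi_{\tilde{\bf i}}^{(\lambda)}(\mathcal{B}(\lambda))$ as exactly those sequences $(\ldots, a_k, \ldots, a_2, a_1)\otimes r_\lambda$ satisfying $a_k = 0$ for all $k > r$. Intersecting this truncation condition with the equality $\Psi_{\tilde{\bf i}}^{(\lambda)}(\mathcal{B}(\lambda)) = \Sigma_{\tilde{\bf i}}[\lambda]$ gives the claim; no further inequalities are needed because the truncation is an affine-linear constraint already compatible with the $\widehat{S}_k$-generated system.

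The main obstacle is the stability of $\Sigma_{\tilde{\bf i}}[\lambda]$ under the Kashiwara operators on $\z^\infty_{\tilde{\bf i}}[\lambda]$. This is the heart of Nakashima's theorem: one must track, for each generator $\psi \in \Xi_{\tilde{\bf i}}[\lambda]$ and each operator $\tilde{f}_i$, how the value $\psi({\bf a})$ changes, and verify that the answer remains of the form $\psi'({\bf a})$ for some $\psi' \in \Xi_{\tilde{\bf i}}[\lambda]$. Without ampleness this fails in general, since the linear region of the piecewise-linear function ${\bf a}\mapsto \psi(\tilde{f}_i{\bf a})$ relevant to the base point may change, as reflected in the more delicate piecewise-linear description given in Corollary \ref{piecewise-linear inequality for delta}. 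Ampleness pins the highest weight vector to the ``correct'' chamber of this piecewise-linear structure, which is exactly the hypothesis that makes a uniform system of affine inequalities sufficient.
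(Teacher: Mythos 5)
The paper does not give an independent proof of Proposition~\ref{ample}; it cites \cite[Theorem~4.1]{N1} for the description of $\Psi_{\tilde{\bf i}}^{(\lambda)}(\mathcal{B}(\lambda))$ as $\Sigma_{\tilde{\bf i}}[\lambda]$ under ampleness, and \cite[Proposition~3.1]{N2} (already recorded in this paper as Proposition~\ref{polyhedral realizations}~(2)) for the Demazure truncation. Your two-step decomposition --- identify the full-crystal image with $\Sigma_{\tilde{\bf i}}[\lambda]$, then intersect with the truncation condition $a_k = 0$ for $k > r$ --- is exactly the route the citation indicates, so in that sense you have reconstructed the intended argument.

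One caution on your sketch of the first ingredient. You write that ``$\widehat{S}_k(\psi)({\bf a})$ equals $\psi(\tilde{f}_{i_k}{\bf a})$ when the minimum of $M^{(i_k)}({\bf a})$ is $k$.'' That identity does not hold as stated: if $\tilde{f}_{i_k}$ adds $1$ to the $k$-th coordinate, then $\psi(\tilde{f}_{i_k}{\bf a}) = \psi({\bf a}) + \psi_k$, whereas $\widehat{S}_k(\psi)({\bf a}) = \psi({\bf a}) - \psi_k\,\beta_k^{(\pm)}({\bf a})$. What Nakashima actually shows is that $\psi(\tilde{f}_{i_k}{\bf a}) \ge 0$ follows from the pair of inequalities $\psi({\bf a}) \ge 0$ and $\widehat{S}_k(\psi)({\bf a}) \ge 0$, using the sign information encoded in whether $\psi_k > 0$ or $\psi_k \le 0$ together with $k = \min M^{(i_k)}({\bf a})$ (which controls the signs of $\beta_k^{(\pm)}({\bf a})$ via the $\sigma$-functions). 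The closure of $\Xi_{\tilde{\bf i}}[\lambda]$ under $\widehat{S}_k$ then yields the desired stability. Since you are ultimately citing \cite[Theorem~4.1]{N1} rather than reproving it, this slip is harmless for the logic of the proposition, but the exact mechanism is worth getting right if you intend to spell out the inductive argument. The second step, applying Proposition~\ref{polyhedral realizations}~(2), is clean and needs no further justification.
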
\vspace{2mm}

For all $\psi \in \Xi_{\tilde{\bf i}} [\lambda]$, the constant term $\psi(\ldots, 0, \ldots, 0, 0)$ is regarded as a linear function of $\lambda$ by the definition of $\Xi_{\tilde{\bf i}} [\lambda]$; hence, for a fixed dominant integral weight $\lambda$, we can regard an element of $\Xi_{\tilde{\bf i}} [k\lambda]$ as a linear function of $k$ and $a_j$, $j \ge 1$. Therefore, we obtain the following from Definitions \ref{definition2}, \ref{definition3} and Corollary \ref{corollary1}.

\vspace{2mm}\begin{cor}\label{explicit description}
Assume that $(\tilde{\bf i}, \lambda)$ is ample. Then, 
\begin{align*}
\mathcal{S}_{\bf i} ^{(\lambda, w)} &= \omega(S(X(w), \mathcal{L}_\lambda, v_{\bf i}, \tau_\lambda))\\
&= \{(k, a_1, \ldots, a_r) \in \z_{>0} \times \z^r \mid \psi(\ldots, 0, 0, a_r, \ldots, a_1) \ge 0\ {\it for\ all}\ \psi \in \Xi_{\tilde{\bf i}} [k\lambda]\},\\
\mathcal{C}_{\bf i} ^{(\lambda, w)} &= \omega(C(X(w), \mathcal{L}_\lambda, v_{\bf i}, \tau_\lambda))\\
&= \{(k, a_1, \ldots, a_r) \in \r_{\ge 0} \times \r^r \mid \psi(\ldots, 0, 0, a_r, \ldots, a_1) \ge 0\ {\it for\ all}\ \psi \in \Xi_{\tilde{\bf i}} [k\lambda]\},\ {\it and}\\
\Delta_{\bf i} ^{(\lambda, w)} &= -\Delta(X(w), \mathcal{L}_\lambda, v_{\bf i}, \tau_\lambda)\\
&= \{(a_1, \ldots, a_r) \in \r^r \mid \psi(\ldots, 0, 0, a_r, \ldots, a_1) \ge 0\ {\it for\ all}\ \psi \in \Xi_{\tilde{\bf i}} [\lambda]\}.
\end{align*}
\end{cor}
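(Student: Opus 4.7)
The first equality in each of the three displayed lines is exactly the content of Corollary \ref{corollary1}, so the plan reduces to establishing the three explicit descriptions on the right-hand sides. These will be obtained by feeding Proposition \ref{ample} into Definitions \ref{definition2} and \ref{definition3}, the only subtlety being that Proposition \ref{ample} requires ampleness at the dominant weight in question, while the definitions of $\mathcal{S}_{\bf i}^{(\lambda, w)}$ and $\mathcal{C}_{\bf i}^{(\lambda, w)}$ involve all positive integer multiples $k\lambda$ for $k \in \z_{>0}$.

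Accordingly, the first step of the proof will be to verify that ampleness propagates: if $(\tilde{\bf i}, \lambda)$ is ample, then so is $(\tilde{\bf i}, k\lambda)$ for every $k \in \z_{>0}$. This is read off directly from the construction of $\Xi_{\tilde{\bf i}}[\lambda]$: the generating functions $a_j$ are independent of $\lambda$, while $\lambda^{(i)}({\bf a})$ and the $\lambda$-dependent branch of $\beta_k^{(-)}({\bf a})$ depend linearly on $\lambda$; moreover, the operators $\widehat{S}_k$ do not depend on $\lambda$. Consequently there is a natural bijection $\Xi_{\tilde{\bf i}}[\lambda] \leftrightarrow \Xi_{\tilde{\bf i}}[k\lambda]$ under which the element $\psi({\bf a}) = L_\psi(\lambda) + M_\psi({\bf a})$ (with $L_\psi$ linear in $\lambda$ and $M_\psi$ a $\lambda$-independent linear form in ${\bf a}$) is sent to $k L_\psi(\lambda) + M_\psi({\bf a})$. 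Ampleness at $\lambda$ ($L_\psi(\lambda) \ge 0$ for every generator) then forces ampleness at $k\lambda$.

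Given this, Proposition \ref{ample} applied to $k\lambda$, together with the truncation identification in Definition \ref{definition2}, yields
\[
\Psi_{\bf i}^{(k\lambda, w)}(\mathcal{B}_w(k\lambda)) = \{(a_1, \ldots, a_r) \in \z^r \mid \psi(\ldots, 0, 0, a_r, \ldots, a_1) \ge 0\ \text{for all}\ \psi \in \Xi_{\tilde{\bf i}}[k\lambda]\}
\]
for each $k \in \z_{>0}$. Taking the disjoint union over $k$ as in Definition \ref{definition3} gives the claimed explicit description of $\mathcal{S}_{\bf i}^{(\lambda, w)}$.

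For the cone equality, the key observation is that under the $\lambda \mapsto k\lambda$ correspondence above, the quantity $\psi(\ldots, 0, 0, a_r, \ldots, a_1)$ for $\psi \in \Xi_{\tilde{\bf i}}[k\lambda]$ is a linear function of $(k, a_1, \ldots, a_r)$. Hence the set $P$ on the right-hand side of the $\mathcal{C}$-equation is a closed convex cone, and by what was just shown, $P \cap (\z_{>0} \times \z^r) = \mathcal{S}_{\bf i}^{(\lambda, w)}$; in particular $\mathcal{C}_{\bf i}^{(\lambda, w)} \subset P$. For the reverse inclusion, I will use that any rational point of $P$ scales by a positive integer to an integer point, which lies in $\mathcal{S}_{\bf i}^{(\lambda, w)} \subset \mathcal{C}_{\bf i}^{(\lambda, w)}$; density of rational points in the convex set $P$ combined with closedness of $\mathcal{C}_{\bf i}^{(\lambda, w)}$ then yields $P \subset \mathcal{C}_{\bf i}^{(\lambda, w)}$. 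The $\Delta$-equation follows by slicing at $\{1\} \times \r^r$. The only step that requires genuine care is the propagation of ampleness and the attendant linearity claim; everything after that is a formal manipulation.
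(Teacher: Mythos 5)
Your proposal is correct and follows essentially the same route as the paper: the first equalities are Corollary \ref{corollary1}, and the explicit descriptions are obtained by feeding Proposition \ref{ample} through Definitions \ref{definition2} and \ref{definition3}, with the key observation being that for each $\psi \in \Xi_{\tilde{\bf i}}[\lambda]$ the constant term $L_\psi(\lambda)$ is linear in $\lambda$ while the linear part $M_\psi$ in ${\bf a}$ is $\lambda$-independent. You also correctly notice that one must first check that ampleness at $\lambda$ implies ampleness at $k\lambda$, a step the paper passes over in silence; this is a genuinely necessary verification, and your bijection $\Xi_{\tilde{\bf i}}[\lambda] \leftrightarrow \Xi_{\tilde{\bf i}}[k\lambda]$ sending $L_\psi(\lambda)+M_\psi$ to $kL_\psi(\lambda)+M_\psi$ is exactly the right mechanism.

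Two small imprecisions are worth cleaning up, though neither defeats the argument. First, the parenthetical gloss of ampleness as ``$L_\psi(\lambda)\ge 0$ for every generator'' is not the definition: ampleness requires $L_\psi(\lambda)\ge 0$ for \emph{every} $\psi\in\Xi_{\tilde{\bf i}}[\lambda]$, and this is strictly stronger, since applying $\widehat{S}_k$ with $\psi_k<0$ and $k^{(-)}=0$ can decrease the constant term. Relatedly, ``the operators $\widehat{S}_k$ do not depend on $\lambda$'' is literally false ($\beta_k^{(-)}$ can involve $\lambda$); what you actually use, and what is true, is that the sign-branch chosen by $\widehat{S}_k$ depends only on $\psi_k$, which is $\lambda$-independent by induction on the length of the word in $\widehat{S}$. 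Second, for the inclusion $P\subset\mathcal{C}_{\bf i}^{(\lambda,w)}$, scaling a rational point of $P$ to an integer point only lands in $\mathcal{S}_{\bf i}^{(\lambda,w)}\subset\z_{>0}\times\z^r$ when the first coordinate is positive; points $(0,{\bf a})\in P$ need an extra step, e.g.\ write $(0,{\bf a})=\lim_{\epsilon\to 0^+}\bigl((0,{\bf a})+\epsilon(1,0,\ldots,0)\bigr)$, noting $(1,0,\ldots,0)\in P$ by ampleness, and then use closedness of $\mathcal{C}_{\bf i}^{(\lambda,w)}$. With these adjustments the proof is complete and matches the paper's intended argument.
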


\subsection{Relation with Kashiwara's involution}

This subsection is devoted to describing Kashiwara's involution $\ast$ in terms of valuations on the rational function field. We first recall the main result of \cite{Kav}. Let ${\bf i} = (i_r, \ldots, i_1)$ be a reduced word for $w \in W$, and $\Phi_{\bf i} :\mathcal{B}_w (\infty) \hookrightarrow \z^r$ the corresponding string parameterization (see \S\S 2.3). We set \[\widetilde{\mathcal{S}}_{\bf i} ^{(\lambda, w)} := \bigcup_{k>0} \{(k, \Phi_{\bf i} (b)) \mid b \in \widetilde{\mathcal{B}}_w (k\lambda)\} \subset \z_{>0} \times \z^r,\] and denote by $\widetilde{\mathcal{C}}_{\bf i} ^{(\lambda, w)} \subset \r_{\ge 0} \times \r^r$ the smallest real closed cone containing $\widetilde{\mathcal{S}}_{\bf i} ^{(\lambda, w)}$; here we write $\widetilde{\mathcal{B}}_w (k\lambda) := \mathcal{B}_w (\infty) \cap \widetilde{\mathcal{B}}(k\lambda)$.

\vspace{2mm}\begin{defi}[{see [Lit, Section 1] and [Kav, Definition 3.5]}]\normalfont
Let ${\bf i} \in I^r$ be a reduced word for $w \in W$, and $\lambda$ a dominant integral weight. Define a subset $\widetilde{\Delta}_{\bf i} ^{(\lambda, w)} \subset \r^r$ by \[\widetilde{\Delta}_{\bf i} ^{(\lambda, w)} := \{{\bf a} \in \r^r \mid (1, {\bf a}) \in \widetilde{\mathcal{C}}_{\bf i} ^{(\lambda, w)}\};\] this is called the {\it string polytope} associated to $\lambda$, $w$, and ${\bf i}$.
\end{defi}\vspace{2mm}

\begin{lem}[{see \cite[Section 1]{Lit}}]
For $(a_1, \ldots, a_r) \in \widetilde{\Delta}_{\bf i} ^{(\lambda, w)}$, there hold the inequalities \[0 \le a_r \le \langle \lambda, h_{i_r} \rangle,\ 0 \le a_{r-1} \le \langle \lambda -a_r \alpha_{i_r}, h_{i_{r-1}}\rangle, \ldots, 0 \le a_1 \le \langle\lambda -a_r \alpha_{i_r} - \cdots - a_2 \alpha_{i_2}, h_{i_1}\rangle.\] In particular, the string polytope $\widetilde{\Delta}_{\bf i} ^{(\lambda, w)}$ is bounded, and hence compact.
\end{lem}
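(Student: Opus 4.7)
Plan: This is a standard bound for string polytopes, essentially a restatement of Littelmann's original analysis that the paper already refers to. The plan is to produce, for each $l \in \{1, \ldots, r\}$, a closed linear inequality on $(k, a_1, \ldots, a_r) \in \r \times \r^r$ valid throughout $\widetilde{\mathcal{S}}_{\bf i}^{(\lambda, w)}$. Such closed half-space conditions automatically propagate to the closure $\widetilde{\mathcal{C}}_{\bf i}^{(\lambda, w)}$ and specialize at $k=1$ to the stated bounds on the $a_l$, after which compactness follows by a simple induction.

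Fix $k \in \z_{>0}$ and $b \in \widetilde{\mathcal{B}}_w(k\lambda)$ with $\Phi_{\bf i}(b) = (a_1, \ldots, a_r)$. By the definition of the string parameterization associated to ${\bf i} = (i_r, \ldots, i_1)$, the element $b$ admits a canonical expression as an iterated application of Kashiwara lowering operators $\tilde{f}_{i_l}^{a_l}$ to $b_\infty$ in the order prescribed by ${\bf i}$, together with the string condition that each relevant intermediate $\varepsilon_{i_l}$ vanishes. Applying $\pi_{k\lambda}$: by Proposition \ref{connection of crystals} (3) it commutes with every $\tilde{f}_j$, so the decomposition descends to $\mathcal{B}(k\lambda)$ with $b_\infty$ replaced by $b_{k\lambda}$. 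Since $b \in \widetilde{\mathcal{B}}(k\lambda)$ and further $\tilde{f}$-applications cannot revive a zero image, every intermediate element sits in $\widetilde{\mathcal{B}}(k\lambda)$ and projects under $\pi_{k\lambda}$ to a nonzero element of $\mathcal{B}(k\lambda)$ whose weight is $k\lambda - \sum_{j > l} a_j \alpha_{i_j}$. The remark following Proposition \ref{connection of crystals} gives $\varepsilon_{i_l}(\pi_{k\lambda}(c)) = \varepsilon_{i_l}(c)$, so the string condition persists in $\mathcal{B}(k\lambda)$, and hence $\varphi_{i_l}$ of the $l$-th intermediate element equals $\langle k\lambda - \sum_{j > l} a_j \alpha_{i_j}, h_{i_l}\rangle$. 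Demanding that $\tilde{f}_{i_l}^{a_l}$ applied to this intermediate not vanish then forces
\[ 0 \le a_l \le \langle k\lambda - \textstyle\sum_{j > l} a_j \alpha_{i_j},\, h_{i_l}\rangle. \]

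These are closed linear half-space conditions in $(k, a_1, \ldots, a_r)$, so they remain valid on the real closed cone $\widetilde{\mathcal{C}}_{\bf i}^{(\lambda, w)}$, and setting $k = 1$ recovers exactly the inequalities of the lemma. Compactness now follows by a straightforward induction on $l$ running from $r$ down to $1$: the bound $0 \le a_r \le \langle \lambda, h_{i_r}\rangle$ already confines $a_r$ to a bounded interval, and given bounds on $a_r, \ldots, a_{l+1}$ the inequality just derived confines $a_l$ to a finite interval as well. Hence $\widetilde{\Delta}_{\bf i}^{(\lambda, w)}$ is a closed bounded subset of $\r^r$, and is therefore compact.

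The main bookkeeping subtlety is matching the ordering convention for $\Phi_{\bf i}$---defined in the paper only by analogy with $\Phi_{{\bf i}^{\rm op}}$---to the specific ``reverse'' order of inequalities $a_r, a_{r-1}, \ldots, a_1$ in the lemma. Once this matching is fixed so that the $l$-th intermediate element in $\mathcal{B}(k\lambda)$ carries weight $k\lambda - \sum_{j > l} a_j \alpha_{i_j}$, the argument above is routine manipulation with the crystal axioms combined with elementary topology of closed convex cones.
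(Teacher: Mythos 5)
The paper provides no proof of this lemma (it simply cites Littelmann), so there is no in-paper argument to compare against; on its own terms your proof is correct and is the standard argument one expects from that reference. You fix $(k, \Phi_{\bf i}(b)) \in \widetilde{\mathcal S}_{\bf i}^{(\lambda,w)}$, pass the $\tilde f$-string decomposition of $b$ in $\mathcal B(\infty)$ through $\pi_{k\lambda}$ (using Proposition \ref{connection of crystals} (3), the observation that once a projection hits $0$ it stays $0$, and the fact that $\varepsilon_i$ is preserved by the remark after Proposition \ref{connection of crystals}), then read off $\varphi_{i_l}$ of each intermediate element and bound $a_l$ accordingly; this gives $2r$ homogeneous linear inequalities in $(k,\mathbf a)$, which therefore cut out a closed convex cone containing $\widetilde{\mathcal S}_{\bf i}^{(\lambda,w)}$ and hence containing $\widetilde{\mathcal C}_{\bf i}^{(\lambda,w)}$, and intersecting with $\{k=1\}$ gives the stated inequalities, from which boundedness follows by descending induction on $l$ and closedness is automatic. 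One small caution, which you already flag as the bookkeeping subtlety: the paper's definition of $\Phi_{\bf i}$ for ${\bf i} = (i_r,\dots,i_1)$ is made only by analogy with $\Phi_{{\bf i}^{\rm op}}$ in \S\S 2.3, and taken literally that analogy pairs $a_r$ with $\tilde f_{i_1}$ (applied first), which would yield $0 \le a_r \le \langle\lambda, h_{i_1}\rangle$, etc., rather than the $h_{i_r}, h_{i_{r-1}}, \dots$ as printed in the lemma; Example \ref{example1} (a palindrome, hence insensitive) is consistent with either reading, but the non-palindromic type $C_2$ example's explicit string polytope appears to follow the $h_{i_1}$-first pairing. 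Your argument is indifferent to which of the two pairings is intended — the derivation, the closure step, and the boundedness conclusion go through identically — so this does not affect correctness.
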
\vspace{2mm}

\begin{prop}[{see [BZ, \S\S 3.2 and Theorem 3.10]}]
Let ${\bf i} \in I^r$ be a reduced word for $w \in W$, and $\lambda$ a dominant integral weight. Then, the real closed cone $\widetilde{\mathcal{C}}_{\bf i} ^{(\lambda, w)}$ is a rational convex polyhedral cone, and the equality $\widetilde{\mathcal{S}}_{\bf i} ^{(\lambda, w)} = \widetilde{\mathcal{C}}_{\bf i} ^{(\lambda, w)} \cap (\z_{>0} \times \z^r)$ holds. In particular, the string polytope $\widetilde{\Delta}_{\bf i} ^{(\lambda, w)}$ is a rational convex polytope, and the equality $\Phi_{\bf i} (\widetilde{\mathcal{B}}_w(\lambda)) = \widetilde{\Delta}_{\bf i} ^{(\lambda, w)} \cap \z^r$ holds.
\end{prop}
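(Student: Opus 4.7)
The plan is to obtain a single finite family of linear inequalities in $(k,a_1,\dots,a_r) \in \r \times \r^r$ that cuts out $\widetilde{\mathcal{C}}_{\bf i}^{(\lambda,w)}$; the three asserted consequences then follow in a uniform way. This proof differs from that of Corollary \ref{a finite union} in that the present cone is a \emph{single} rational polyhedral cone rather than a finite union, reflecting the classical Littelmann--Berenstein--Zelevinsky description of string polytopes.

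First, I would invoke \cite[\S\S 3.2 and Theorem~3.10]{BZ} to record that the smallest real closed cone $\widetilde{\mathcal{C}}_{\bf i} \subset \r^r$ containing $\Phi_{\bf i}(\mathcal{B}_w(\infty))$ is a rational convex polyhedral cone given by an explicit finite list of linear inequalities, and that $\widetilde{\mathcal{C}}_{\bf i} \cap \z^r = \Phi_{\bf i}(\mathcal{B}_w(\infty))$. Then, using Proposition \ref{connection of crystals} (3) together with Lemma \ref{piecewise-linear inequality}, I would characterize $\widetilde{\mathcal{B}}_w(k\lambda)$ inside $\mathcal{B}_w(\infty)$ as the subset of $b$ satisfying $\varepsilon_i^\ast(b) \le \langle k\lambda, h_i\rangle$ for all $i \in I$. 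The crucial step, following \cite[Section~1]{Lit}, is that for $b = \tilde{f}_{i_1}^{a_1} \cdots \tilde{f}_{i_r}^{a_r} b_\infty$ corresponding to $(a_1,\dots,a_r) \in \widetilde{\mathcal{C}}_{\bf i} \cap \z^r$, the condition $\varepsilon_i^\ast(b) \le \langle k\lambda, h_i\rangle$ for all $i$ is equivalent, on the string cone, to the ``dominance'' inequalities
\[
0 \le a_r \le \langle k\lambda, h_{i_r}\rangle, \quad 0 \le a_{r-1} \le \langle k\lambda - a_r \alpha_{i_r}, h_{i_{r-1}}\rangle, \quad \ldots,
\]
which are genuinely linear in $(k, a_1, \dots, a_r)$. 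Combining these with the string-cone inequalities yields a single finite list of linear inequalities defining a rational convex polyhedral cone $\mathcal{K} \subset \r_{\ge 0} \times \r^r$ with $\mathcal{K} \cap (\z_{>0} \times \z^r) = \widetilde{\mathcal{S}}_{\bf i}^{(\lambda,w)}$ by construction.

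It remains to identify $\mathcal{K}$ with $\widetilde{\mathcal{C}}_{\bf i}^{(\lambda,w)}$: the inclusion $\widetilde{\mathcal{C}}_{\bf i}^{(\lambda,w)} \subset \mathcal{K}$ is immediate since $\mathcal{K}$ is closed, while for the reverse inclusion I would note that every rational point of $\mathcal{K}$ is, after scaling by a suitable positive integer, an integer point of $\mathcal{K}$ and hence an element of $\widetilde{\mathcal{S}}_{\bf i}^{(\lambda,w)}$; since rational points are dense in $\mathcal{K}$ (as it is rational polyhedral), taking closures gives $\mathcal{K} \subset \widetilde{\mathcal{C}}_{\bf i}^{(\lambda,w)}$. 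The equality $\widetilde{\mathcal{S}}_{\bf i}^{(\lambda,w)} = \widetilde{\mathcal{C}}_{\bf i}^{(\lambda,w)} \cap (\z_{>0} \times \z^r)$ and the polytopal statements for $\widetilde{\Delta}_{\bf i}^{(\lambda,w)}$ follow by slicing at $k=1$, using the preceding lemma for compactness. The main obstacle is the linearization step: verifying that $\varepsilon_i^\ast$ really reduces to the simple dominance form on the string cone is where the single-cone phenomenon enters, and it relies on the explicit Littelmann combinatorics rather than the generic piecewise-linear description of Proposition \ref{piecewise-linear for epsilon}.
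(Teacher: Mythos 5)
The paper gives no proof of this Proposition: it is stated as a citation of [BZ, \S\S 3.2 and Theorem 3.10] and [Lit, Section 1] (together with the boundedness Lemma just before it). What you have written is therefore a reconstruction of the cited argument, and the overall structure — build a rational polyhedral cone $\mathcal{K}$ out of the string-cone inequalities together with linear $\lambda$-dependent inequalities, check it meets $\z_{>0}\times\z^r$ in exactly $\widetilde{\mathcal{S}}_{\bf i}^{(\lambda,w)}$, and then pass back and forth between $\mathcal{K}$ and $\widetilde{\mathcal{C}}_{\bf i}^{(\lambda,w)}$ by a density-and-closure argument — is the right shape.

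Two things deserve adjustment. First, your phrasing of the crucial step inverts the logical order. Littelmann does not prove that $\varepsilon_i^\ast(b)\le\langle k\lambda,h_i\rangle$ ``reduces'' to the dominance inequalities; rather, he parameterizes $\Phi_{\bf i}(\widetilde{\mathcal{B}}_w(k\lambda))$ directly. Writing $b=\tilde f_{i_r}^{a_1}\tilde f_{i_{r-1}}^{a_2}\cdots\tilde f_{i_1}^{a_r}b_\infty$ (note the indexing, which differs from what you wrote), one applies the $\tilde f$'s to $b_{k\lambda}$ from the inside out; at each stage the string condition forces $\varepsilon_{i_{k}}$ to vanish on the partial product, so $\varphi_{i_k}=\langle\text{wt},h_{i_k}\rangle$ there, and the nonvanishing conditions become exactly the dominance inequalities — which are \emph{linear} in $(k,a_1,\dots,a_r)$. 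The equivalence with the Kashiwara criterion $\varepsilon_i^\ast(b)\le\langle k\lambda,h_i\rangle$ of Lemma \ref{piecewise-linear inequality} is then a \emph{consequence} of both conditions cutting out the same subset, not an input; $\varepsilon_i^\ast$ itself remains only piecewise-linear in string coordinates, which is precisely why the single-polytope phenomenon cannot be seen through it. Second, in the final closure step you should be a shade more careful: the relevant density is of rational points of $\mathcal{K}$ with positive first coordinate, which holds because $\mathcal{K}$ is a rational polyhedral cone containing at least one such point (e.g. $(1,0,\dots,0)$ coming from $b_{\lambda}$). With those clarifications the argument is sound and faithfully fills in what the paper delegates to [BZ] and [Lit].
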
\vspace{2mm}

We identify the function field $\c(X(w))$ with the rational function field $\c(t_r, \ldots, t_1)$ as in \S\S 3.2, and define a valuation $\tilde{v}_{\bf i}$ on $\c(X(w))$ to be the highest term valuation on $\c(t_r, \ldots, t_1)$ with respect to the lexicographic order $t_r \succ \cdots \succ t_1$ (see Example \ref{highest term valuation}). The following is the main result of \cite{Kav}.

\vspace{2mm}\begin{prop}[{see \cite[Theorem 4.1, Corollary 4.2, and Remark 4.6]{Kav}}]\label{string polytopes}
Let ${\bf i} \in I^r$ be a reduced word for $w \in W$, and $\lambda$ a dominant integral weight. 
\begin{enumerate}
\item[{\rm (1)}] The string parameterization $\Phi_{\bf i} (b)$ is equal to $-\tilde{v}_{\bf i} (G^{\rm up} _{\lambda, w} (\pi_\lambda (b))/\tau_\lambda)$ for all $b \in \widetilde{\mathcal{B}}_w(\lambda)$.
\item[{\rm (2)}] Define the linear automorphism $\omega: \r \times \r^r \xrightarrow{\sim} \r \times \r^r$ by $\omega(k, {\bf a}) = (k, -{\bf a})$. Then, $\widetilde{\mathcal{S}}_{\bf i} ^{(\lambda, w)} = \omega(S(X(w), \mathcal{L}_\lambda, \tilde{v}_{\bf i}, \tau_\lambda))$, $\widetilde{\mathcal{C}}_{\bf i} ^{(\lambda, w)} = \omega(C(X(w), \mathcal{L}_\lambda, \tilde{v}_{\bf i}, \tau_\lambda))$, and $\widetilde{\Delta}_{\bf i} ^{(\lambda, w)} = -\Delta(X(w), \mathcal{L}_\lambda, \tilde{v}_{\bf i}, \tau_\lambda)$.
\end{enumerate}
\end{prop}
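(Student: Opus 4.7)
The plan is to mimic the structure of the proof of Theorem \ref{main thm}, exchanging the lexicographic orders $t_1 > \cdots > t_r$ and $t_r \succ \cdots \succ t_1$, and correspondingly swapping the roles of the right and left actions of $U(\mathfrak{u}^-)$ on $\c[U^-]$. The symmetry is clean: the right action of $F_{i_1}$ (through the last factor of the Bott–Samelson parametrization) equals $-\partial/\partial t_1$, so it extracts the top variable of $v_{\bf i}$; dually, the left action of $F_{i_r}$ (through the first factor, via $u \cdot (p_r,\ldots,p_1)=(up_r,p_{r-1},\ldots,p_1)$) equals $-\partial/\partial t_r$, so it should extract the top variable of $\tilde{v}_{\bf i}$.

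First I would establish the analog of Proposition \ref{valuation Chevalley}: for $f \in \c[t_r,\ldots,t_1]$ with $\tilde{v}_{\bf i}(f) = -(a_r,\ldots,a_1)$, the component $a_r$ equals $\max\{a \in \z_{\ge 0} \mid F_{i_r}^a \cdot f \neq 0\}$ (left action), and $F_{i_r}^{a_r}\cdot f$ is independent of $t_r$, so it is identified with the restriction of $f$ to the Schubert subvariety $X(w_{\le r-1}) \subset X(w)$, where $w_{\le k} := s_{i_k}\cdots s_{i_1}$. Iterating this procedure in the order $k = r, r-1, \ldots, 1$ yields the full tuple $(a_r,\ldots,a_1)$ as a sequence of nested $\max$'s.

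Next I would prove Part (1) when $w = w_0$, in parallel with Proposition \ref{Verma case}. Using the bimodule isomorphism $\Upsilon: U(\mathfrak{u}^-)^\ast_{\rm gr}\xrightarrow{\sim}\c[U^-]$ and Lemma \ref{lemma2}, the polynomial $G^{\rm up}_\lambda(\pi_\lambda(b))/\tau_\lambda$ coincides with $\Upsilon(G^{\rm up}(b))$ on $U^-$. The first formula of Lemma \ref{lemma3} gives
\[F_{i_r}^{\,\varepsilon_{i_r}(b)}\cdot G^{\rm up}(b) = \pm\,\varepsilon_{i_r}(b)!\cdot G^{\rm up}\!\bigl(\tilde e_{i_r}^{\,\varepsilon_{i_r}(b)}b\bigr),\]
while higher powers of $F_{i_r}$ annihilate $G^{\rm up}(b)$. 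Combined with Step 1, this forces $a_r=\varepsilon_{i_r}(b)$; after setting $t_r=0$ we are left with $\pm\Upsilon(G^{\rm up}(\tilde e_{i_r}^{a_r}b))$ and may iterate. The resulting recursion $a_{k}=\varepsilon_{i_k}(\tilde e_{i_{k+1}}^{a_{k+1}}\cdots\tilde e_{i_r}^{a_r}b)$ is precisely the definition of the string parameterization $\Phi_{\bf i}$. For general $w$, extend ${\bf i}$ to a reduced word $\hat{\bf i}$ for $w_0$ and use the identification $(G^{\rm up}_{\lambda}(b)/\tau_\lambda)|_{X(w)\cap U^-} = G^{\rm up}_{\lambda,w}(b)/\tau_\lambda$ (from the duality between $V_w(\lambda)\hookrightarrow V(\lambda)$ and its restriction map) exactly as in the final step of the proof of Theorem \ref{main thm}; the top term of $G^{\rm up}_\lambda(\pi_\lambda(b))/\tau_\lambda$ with respect to $\tilde v_{\hat{\bf i}}$ lies in the subring $\c[t_r,\ldots,t_1]$, and thus equals the corresponding top term on $X(w)$. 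Part (2) then follows from Part (1) by Proposition \ref{prop1,val} (2) (the values $\Phi_{\bf i}(b)$ are pairwise distinct), mirroring Corollaries \ref{corollary1} and \ref{corollary}.

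The main obstacle is Step 1, in particular justifying that after applying $F_{i_r}^{a_r}$ and formally setting $t_r=0$, the remaining polynomial indeed corresponds to the geometric restriction to $X(s_{i_{r-1}}\cdots s_{i_1})\subset X(w)$ under a compatible Bott–Samelson factorization. Unlike the situation of Proposition \ref{valuation Chevalley}, where peeling off $t_1$ and restricting to $X(w_{\ge 2})$ fits naturally into the Bott–Samelson tower $Z_{(i_r,\ldots,i_k)}\twoheadrightarrow X(w_{\ge k})$, here the relevant subvariety $X(w_{\le r-1})$ comes from the opposite end of the reduced word, and the analogous Bott–Samelson resolution must be set up carefully; verifying compatibility of the left $U^-_{i_r}$-action with this chain is the delicate geometric input that underlies the whole argument.
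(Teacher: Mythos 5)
The paper attributes this proposition to Kaveh (\cite[Theorem 4.1, Corollary 4.2, Remark 4.6]{Kav}) and does not give a proof of its own, so there is nothing in the paper to compare against directly. Your plan to reproduce it by mirroring the proof of Theorem \ref{main thm} under the symmetry exchanging the right and left $U(\mathfrak{u}^-)$-actions on $\c[U^-]$ is a natural one, and Steps 1--2 as you sketch them are essentially sound; the geometric compatibility you flag as the ``main obstacle'' in Step 1 does in fact hold, since the product map $U^-_{i_r}\times\cdots\times U^-_{i_1}\to X(w)\cap U^-$ is equivariant for the left $U^-_{i_r}$-actions, so that concern is legitimate but not fatal.

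Step 3, however, is wrong, not merely delicate. The assertion that for the prepended extension $\hat{\bf i}=(i_N,\ldots,i_{r+1},i_r,\ldots,i_1)$ the top term of $G^{\rm up}_{\lambda}(\pi_\lambda(b))/\tau_\lambda$ with respect to $\tilde v_{\hat{\bf i}}$ lies in $\c[t_r,\ldots,t_1]$ is false. Take $G=SL_3$, $\hat{\bf i}=(1,2,1)$, $w=s_1$ (so $r=1$), and $b=\tilde f_1^{a} b_\infty$ with $a>0$. By Example \ref{example1}, $\c[U^-]=\c[t_1+t_3,\,t_2,\,t_2t_3]$, and $\Upsilon(G^{\rm up}(b))$ is a nonzero multiple of $(t_1+t_3)^a$; its top term with respect to $t_3\succ t_2\succ t_1$ is $t_3^a$, which is not in $\c[t_1]$. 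The analogy with Theorem \ref{main thm} breaks down precisely here: the reason the $v_{\hat{\bf i}}$-top term stays in $\c[t_r,\ldots,t_1]$ is Proposition \ref{polyhedral realizations}~(2), which says the Kashiwara embedding of $\mathcal{B}_w(\infty)$ has vanishing entries past position $r$. The analogous vanishing for the string parameterization $\Phi_{\hat{\bf i}}$ on $\mathcal{B}_w(\infty)$ fails, because $\Phi_{\hat{\bf i}}$ begins by peeling off $\tilde e_{i_N},\ldots,\tilde e_{i_{r+1}}$, which are exactly the newly added indices. The reduction can be repaired either by extending ${\bf i}$ on the opposite end (appending a reduced word for $w^{-1}w_0$ after $i_1$, so that $i_r,\ldots,i_1$ are the first $r$ indices peeled), or, more cleanly, by skipping $G/B$ entirely and iterating your Step 1 directly on $\c[X(w)\cap U^-]$ using Lemma \ref{lemma3}; in the latter case you must additionally invoke the standard fact that $\tilde e_{i_r}^{\rm max}$ carries $\mathcal{B}_w(\infty)$ into $\mathcal{B}_{s_{i_r}w}(\infty)$, which your sketch does not supply.
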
\vspace{2mm}

In the rest of this subsection, we consider the case that $w = w_0$. For a reduced word ${\bf i} = (i_N, \ldots, i_1)$ for $w_0$, Lemma \ref{lemma2} and Proposition \ref{string polytopes} (1) imply that \[\Phi_{\bf i}(b) = -\tilde{v}_{\bf i}(G^{\rm up}(b))\] for all $b \in \mathcal{B}(\infty)$, where we identify $G^{\rm up}(b) \in U(\mathfrak{u}^-)_{\rm gr} ^\ast$ with $\Upsilon(G^{\rm up}(b)) \in \c[U^-]$ ($\subset \c[t_N, \ldots, t_1]$). Since $\Phi_{\bf i}(b)$, $b \in \mathcal{B}(\infty)$, are all distinct, we deduce from Proposition \ref{prop1,val} (2) that \[\Phi_{\bf i} (\mathcal{B}(\infty)) = -\tilde{v}_{\bf i} (\c[U^-] \setminus \{0\}).\] Now the equality $\Phi_{\bf i} = \Psi_{{\bf i}^{\rm op}} \circ \ast$ (by Proposition \ref{star string} (2)) implies that this set is also identical to the set \[\Psi_{{\bf i}^{\rm op}} (\mathcal{B}(\infty)) = -v_{{\bf i}^{\rm op}} (\c[U^-] \setminus \{0\})\quad({\rm by\ Corollary}\ \ref{cor1}),\] where ${\bf i}^{\rm op} := (i_1, \ldots, i_N)$, which is a reduced word for $w_0 ^{-1} = w_0$. Let $\eta_{\bf i}: \Psi_{\bf i} (\mathcal{B}(\infty)) \rightarrow \Phi_{{\bf i}^{\rm op}} (\mathcal{B}(\infty))$ ($= \Psi_{\bf i} (\mathcal{B}(\infty))$) be the transition map given by $\eta_{\bf i} (\Psi_{\bf i} (b)) = \Phi_{{\bf i}^{\rm op}} (b)$ for $b \in \mathcal{B}(\infty)$. Then, we see that Kashiwara's involution $\ast$ corresponds to the map $\eta_{\bf i}$ through the Kashiwara embedding $\Psi_{\bf i}$: \[b^\ast = \Psi_{\bf i} ^{-1} \circ \eta_{\bf i} \circ \Psi_{\bf i}(b)\] for all $b \in \mathcal{B}(\infty)$. Take an extension $\tilde{\bf i} = (\ldots, i_k, \ldots, i_{N+1}, i_N, \ldots, i_1)$ of ${\bf i}$ as in \S\S 2.3. It follows from the definition of the crystal $\z^\infty _{\tilde{\bf i}}$ that 
\begin{align*}
\tilde{e}_i ^{\rm max}{\bf a} &= \bigl(a_k - \delta_{i, i_k}\bigl(a_k - \varphi_i({\bf a}_{\ge k+1})\bigr)\bigr)_{k \ge 1}\\
&= \bigl((1 - \delta_{i, i_k}) a_k + \delta_{i, i_k} \bigl(\sigma^{(i)}({\bf a}_{\ge k+1}) + \langle {\rm wt}({\bf a}_{\ge k+1}), h_i\rangle\bigr)\bigr)_{k \ge 1}
\end{align*}
for $i \in I$ and ${\bf a} = (\ldots, a_k, \ldots, a_2, a_1) \in \z^\infty _{\tilde{\bf i}}$, where we set ${\bf a}_{\ge k} := (\ldots, a_{k+1}, a_k) \in \z^\infty _{(\ldots, i_{k+1}, i_k)}$ for $k \ge 1$, and $\tilde{e}_i ^{\rm max}{\bf a} := \tilde{e}_i ^{\varepsilon_i({\bf a})}{\bf a}$. Also, if we set ${\bf a}^\prime := (\ldots, 0, 0, a_N, \ldots, a_1) \in \z^\infty _{\tilde{\bf i}}$ for $(a_1, \ldots, a_N) \in \Psi_{\bf i} (\mathcal{B}(\infty))$, then we deduce that 
\begin{align*}
\eta_{\bf i}((a_1, \ldots, a_N)) &= \Phi_{{\bf i}^{\rm op}}(\Psi_{\tilde{\bf i}} ^{-1}({\bf a}^\prime))\\
&({\rm by\ Remark}\ \ref{definition1})\\ 
&= (\varepsilon_{i_1}(\Psi_{\tilde{\bf i}} ^{-1}({\bf a}^\prime)), \varepsilon_{i_2}(\tilde{e}_{i_1} ^{\rm max}\Psi_{\tilde{\bf i}} ^{-1}({\bf a}^\prime)), \ldots, \varepsilon_{i_N}(\tilde{e}_{i_{N-1}} ^{\rm max} \cdots \tilde{e}_{i_1} ^{\rm max}\Psi_{\tilde{\bf i}} ^{-1}({\bf a}^\prime)))\\
&({\rm by\ the\ definition\ of}\ \Phi_{{\bf i}^{\rm op}})\\
&= (\varepsilon_{i_1}({\bf a}^\prime), \varepsilon_{i_2}(\tilde{e}_{i_1} ^{\rm max}{\bf a}^\prime), \ldots, \varepsilon_{i_N}(\tilde{e}_{i_{N-1}} ^{\rm max} \cdots \tilde{e}_{i_1} ^{\rm max}{\bf a}^\prime))\\
&({\rm since}\ \Psi_{\tilde{\bf i}}\ {\rm is\ a\ strict\ embedding\ of\ crystals})\\
&= (\sigma^{(i_1)}({\bf a}^\prime), \sigma^{(i_2)}(\tilde{e}_{i_1} ^{\rm max}{\bf a}^\prime), \ldots, \sigma^{(i_N)}(\tilde{e}_{i_{N-1}} ^{\rm max} \cdots \tilde{e}_{i_1} ^{\rm max}{\bf a}^\prime))\\
&({\rm by\ the\ definition\ of\ the\ crystal\ structure\ on}\ \z^\infty _{\tilde{\bf i}}).
\end{align*}
From these, it follows that the map $\eta_{\bf i}: \Psi_{\bf i} (\mathcal{B}(\infty)) \rightarrow \Psi_{\bf i} (\mathcal{B}(\infty))$ is naturally extended to a piecewise-linear map from the string cone $\mathcal{C}_{\bf i}$ (see \S\S 2.3 for the definition) to itself, which is also denoted by $\eta_{\bf i}$; we see from the equality $\mathcal{C}_{\bf i} \cap \z^N = \Psi_{\bf i} (\mathcal{B}(\infty))$ that such an extension is unique. Since $\ast^2 =\ {\rm id}_{\mathcal{B}(\infty)}$, we see that $(\eta_{\bf i} |_{\Psi_{\bf i} (\mathcal{B}(\infty))})^2 =\ {\rm id}_{\Psi_{\bf i} (\mathcal{B}(\infty))}$, and hence that $\eta_{\bf i} ^2 =\ {\rm id}_{\mathcal{C}_{\bf i}}$. Thus, we obtain the following.

\vspace{2mm}\begin{cor}
Let ${\bf i}$ be a reduced word for $w_0$, and $\eta_{\bf i}: \mathcal{C}_{\bf i} \rightarrow \mathcal{C}_{\bf i}$ a unique piecewise-linear map such that $b^\ast = \Psi_{\bf i} ^{-1} \circ \eta_{\bf i} \circ \Psi_{\bf i}(b)$ for all $b \in \mathcal{B}(\infty)$.
\begin{enumerate}
\item[{\rm (1)}] The map $\eta_{\bf i}$ corresponds to the change of valuations from $v_{\bf i}$ to $\tilde{v}_{{\bf i}^{\rm op}}$$:$ \[\eta_{\bf i} (-v_{\bf i}(G^{\rm up} (b))) = -\tilde{v}_{{\bf i}^{\rm op}}(G^{\rm up} (b))\ {\it for\ all}\ b \in \mathcal{B}(\infty).\] 
\item[{\rm (2)}] The equality $\eta_{\bf i} ^2 = {\rm id}_{\mathcal{C}_{\bf i}}$ holds. 
\item[{\rm (3)}] The map $\eta_{\bf i}$ induces a bijective piecewise-linear map from the polyhedral realization $\Delta_{\bf i} ^{(\lambda, w_0)} = -\Delta(G/B, \mathcal{L}_\lambda, v_{\bf i}, \tau_{\lambda})$ onto the string polytope $\widetilde{\Delta}_{{\bf i}^{\rm op}} ^{(\lambda, w_0)} = -\Delta(G/B, \mathcal{L}_\lambda, \tilde{v}_{{\bf i}^{\rm op}}, \tau_{\lambda})$ for each dominant integral weight $\lambda$.
\end{enumerate}
\end{cor}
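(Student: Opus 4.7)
The plan is to treat the three assertions in sequence, reducing each to its integer-point version and then lifting by piecewise-linearity on the rational polyhedral cone $\mathcal{C}_{\bf i}$. The construction of $\eta_{\bf i}$ as a piecewise-linear self-map of $\mathcal{C}_{\bf i}$ is already in place in the paragraph preceding the corollary, so only the three matching statements remain.

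For (1), by the defining equation $\eta_{\bf i}(\Psi_{\bf i}(b)) = \Phi_{{\bf i}^{\rm op}}(b)$ for $b \in \mathcal{B}(\infty)$, it suffices to identify $\Psi_{\bf i}(b) = -v_{\bf i}(G^{\rm up}(b))$ (which is Proposition \ref{Verma case}) and $\Phi_{{\bf i}^{\rm op}}(b) = -\tilde{v}_{{\bf i}^{\rm op}}(G^{\rm up}(b))$ (which follows from Lemma \ref{lemma2} in the case $w = w_0$ together with Proposition \ref{string polytopes} (1) applied to the reduced word ${\bf i}^{\rm op}$; this identity is in fact already extracted at the start of Section 5.2). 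For (2), $\ast^2 = \mathrm{id}_{\mathcal{B}(\infty)}$ combined with the injectivity of $\Psi_{\bf i}$ gives $(\eta_{\bf i})^2 = \mathrm{id}$ on the lattice of integer points $\Psi_{\bf i}(\mathcal{B}(\infty)) = \mathcal{C}_{\bf i} \cap \z^N$; since $(\eta_{\bf i})^2$ is itself piecewise-linear on rational polyhedral pieces whose integer points affinely $\q$-span them, the uniqueness of a piecewise-linear extension from integer points (invoked also in the text to construct $\eta_{\bf i}$) upgrades this to the identity on all of $\mathcal{C}_{\bf i}$.

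For (3), the identifications $\Delta_{\bf i}^{(\lambda, w_0)} = -\Delta(G/B, \mathcal{L}_\lambda, v_{\bf i}, \tau_\lambda)$ (from Corollary \ref{corollary1}) and $\widetilde{\Delta}_{{\bf i}^{\rm op}}^{(\lambda, w_0)} = -\Delta(G/B, \mathcal{L}_\lambda, \tilde{v}_{{\bf i}^{\rm op}}, \tau_\lambda)$ (from Proposition \ref{string polytopes} (2)) reduce the claim to showing $\eta_{\bf i}(\Delta_{\bf i}^{(\lambda, w_0)}) = \widetilde{\Delta}_{{\bf i}^{\rm op}}^{(\lambda, w_0)}$ bijectively. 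I plan to consider the obvious piecewise-linear self-map $\tilde{\eta}_{\bf i}: \r_{\ge 0} \times \mathcal{C}_{\bf i} \to \r_{\ge 0} \times \mathcal{C}_{\bf i}$, $(k, {\bf a}) \mapsto (k, \eta_{\bf i}({\bf a}))$. On an integer point $(k, \Psi_{\bf i}^{(k\lambda)}(b))$ of $\mathcal{S}_{\bf i}^{(\lambda, w_0)}$, the identity $\Psi_{\bf i}^{(k\lambda)}(b) = \Psi_{\bf i}(\pi_{k\lambda}^{-1}(b))$ (recalled at the end of Section 4.2) together with part (1) gives $\tilde{\eta}_{\bf i}(k, \Psi_{\bf i}^{(k\lambda)}(b)) = (k, \Phi_{{\bf i}^{\rm op}}(\pi_{k\lambda}^{-1}(b))) \in \widetilde{\mathcal{S}}_{{\bf i}^{\rm op}}^{(\lambda, w_0)}$, and the bijectivity of $\pi_{k\lambda}: \widetilde{\mathcal{B}}(k\lambda) \xrightarrow{\sim} \mathcal{B}(k\lambda)$ produces a set-theoretic bijection $\mathcal{S}_{\bf i}^{(\lambda, w_0)} \xrightarrow{\sim} \widetilde{\mathcal{S}}_{{\bf i}^{\rm op}}^{(\lambda, w_0)}$. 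The principal obstacle is upgrading this integer-point bijection to a bijection of the real cones $\mathcal{C}_{\bf i}^{(\lambda, w_0)}$ and $\widetilde{\mathcal{C}}_{{\bf i}^{\rm op}}^{(\lambda, w_0)}$: here one exploits that both are finite unions of rational convex polyhedral cones (Corollary \ref{corollary} (2) and the string-polytope analog), so each linear piece of $\tilde{\eta}_{\bf i}$ is determined by its integer-point values, while part (2) supplies the inverse on the target side. Slicing the resulting cone bijection at $k = 1$ yields (3).
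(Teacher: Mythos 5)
Your proposal is correct and follows the same route as the paper for parts (1) and (2): both are established in the paragraph that precedes the corollary, where the identity $\Phi_{\bf i}(b)=-\tilde{v}_{\bf i}(G^{\rm up}(b))$ is derived from Lemma~\ref{lemma2} and Proposition~\ref{string polytopes}~(1), the transition map is defined by $\eta_{\bf i}(\Psi_{\bf i}(b))=\Phi_{{\bf i}^{\rm op}}(b)$, a piecewise-linear formula for $\eta_{\bf i}$ on $\mathcal{C}_{\bf i}$ is produced, and $\eta_{\bf i}^2=\mathrm{id}_{\mathcal{C}_{\bf i}}$ is deduced exactly as you say from $\ast^2=\mathrm{id}$ and uniqueness of the piecewise-linear extension.

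For part (3) the paper supplies no explicit argument, so what you do there is your own reasonable filling-in rather than a deviation. Your reduction to the lattice-point bijection $\mathcal{S}_{\bf i}^{(\lambda,w_0)}\xrightarrow{\sim}\widetilde{\mathcal{S}}_{{\bf i}^{\rm op}}^{(\lambda,w_0)}$, using $\Psi_{\bf i}^{(k\lambda)}(\pi_{k\lambda}(b))=\Psi_{\bf i}(b)$ and the bijectivity of $\pi_{k\lambda}$, is correct. The step you label the ``principal obstacle'' would be more convincingly phrased, however: the phrase ``each linear piece of $\tilde{\eta}_{\bf i}$ is determined by its integer-point values'' is about reconstructing the \emph{map}, not about matching the two subcones. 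The clean upgrade is that $\eta_{\bf i}$ is positively homogeneous and (by part (2)) a piecewise-linear homeomorphism of $\mathcal{C}_{\bf i}$, so $\tilde{\eta}_{\bf i}(k,{\bf a})=(k,\eta_{\bf i}({\bf a}))$ is a positively-homogeneous homeomorphism of $\r_{\ge 0}\times\mathcal{C}_{\bf i}$; such a map sends the smallest closed cone containing $\mathcal{S}_{\bf i}^{(\lambda,w_0)}$ onto the smallest closed cone containing its image, i.e.\ onto $\widetilde{\mathcal{C}}_{{\bf i}^{\rm op}}^{(\lambda,w_0)}$, and slicing at $k=1$ finishes. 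Also a small citation slip: Corollary~\ref{corollary}~(2) already says these cones are rational convex polyhedral cones, not merely finite unions of such (the ``finite union'' statement is the weaker Section~2 version, Corollary~\ref{a finite union}~(2), before convexity is known).
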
\vspace{2mm}

\begin{ex}\normalfont\label{example1}
If $G = SL_3(\c)$ (of type $A_2$) and ${\bf i} = (1, 2, 1)$, a reduced word for $w_0$, then we deduce from \cite[Theorem 3.1]{NZ} that the polyhedral realization $\Psi_{\bf i} (\mathcal{B}(\infty))$ is identical to the semigroup \[\{(a_1, a_2, a_3) \in \z_{\ge 0} ^3 \mid a_2 \ge a_3\}.\] Recall that the coordinate ring $\c[U^-]$ is regarded as a $\c$-subalgebra of the polynomial ring $\c[t_3, t_2, t_1]$ by using the birational morphism
\[\c^3 \rightarrow U^-,\ (t_3, t_2, t_1) \mapsto \exp(t_3 F_1)\exp(t_2 F_2)\exp(t_1 F_1),\] where we set \[F_1 := \begin{pmatrix}
0 & 0 & 0\\
1 & 0 & 0 \\
0 & 0 & 0
\end{pmatrix},\ F_2 := \begin{pmatrix}
0 & 0 & 0\\
0 & 0 & 0 \\
0 & 1 & 0
\end{pmatrix}.\] Since we have \[\exp(t_3 F_1)\exp(t_2 F_2)\exp(t_1 F_1) =
\begin{pmatrix}
1 & 0 & 0\\
t_1 + t_3 & 1 & 0 \\
t_2t_3 & t_2 &1
\end{pmatrix},\] the coordinate ring $\c[U^-]$ is identical to the $\c$-subalgebra $\c[t_1 + t_3, t_2, t_2 t_3]$ ($\subset \c[t_3, t_2, t_1]$); hence we see from the definition of $v_{\bf i}$ that \[-v_{\bf i} (\c[U^-] \setminus \{0\}) = \{(a_1, a_2, a_3) \in \z_{\ge 0} ^3 \mid a_2 \ge a_3\},\] which is indeed identical to $\Psi_{\bf i} (\mathcal{B}(\infty))$. Now it follows from the definition of the crystal $\z^\infty _{\tilde{\bf i}}$ that 
\begin{align*}
&\varepsilon_1({\bf a}) = \max\{a_3, a_1 - a_2 + 2 a_3\},\ \tilde{e}_1 ^{\max} {\bf a} = (\min\{a_1, a_2 - a_3\}, a_2, 0),\\
&\varepsilon_2(\tilde{e}_1 ^{\max} {\bf a}) = a_2,\ \tilde{e}_2 ^{\max} \tilde{e}_1 ^{\max} {\bf a} = (\min\{a_1, a_2 - a_3\}, 0, 0),\ {\rm and}\\
&\varepsilon_1(\tilde{e}_2 ^{\max} \tilde{e}_1 ^{\max} {\bf a}) = \min\{a_1, a_2 - a_3\}
\end{align*}
for ${\bf a} = (a_1, a_2, a_3) \in \Psi_{\bf i} (\mathcal{B}(\infty))$. Therefore, we obtain \[\eta_{\bf i} ({\bf a}) = (\max\{a_3, a_1 - a_2 + 2 a_3\}, a_2, \min\{a_1, a_2 - a_3\}).\] This piecewise-linear map induces a bijection from the polyhedral realization $\Delta_{\bf i} ^{(\lambda, w_0)}$ onto the string polytope $\widetilde{\Delta}_{{\bf i}^{\rm op}} ^{(\lambda, w_0)} = \widetilde{\Delta}_{\bf i} ^{(\lambda, w_0)}$. Indeed, by using Corollary \ref{explicit description}, we deduce that \[\Delta_{\bf i} ^{(\lambda, w_0)} = \{(a_1, a_2, a_3) \in \r^3 \mid 0 \le a_1 \le \lambda_1,\ 0 \le a_3 \le \lambda_2,\ a_3 \le a_2 \le a_1 + \lambda_2\},\] where $\lambda_i := \langle \lambda, h_i\rangle$ for $i = 1, 2$. In particular, if $\lambda = \alpha_1 + \alpha_2$, then we have \[\Delta_{\bf i} ^{(\alpha_1 + \alpha_2, w_0)} = \{(a_1, a_2, a_3) \in \r^3 \mid 0 \le a_1 \le 1,\ 0 \le a_3 \le 1,\ a_3 \le a_2 \le a_1 + 1\};\] see the figure below. 
\begin{center}
\includegraphics[width=3.0cm]{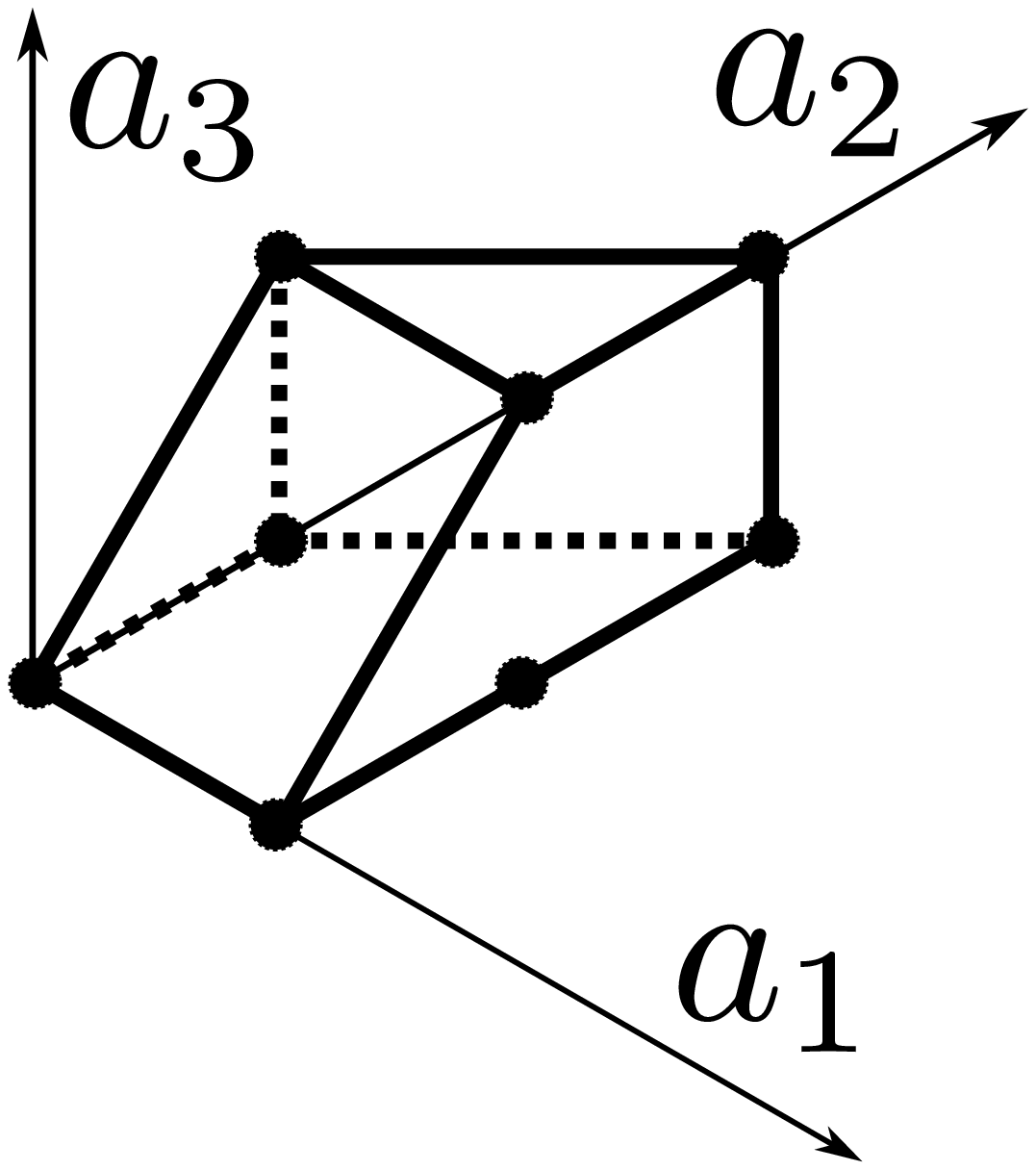}
\end{center}
Also, we deduce from \cite[Section 1]{Lit} or \cite[Theorem 3.10]{BZ} that \[\widetilde{\Delta}_{\bf i} ^{(\lambda, w_0)} = \{(a_1, a_2, a_3) \in \r^3 \mid 0 \le a_3 \le \lambda_1,\ a_3 \le a_2 \le a_3 + \lambda_2,\ 0 \le a_1 \le a_2 - 2 a_3 + \lambda_1\}.\] In particular, if $\lambda = \alpha_1 + \alpha_2$, then we have \[\widetilde{\Delta}_{\bf i} ^{(\alpha_1 + \alpha_2, w_0)} = \{(a_1, a_2, a_3) \in \r^3 \mid 0 \le a_3 \le 1,\ a_3 \le a_2 \le a_3 + 1,\ 0 \le a_1 \le a_2 - 2 a_3 +1\};\] see the figure below. 
\begin{center}
\includegraphics[width=3.0cm]{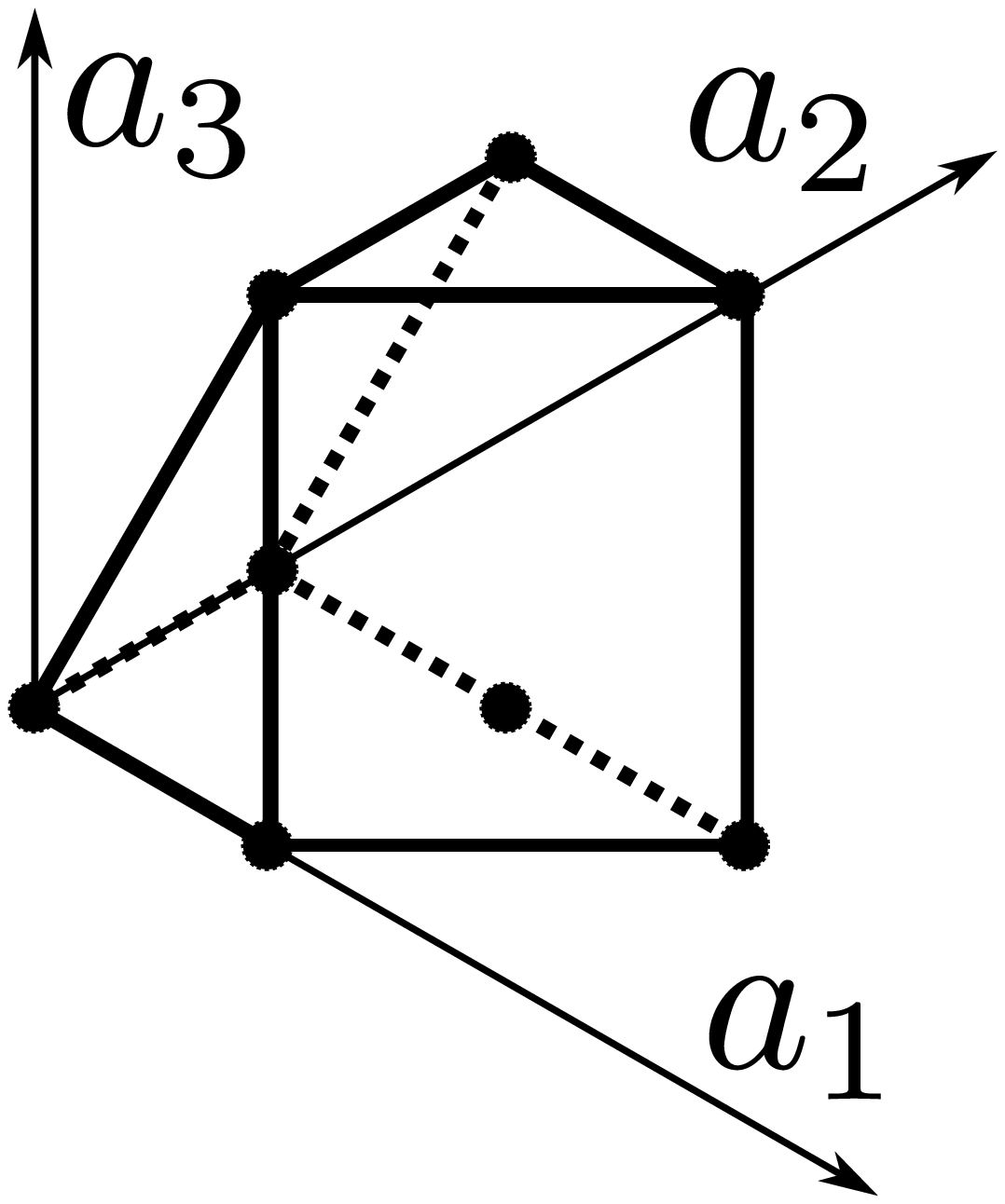}
\end{center}
\end{ex}\vspace{2mm}

\begin{ex}\normalfont
Let $G = Sp_4(\c)$ (of type $C_2$), i.e., $\langle \alpha_2, h_1 \rangle = -2$ and $\langle \alpha_1, h_2 \rangle = -1$. For a reduced word ${\bf i} = (1, 2, 1, 2)$ for $w_0$, we deduce from \cite[Theorem 3.1]{NZ} that the polyhedral realization $\Psi_{\bf i} (\mathcal{B}(\infty))$ is identical to the semigroup \[\{(a_1, a_2, a_3, a_4) \in \z_{\ge 0} ^4 \mid 2a_2 \ge a_3,\ a_3 \ge 2a_4\}.\] Now, by the same argument as in Example \ref{example1}, $\eta_{\bf i} ({\bf a})$ is given by \[(\max\{a_4, a_2 - a_3 + 2 a_4\}, \max\{a_3, a_1 - 2 a_2 + 2 a_3, a_1 + 2 a_4\}, \min\{a_2, a_3 - a_4\}, \min\{a_1, 2 a_2 - a_3, a_3 - 2 a_4\}).\] This piecewise-linear map induces a bijection from the polyhedral realization $\Delta_{\bf i} ^{(\lambda, w_0)}$ onto the string polytope $\widetilde{\Delta}_{{\bf i}^{\rm op}} ^{(\lambda, w_0)}$. Indeed, by using Corollary \ref{explicit description}, we deduce that the polyhedral realization $\Delta_{\bf i} ^{(\lambda, w_0)}$ is identical to the set of $(a_1, \ldots, a_4) \in \r^4$ satisfying the following conditions:
\begin{align*}
0 \le a_1 \le \lambda_1,\ 0 \le a_2 \le a_1 + \lambda_2,\ 0 \le a_3 \le \min\{a_2 + \lambda_2, 2 a_2\},\ 0 \le 2a_4 \le \min\{2 \lambda_2, a_3\},
\end{align*}
where $\lambda_i := \langle \lambda, h_i\rangle$ for $i = 1, 2$. In particular, if we define $\rho \in \mathfrak{t}^\ast$ by $\langle \rho, h_1\rangle = \langle \rho, h_2\rangle = 1$, then the polyhedral realization $\Delta_{\bf i} ^{(\rho, w_0)}$ is given by the inequalities:
\begin{align*}
0 \le a_1 \le 1,\ 0 \le a_2 \le a_1 +1,\ 0 \le a_3 \le \min\{a_2 +1, 2 a_2\},\ 0 \le 2a_4 \le \min\{2, a_3\}.
\end{align*}
Also, we deduce from \cite[Section 1]{Lit} or \cite[Theorem 3.10]{BZ} that the string polytope $\widetilde{\Delta}_{{\bf i}^{\rm op}} ^{(\lambda, w_0)}$ is identical to the set of $(a_1, \ldots, a_4) \in \r^4$ satisfying the following conditions: \[0 \le a_4 \le \lambda_1,\ a_4 \le a_3 \le a_4 + \lambda_2,\ a_3 \le a_2 \le 2a_3 - 2a_4 + \lambda_1,\ 0 \le a_1 \le a_2 - 2a_3 + a_4 + \lambda_2.\] In particular, if $\lambda = \rho$, then the string polytope $\widetilde{\Delta}_{{\bf i}^{\rm op}} ^{(\rho, w_0)}$ is given by the inequalities: \[0 \le a_4 \le 1,\ a_4 \le a_3 \le a_4+1,\ a_3 \le a_2 \le 2a_3 - 2a_4 +1,\ 0 \le a_1 \le a_2 - 2a_3 + a_4+1.\]
\end{ex}\vspace{3mm}

\end{document}